\numberwithin{equation}{section}
 \newtheorem{theorem}{Theorem}[section]
 \newtheorem{proposition}[theorem]{Proposition}
 \newtheorem{lemma}[theorem]{Lemma}
\theoremstyle{definition}
 \newtheorem{definition}[theorem]{Definition}
\theoremstyle{remark}
\begin{document}
\title[$d$-plane transforms]{Microlocal analysis of $d$-plane transform on the Euclidean space}
\author[H.~Chihara]{Hiroyuki Chihara}
\address{College of Education, University of the Ryukyus, Nishihara, Okinawa 903-0213, Japan}
\email{hc@trevally.net}
\thanks{Supported by the JSPS Grant-in-Aid for Scientific Research \#19K03569.}
\subjclass[2020]{Primary 58J40, Secondary 44A12, 53D22}
\keywords{Radon transform, $d$-plane transform, Fourier integral operators, wave front set}
\begin{abstract}
We study the basic properties of $d$-plane transform on the Euclidean space as a Fourier integral operator, and its application to the microlocal analysis of streaking artifacts in its filtered back-projection. The $d$-plane transform is defined by integrals of functions on the $n$-dimensional Euclidean space over all the $d$-dimensional planes, where $0<d<n$. This maps functions on the Euclidean space to those on the affine Grassmannian $G(d,n)$. This is said to be X-ray transform if $d=1$ and Radon transform if $d=n-1$. When $n=2$ the X-ray transform is thought to be measurements of CT scanners. In this paper we obtain concrete expression of the canonical relation of the $d$-plane transform and quantitative properties of the filtered back-projection of the product of the images of the $d$-plane transform. The latter one is related to the metal streaking artifacts of CT images, and some generalization of recent results of Park-Choi-Seo (2017) and Palacios-Uhlmann-Wang (2018) for the X-ray transform on the plane. 
\end{abstract}
\maketitle
\section{Introduction}
\label{section:introduction}
Let $n$ and $d$ be positive integers with $n\geqq2$ and $1\leqq{d}\leqq{n-1}$. We study the microlocal analysis of $d$-plane transform of functions on the $n$-dimensional Euclidean space, and their applications related to metal streaking artifacts of CT images. Following Helgason's celebrated textbook \cite{Helgason} we begin with the definition of the $d$-plane transform and well-known facts on it. Let $G_{d,n}$ be a Grassmannian, that is, the set of all $d$-dimensional vector subspaces of $\mathbb{R}^n$. For $\sigma \in G_{d,n}$ we denote by $\sigma^\perp$ the orthogonal complement of $\sigma$ in $\mathbb{R}^n$. Set 
$$
G(d,n)
:=
\{
x^{\prime\prime}+\sigma 
: 
\sigma \in G_{d,n}, 
x^{\prime\prime} \in \sigma^\perp
\},
$$
which is the set of all $d$-dimensional planes of $\mathbb{R}^n$ and is said to be an affine Grassmannian. We sometimes denote $x^{\prime\prime}+\sigma \in G(d,n)$ by $(\sigma,x^{\prime\prime})$. Here we introduce the $d$-plane transform $\mathcal{R}_d$. 
For a function $f(x)=\mathcal{O}(\langle{x}\rangle^{-d-\varepsilon})$ of 
$x=(x_1,\dotsc,x_n)=x^\prime+x^{\prime\prime}\in\sigma\oplus\sigma^\perp=\mathbb{R}^n$ 
with $\sigma \in G_{d,n}$, we define $\mathcal{R}_df$ by 
$$
\mathcal{R}_df(\sigma,x^{\prime\prime})
:=
\int_\sigma
f(x^\prime+x^{\prime\prime})
dx^\prime,
$$
where 
$\langle{x}\rangle=\sqrt{1+\lvert{x}\rvert^2}$, 
$\lvert{x}\rvert^2=x{\cdot}x$, 
$x{\cdot}y$ is the standard inner product of $x,y\in\mathbb{R}^n$, 
and 
$dx^\prime$ is the $d$-dimensional Lebesgue measure on $\sigma$. 
We also use similar notation $\langle\xi;\eta\rangle=\sqrt{1+\lvert\xi\rvert^2+\lvert\eta\rvert^2}$. 
$\mathcal{R}_1f$ is said to be the X-ray transform of $f$, 
and $\mathcal{R}_{n-1}f$ is said to be the Radon transform of $f$. 
\par
Next we recall inversion formula of $\mathcal{R}_d$. 
Roughly speaking, the formal adjoint of $\mathcal{R}_d$ is given as integrals of functions over the set of all $d$-planes passing through arbitrary fixed point $x\in\mathbb{R}^n$. 
More precisely 
\begin{align*}
  \mathcal{R}_d^\ast\varphi(x)
& :=
  \frac{1}{C(d,n)}
  \int_{\{\Xi \in G(d,n) : x \in \Xi\}}
  \varphi(\Xi)
  d\mu(\Xi)
\\
& =
  \frac{1}{C(d,n)}
  \int_{O(n)}
  \varphi(x+k\cdot\sigma)
  dk,
\end{align*}
where 
$\varphi \in C\bigl(G(d,n)\bigr)$, 
$C\bigl(G(d,n)\bigr)$ is the set of all continuous functions on $G(d,n)$, 
$C(d,n)=(4\pi)^{d/2}\Gamma(n/2)/\Gamma\bigl((n-1)/2\bigr)$, 
$\Gamma(\cdot)$ is the gamma function, 
$O(n)$ is the orthogonal group, 
$d\mu$ and $dk$ are the normalized measures which are invariant under rotations, 
and $\sigma \in G_{d,n}$ is arbitrary. 
The inversion formula of $\mathcal{R}_d$ is given as follows. 
\begin{proposition}[{\cite[Theorem~6.2]{Helgason}}]
\label{theorem:fbp}
For $f(x)=\mathcal{O}(\langle{x}\rangle^{-d-\varepsilon})$ 
$$
f
=
(-\Delta_x)^{d/2}\mathcal{R}_d^\ast\mathcal{R}_df
=
\mathcal{R}_d^\ast(-\Delta_{x^{\prime\prime}})^{d/2}\mathcal{R}_df, 
$$
where 
$-\Delta_x=-\partial_{x_1}^2-\dotsb-\partial_{x_n}^2$ 
and
$-\Delta_{x^{\prime\prime}}$ is the Laplacian on $\sigma^\perp$. 
\end{proposition}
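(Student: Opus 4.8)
The plan is to transfer the whole identity to the Fourier side, where $\mathcal{R}_d$, the backprojection $\mathcal{R}_d^\ast$ and the fractional powers of the Laplacian all become elementary operations, and then to collapse the rotational average defining $\mathcal{R}_d^\ast$ into a single integral over $\mathbb{R}^n$ by an invariance argument. The first ingredient is the Fourier slice theorem: fixing $\sigma\in G_{d,n}$ and writing $\widehat{g}$ for the Euclidean Fourier transform of $g$ (on $\mathbb{R}^n$ or on a subspace), with the convention $\widehat{g}(\xi)=\int e^{-ix\cdot\xi}g(x)\,dx$, the identity $x'\cdot\xi''=0$ for $x'\in\sigma$, $\xi''\in\sigma^\perp$ together with Fubini gives
\[
\widehat{\mathcal{R}_df(\sigma,\cdot)}(\xi'')
=\int_{\sigma^\perp}e^{-ix''\cdot\xi''}\int_\sigma f(x'+x'')\,dx'\,dx''
=\widehat{f}(\xi''),\qquad \xi''\in\sigma^\perp ,
\]
and the decay $f=\mathcal{O}(\langle x\rangle^{-d-\varepsilon})$ is exactly what makes the inner integral absolutely convergent. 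Reading $(-\Delta_{x''})^{d/2}$ as the Fourier multiplier with symbol $\lvert\xi''\rvert^{d}$ on $\sigma^\perp$, Fourier inversion on $\sigma^\perp$ then gives
\[
(-\Delta_{x''})^{d/2}\mathcal{R}_df(\sigma,x'')
=(2\pi)^{-(n-d)}\int_{\sigma^\perp}e^{ix''\cdot\xi''}\lvert\xi''\rvert^{d}\,\widehat{f}(\xi'')\,d\xi'' .
\]

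Next I would apply $\mathcal{R}_d^\ast$ via its description as a normalized integral over $O(n)$. Writing $P_\tau$ for the orthogonal projection onto a subspace $\tau$, the plane $x+k\cdot\sigma$ corresponds to the point $\bigl(k\cdot\sigma,\,P_{(k\cdot\sigma)^\perp}x\bigr)$ of $G(d,n)$, and $\bigl(P_{(k\cdot\sigma)^\perp}x\bigr)\cdot\xi''=x\cdot\xi''$ for $\xi''\in(k\cdot\sigma)^\perp$; hence
\[
\mathcal{R}_d^\ast\bigl[(-\Delta_{x''})^{d/2}\mathcal{R}_df\bigr](x)
=\frac{1}{C(d,n)(2\pi)^{n-d}}\int_{O(n)}\int_{(k\cdot\sigma)^\perp}e^{ix\cdot\xi''}\lvert\xi''\rvert^{d}\,\widehat{f}(\xi'')\,d\xi''\,dk .
\]
The crux is the integral-geometric identity
\[
\int_{O(n)}\Bigl(\int_{(k\cdot\sigma)^\perp}G(\xi)\,d\xi\Bigr)\,dk
=c_{n,d}\int_{\mathbb{R}^n}G(\xi)\,\lvert\xi\rvert^{-d}\,d\xi ,\qquad G\in C_c(\mathbb{R}^n),
\]
whose left-hand side is a rotation-invariant measure on $\mathbb{R}^n$ that scales by $t^{n-d}$ under $\xi\mapsto t\xi$, hence a constant multiple of $\lvert\xi\rvert^{-d}\,d\xi$; evaluating both sides on a Gaussian pins down $c_{n,d}=\omega_{n-d-1}/\omega_{n-1}$, where $\omega_{m-1}=\lvert S^{m-1}\rvert=2\pi^{m/2}/\Gamma(m/2)$.

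Applying this with $G(\xi)=e^{ix\cdot\xi}\lvert\xi\rvert^{d}\widehat{f}(\xi)$, the weight $\lvert\xi\rvert^{-d}$ cancels the filter $\lvert\xi\rvert^{d}$ and Fourier inversion on $\mathbb{R}^n$ leaves
\[
\mathcal{R}_d^\ast\bigl[(-\Delta_{x''})^{d/2}\mathcal{R}_df\bigr](x)
=\frac{c_{n,d}}{C(d,n)(2\pi)^{n-d}}\int_{\mathbb{R}^n}e^{ix\cdot\xi}\,\widehat{f}(\xi)\,d\xi
=\frac{(2\pi)^{d}c_{n,d}}{C(d,n)}\,f(x) ,
\]
and a short surface-area computation gives $(2\pi)^{d}c_{n,d}=(4\pi)^{d/2}\Gamma(n/2)/\Gamma\bigl((n-d)/2\bigr)=C(d,n)$, so the constant is $1$. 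Running the same reduction \emph{without} the filter shows that $\mathcal{R}_d^\ast\mathcal{R}_df$ is the Riesz potential $(-\Delta_x)^{-d/2}f$ (equivalently a fixed constant times $f\ast\lvert\cdot\rvert^{d-n}$), and applying $(-\Delta_x)^{d/2}$ to that gives the first equality. In effect both equalities hold for the same reason: on a slice $(k\cdot\sigma)^\perp$ the symbol $\lvert\xi''\rvert^{d}$ is just $\lvert\xi\rvert^{d}$ on $\mathbb{R}^n$, so it is immaterial whether the filter $(-\Delta)^{d/2}$ is applied to the data before, or to the backprojection after.

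I expect the real work to be rigor rather than the formal scheme. Under the bare hypothesis $f=\mathcal{O}(\langle x\rangle^{-d-\varepsilon})$ the function $\widehat{f}$ need not be smooth and $\lvert\xi\rvert^{-d}\widehat{f}(\xi)$ may be singular at the origin, so one must justify the interchange of the $O(n)$-average with Fourier inversion and fix the meaning of $(-\Delta)^{d/2}$. The cleanest route is to prove the identity first for $f\in\mathcal{S}(\mathbb{R}^n)$ (or $C_c^\infty$), where every step above is legitimate, and then to remove the restriction through the convolution form $\mathcal{R}_d^\ast\mathcal{R}_df=\mathrm{const}\cdot(f\ast\lvert\cdot\rvert^{d-n})$: the kernel $\lvert z\rvert^{d-n}$ is locally integrable and the decay of $f$ makes this convolution, and the subsequent application of $(-\Delta)^{d/2}$, absolutely convergent, so a density/limiting argument (as in \cite{Helgason}) extends the formula to all admissible $f$.
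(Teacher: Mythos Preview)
The paper does not prove this proposition at all; it is quoted as background from Helgason's textbook \cite[Theorem~6.2]{Helgason} and used without proof. So there is no ``paper's own proof'' to compare against. Your argument is essentially the standard Fourier-analytic proof found in Helgason: Fourier slice theorem, interpretation of the filter as the multiplier $\lvert\xi''\rvert^{d}$, rotational averaging to collapse the $O(n)$-integral into $\lvert\xi\rvert^{-d}\,d\xi$ on $\mathbb{R}^n$, and identification of the constant via a Gaussian test. The scheme is correct and the constant computation checks out; note in passing that your value $C(d,n)=(4\pi)^{d/2}\Gamma(n/2)/\Gamma\bigl((n-d)/2\bigr)$ agrees with Helgason, whereas the paper's displayed constant with $\Gamma\bigl((n-1)/2\bigr)$ appears to be a typo. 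Your closing caveat about rigor under the bare hypothesis $f=\mathcal{O}(\langle x\rangle^{-d-\varepsilon})$ is well taken: the clean way is exactly what you suggest, namely prove it for Schwartz (or compactly supported smooth) $f$ and then extend via the convolution form $\mathcal{R}_d^\ast\mathcal{R}_df=\text{const}\cdot f\ast\lvert\cdot\rvert^{d-n}$, which is how Helgason handles it.
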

Operators $\mathcal{R}_d^\ast$ and 
$(-\Delta_x)^{d/2}\mathcal{R}_d^\ast=\mathcal{R}_d^\ast(-\Delta_{x^{\prime\prime}})^{d/2}$ 
are said to be the unfiltered back-projection operator 
and the filtered back-projection operator respectively. 
\par
Here we recall the mapping properties of $\mathcal{R}_d$. 
To state them we introduce some function spaces. 
Let $X$ be a smooth manifold. 
We denote by $C^\infty(X)$ the set of all smooth functions on $X$. 
The set of all compactly supported smooth functions on $X$ is denoted by $C^\infty_0(X)$. 
$\mathcal{D}(X)$ is the locally convex space defined by adding the inductive limit topology to $C^\infty_0(X)$. We denote by $\mathcal{D}^\prime(X)$ the space of Schwartz distributions on $X$, which is the topological dual of $\mathcal{D}(X)$.  
We denote by $\mathcal{D}_H\bigl(G(d,n)\bigr)$ 
the set of all $\varphi \in \mathcal{D}\bigl(G(d,n)\bigr)$ 
satisfying the following moment conditions: 
for any $k=0,1,2,\dotsc$ there exists a homogeneous polynomial $P_k$ 
on $\mathbb{R}^n$ of degree $k$ such that for any $\sigma \in G_{d,n}$ 
$$
P_k(\xi^{\prime\prime})
=
\int_{\sigma^\perp}
(\xi^{\prime\prime} \cdot x^{\prime\prime})^k
\varphi(\sigma,x^{\prime\prime})
dx^{\prime\prime},
\quad
\xi^{\prime\prime} \in \sigma^\perp.
$$    
The range of $\mathcal{R}_d$ of $\mathcal{D}(\mathbb{R}^n)$ is characterized as follows. 
\begin{proposition}[{\cite[Theorem~6.3]{Helgason}}]
\label{theorem:range}
$\mathcal{R}_d$ is a linear bijection of $\mathcal{D}(\mathbb{R}^n)$ 
onto $\mathcal{D}_H\bigl(G(d,n)\bigr)$.
\end{proposition}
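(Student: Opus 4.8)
The plan is to funnel everything through the projection--slice identity. Fix $\sigma\in G_{d,n}$ and decompose $x=x^\prime+x^{\prime\prime}\in\sigma\oplus\sigma^\perp$; since $x^\prime\cdot\eta=0$ for $\eta\in\sigma^\perp$, Fubini's theorem gives, for $f\in\mathcal{D}(\mathbb{R}^n)$,
\[
\int_{\sigma^\perp}\mathcal{R}_df(\sigma,x^{\prime\prime})e^{-ix^{\prime\prime}\cdot\eta}\,dx^{\prime\prime}=\int_{\mathbb{R}^n}f(x)e^{-ix\cdot\eta}\,dx=\widehat f(\eta),\qquad \eta\in\sigma^\perp,
\]
so the $(n-d)$-dimensional Fourier transform of $\mathcal{R}_df(\sigma,\cdot)$ on $\sigma^\perp$ is $\widehat f\vert_{\sigma^\perp}$; the same manipulation with $(\xi^{\prime\prime}\cdot x^{\prime\prime})^k$ in place of the exponential gives the moment identity $\int_{\sigma^\perp}(\xi^{\prime\prime}\cdot x^{\prime\prime})^k\mathcal{R}_df(\sigma,x^{\prime\prime})\,dx^{\prime\prime}=\int_{\mathbb{R}^n}(\xi^{\prime\prime}\cdot x)^kf(x)\,dx$ for $\xi^{\prime\prime}\in\sigma^\perp$ (using $\xi^{\prime\prime}\cdot x^\prime=0$).

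First I would show $\mathcal{R}_d$ maps $\mathcal{D}(\mathbb{R}^n)$ into $\mathcal{D}_H(G(d,n))$. Compact support is geometric: if $\supp f\subset\overline{B_R}$ then $\mathcal{R}_df(\sigma,x^{\prime\prime})=0$ unless the plane $x^{\prime\prime}+\sigma$ meets $\overline{B_R}$, i.e. unless $\lvert x^{\prime\prime}\rvert\le R$. Smoothness I would obtain by parametrizing the planes near a fixed $\sigma_0\in G_{d,n}$ as graphs of linear maps $L\in\operatorname{Hom}(\sigma_0,\sigma_0^\perp)$, which turns $\mathcal{R}_df$ locally into $\int_{\sigma_0}f(v+Lv+x^{\prime\prime})\sqrt{\det(I+L^\ast L)}\,dv$, and then differentiating under the integral sign. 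The moment conditions are then read off the identity above: $\int_{\sigma^\perp}(\xi^{\prime\prime}\cdot x^{\prime\prime})^k\mathcal{R}_df(\sigma,x^{\prime\prime})\,dx^{\prime\prime}$ is the restriction to $\sigma^\perp$ of the homogeneous polynomial $P_k(\xi)=\sum_{\lvert\alpha\rvert=k}\tfrac{k!}{\alpha!}\xi^\alpha\int_{\mathbb{R}^n}x^\alpha f(x)\,dx$ on $\mathbb{R}^n$, which does not depend on $\sigma$. Injectivity is immediate from Proposition~\ref{theorem:fbp}: $\mathcal{R}_df=0$ forces $f=\mathcal{R}_d^\ast(-\Delta_{x^{\prime\prime}})^{d/2}\mathcal{R}_df=0$ (alternatively, the slice identity gives $\widehat f\vert_{\sigma^\perp}=0$ for all $\sigma$, and $\bigcup_{\sigma\in G_{d,n}}\sigma^\perp=\mathbb{R}^n$ because $d\le n-1$).

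The substance is surjectivity. Given $\varphi\in\mathcal{D}_H(G(d,n))$, with all fibres $\varphi(\sigma,\cdot)$ supported in $\{\lvert x^{\prime\prime}\rvert\le R\}$ uniformly in $\sigma$ (true since $\supp\varphi$ is compact and $G_{d,n}$ is compact), put $\Phi_\sigma(\eta)=\int_{\sigma^\perp}\varphi(\sigma,x^{\prime\prime})e^{-ix^{\prime\prime}\cdot\eta}\,dx^{\prime\prime}$; by Paley--Wiener on $\sigma^\perp\cong\mathbb{R}^{n-d}$ each $\Phi_\sigma$ extends to an entire function on $\sigma^\perp_{\mathbb{C}}:=\sigma^\perp+i\sigma^\perp$ with $\lvert\Phi_\sigma(\zeta)\rvert\le C_N(1+\lvert\zeta\rvert)^{-N}e^{R\lvert\im\zeta\rvert}$, the constants uniform in $\sigma$. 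The moment conditions now do the decisive work: along a ray $\mathbb{R}\xi$ with $\xi\in\sigma^\perp$ the Taylor coefficients of $\Phi_\sigma$ at $0$ are $(-i)^k\int_{\sigma^\perp}(\xi\cdot x^{\prime\prime})^k\varphi(\sigma,x^{\prime\prime})\,dx^{\prime\prime}=(-i)^kP_k(\xi)$, the same for every $\sigma\ni\xi$, so $\Phi_\sigma(\xi)=\sum_{k\ge0}\tfrac{(-i)^k}{k!}P_k(\xi)$ for $\xi\in\sigma^\perp$, and since every $\xi\in\mathbb{R}^n$ lies in some $\sigma^\perp$ (as $d\le n-1$), Cauchy estimates on the $\Phi_\sigma$ bound $\lvert P_k(\xi)\rvert$ by $C\,k!\,(eR\lvert\xi\rvert/k)^k$. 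Hence $g(z):=\sum_{k\ge0}\tfrac{(-i)^k}{k!}P_k(z)$ defines an entire function on $\mathbb{C}^n$ agreeing with each $\Phi_\sigma$ on $\sigma^\perp_{\mathbb{C}}$. For $1\le d\le n-2$ one has $\bigcup_\sigma\sigma^\perp_{\mathbb{C}}=\mathbb{C}^n$, so $g$ inherits $\lvert g(z)\rvert\le C_N(1+\lvert z\rvert)^{-N}e^{R\lvert\im z\rvert}$ pointwise from the $\Phi_\sigma$; in the borderline Radon case $d=n-1$ the $\sigma^\perp_{\mathbb{C}}$ are complex lines and no longer fill $\mathbb{C}^n$, and one must propagate the bound to all of $\mathbb{C}^n$ by a Phragm\'en--Lindel\"of argument (knowing $g$ restricted to each complex line in a real direction is of exponential type $R$ and rapidly decreasing on the real axis). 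Either way, the Paley--Wiener--Schwartz theorem produces $h\in C^\infty_0(\mathbb{R}^n)$ with $\supp h\subset\overline{B_R}$ and $\widehat h=g$, so $h\in\mathcal{D}(\mathbb{R}^n)$; and the slice identity applied to $h$ gives $\widehat{\mathcal{R}_dh(\sigma,\cdot)}=\widehat h\vert_{\sigma^\perp}=g\vert_{\sigma^\perp}=\Phi_\sigma=\widehat{\varphi(\sigma,\cdot)}$, whence $\mathcal{R}_dh=\varphi$ by Fourier inversion on each $\sigma^\perp$.

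The main obstacle is this surjectivity step, and within it the recovery of compact support. Producing the candidate Fourier transform is painless once one notices it must be the globally convergent power series $g=\sum(-i)^kP_k/k!$: this single observation simultaneously forces the fibrewise transforms $\Phi_\sigma$ to be restrictions of one entire function and bypasses the traditionally delicate verification that $\widehat h$ is smooth at the origin, which is exactly where the moment conditions are indispensable. The genuine work lies in the quantitative bookkeeping that guarantees the reconstructed $h$ lies in $\mathcal{D}(\mathbb{R}^n)$ and not merely in $\mathcal{S}(\mathbb{R}^n)$: the uniform-in-$\sigma$ Paley--Wiener bounds, the matching Cauchy estimates for the $P_k$, and, in the Radon case, the Phragm\'en--Lindel\"of passage from exponential type plus rapid real decay back to compact support.
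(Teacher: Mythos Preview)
The paper does not give its own proof of this proposition; it is simply quoted from Helgason's book \cite[Theorem~6.3]{Helgason} as background. Your outline is essentially the classical argument that appears there: the projection--slice identity, the observation that the moment conditions force the fibrewise Fourier transforms $\Phi_\sigma$ to be restrictions of a single entire function $g=\sum_k(-i)^kP_k/k!$ on $\mathbb{C}^n$, and Paley--Wiener to recover an $h\in\mathcal{D}(\mathbb{R}^n)$ with $\widehat h=g$. Your identification of the $d=n-1$ case as needing a separate Phragm\'en--Lindel\"of step (since the complex lines $\sigma^\perp_{\mathbb{C}}$ no longer cover $\mathbb{C}^n$) is also the standard bifurcation in Helgason's treatment. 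So there is nothing to compare against in the present paper, but your sketch is the right one and matches the cited source.
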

\par
It is well-known that $\mathcal{R}_d$ is an elliptic Fourier integral operator. 
See \cite{GuilleminSternberg}. 
Moreover the canonical relation of $\mathcal{R}_{n-1}$ 
is well-known and applied to the microlocal analysis of CT images for $n=2$. 
See 
\cite{Chihara}, 
\cite{KrishnanQuinto}, 
\cite{PalaciosUhlmannWang}, 
\cite{ParkChoiSeo}, 
\cite{Quinto1} 
and 
\cite{Quinto2}.  
As far as the author knows the canonical relation of $\mathcal{R}_d$ for $d{\ne}n-1$ was rarely considered so far. It is important to obtain a comprehensive and useful expression of them. Indeed the canonical relation of the X-ray transform $\mathcal{R}_1$ on $\mathbb{R}^3$ is expected to play a crucial role in the microlocal analysis of the CT scanners of the fourth generation or later ones. See, e.g., \cite{Epstein} for the history of actual CT scanners including the fourth generation until 2002. 
\par
Following Epstein's textbook \cite{Epstein} on mathematical theory of medical imaging and a recent paper \cite{ParkChoiSeo} of Park, Choi and Seo, we review the outline of computed tomography (CT). In what follows we assume for a while that $n=2$ and $d=1(=n-1)$. Throughout the present paper the X-ray beam is supposed to have no width. In medical imaging $f(x)$ and $\mathcal{R}_1f$ are thought to be the distribution of attenuation coefficients of the section of a human body and the measurements $P(\sigma,x^{\prime\prime})$ of a CT scanner for this section. If a human body consists only of normal tissue, then it follows from the Beer-Lambert law that $P=\mathcal{R}_1f$ holds, and $f$ is reconstructed by the filtered back-projection. However there are some factors causing artifacts in actual CT images: beam width, partial volume effect, beam hardening, noise in measurements, numerical errors and etc. We focus on metal streaking artifacts caused by the beam hardening. If a human body contains metal such as implants, dental fillings, stents, metal bones and etc, streaking artifacts occur on lines tangent to two or more boundaries of metal region in its CT images. This is basically due to the fluctuation of the energy level $E \in [0,\infty)$ of X-ray beams for normal tissue and metal regions. We explain this more concretely. In what follows we denote the distribution of attenuation coefficients by $f_E(x)$ with $E \in [0,\infty)$. The measurements of CT scanners are described by a spectral function $\rho(E)$, which is a probability density function on $[0,\infty)$:
$$
P
=
-
\log
\left\{
\int_0^\infty
\rho(E)
\exp(-\mathcal{R}_1f_E)
dE
\right\}.
$$
If a human body consists only of normal tissue, we may assume that the X-ray beams are monochromatic with a fixed energy $E_0>0$, that is, $f_E$ is independent of $E$ and $f_E=f_{E_0}$. In this case the measurements become
$$
P
=
-
\log
\left\{
\exp(-\mathcal{R}_1f_{E_0})
\cdot
\int_0^\infty
\rho(E)
dE
\right\}
=
\mathcal{R}_1f_{E_0}.
$$
The beam hardening gives nonlinear effects to the measurements of CT scanners and cause the metal streaking artifacts in CT images. 
\par
Recently the metal streaking artifacts in CT images were studied by using microlocal analysis. To explain this we introduce some setting and notation. Let $D$ be the metal region in $\mathbb{R}^2$. We remark that the singular support of the characteristic function of $D$ is its boundary $\partial{D}$ and the singular directions are the normal directions on $\partial{D}$. Consider the case that the spectral function is a normalized characteristic function of an interval $[E_0-\varepsilon,E_0+\varepsilon]$, that is, 
$$
\rho(E)
=
\frac{1}{2\varepsilon}
\chi_{[E_0-\varepsilon,E_0+\varepsilon]}(E). 
$$
Suppose that $f_E(x)$ is a sum of the normal tissue part $f_{E_0}(x)$ and the metal part of the form 
$$
f_E(x)
=
f_{E_0}(x)
+
\alpha(E-E_0)\chi_D(x)
$$
with some small parameter $\alpha>0$. 
If $\alpha\varepsilon>0$ is small enough, the measurements become
\begin{align*}
  P
& =
  \mathcal{R}_1f_{E_0}
  -
  \log\left\{\frac{\sinh(\alpha\varepsilon\mathcal{R}_1\chi_D)}{\alpha\varepsilon\mathcal{R}_1\chi_D}\right\}
\\
& =
  \mathcal{R}_1f_{E_0}
  +
  \sum_{l=1}^\infty
  A_l(\alpha\varepsilon\mathcal{R}_1\chi_D)^{2l}
\end{align*}
with some sequence of real numbers $A_1,A_2,A_3,\dotsc$. 
In their pioneering work \cite{ParkChoiSeo} Park, Choi and Seo first studied the microlocal analysis of this phenomenon. Roughly speaking they proved that if there are two points on $\partial{D}$ which are connected by a common tangential line, then the streaking artifacts appear on the tangential line in the filtered back-projection of $P$. Note that in this case the singular directions of the two points are the same. They also described the streaking artifacts as the wave front set of the filtered back-projection of $P$. Their results show the qualitative properties of the metal streaking artifacts. Moreover in \cite{PalaciosUhlmannWang} Palacios, Uhlmann and Wang developed the results of \cite{ParkChoiSeo}. By using the notion of paired Lagrangian distributions introduced in \cite{GreenleafUhlmann}, they proved that the metal streaking artifacts are conormal distributions supported on all the common tangential lines. These results are the quantitative properties of the metal streaking artifacts. These results are concerned with the case that $D$ is a disjoint union of finite number of convex domains. More recently, Wang and Zou in \cite{WangZou} studied the case that $D$ is a non-convex planar domain. They proved that streaking artifacts are contained not only in common tangential lines but also in tangential lines with only one inflection point. The latter case is essentially due to the non-convexity of $D$.    
\par
Here we show Figure~1 illustrating a characteristic function $f$ of two disks on the plane, its X-ray transform $\mathcal{R}_1f$, and the filtered back-projections of $\mathcal{R}_1f$ and $(\mathcal{R}_1f)^2$. We see the streaking artifacts on the four lines tangent to both two disks in the filtered back-projection of $(\mathcal{R}_1f)^2$. 
\begin{center}
\includegraphics[height=65mm]{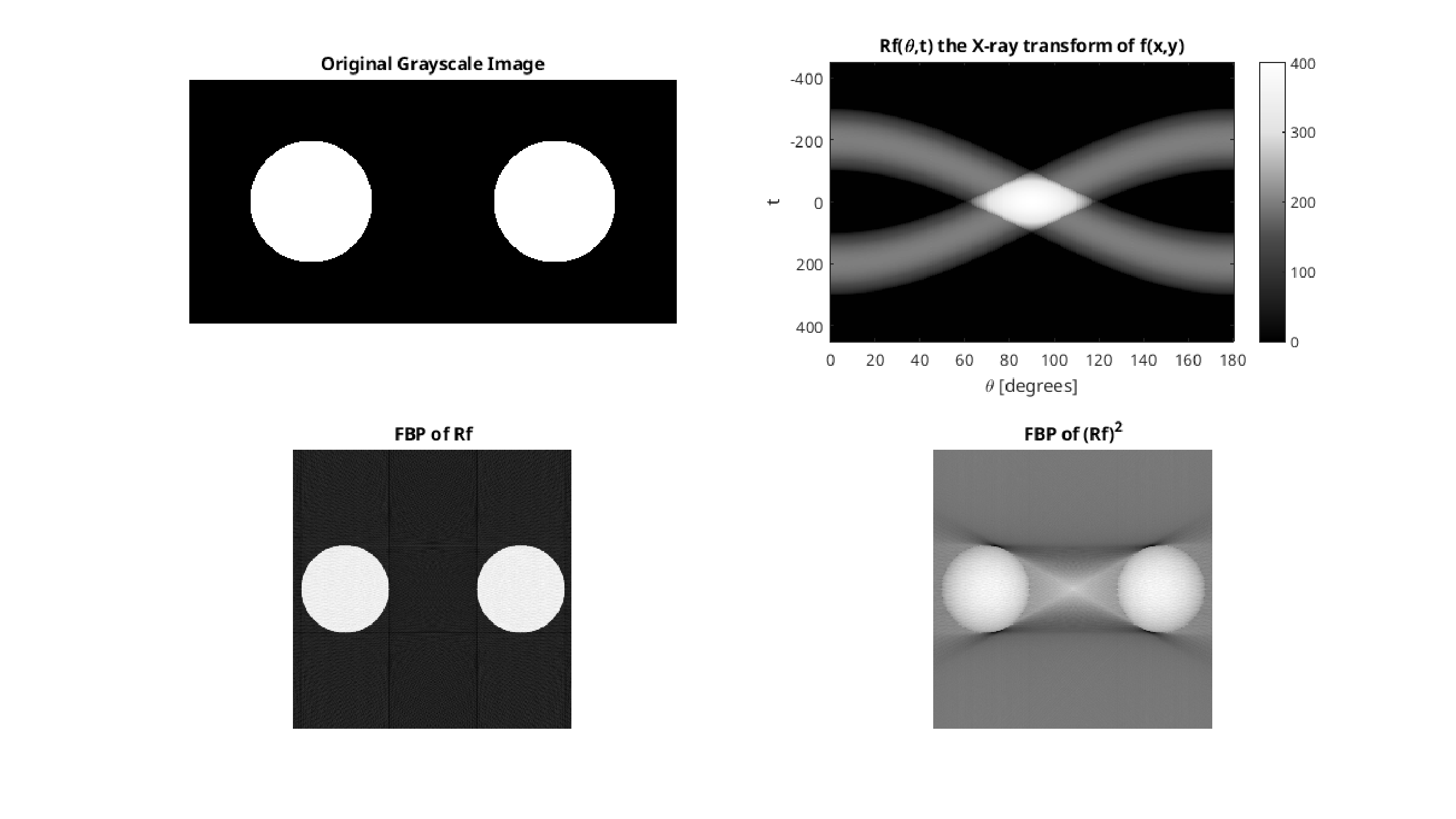} 
\\
Figure~1.
\quad 
\end{center}
\par
In the present paper we shall obtain the canonical relations for all the $d$-plane transforms on $\mathbb{R}^n$ for general $n=2,3,4,\dotsc$ and $0<d<n$. Our expression of them is useful to understand the relationship between some conormal bundles in the cotangent bundles of $\mathbb{R}^n$ and $G(d,n)$. Based on the canonical relation we shall also study the metal streaking artifacts related to them. The organization of the present paper is as follows. In Section~\ref{section:micro} we quickly review microlocal analysis used in the present paper. In Section~~\ref{section:dplane} we obtain a concrete expression of the cotangent bundle of $G(d,n)$, and compute the canonical relation of $\mathcal{R}_d$. In Section~\ref{section:metalregion} we study the geometry of the conormal bundle of $\partial{D}$ and its canonical transformation associated to $\mathcal{R}_d$ under the assumption that $D$ is a disjoint union of finite number of strictly convex bounded domains with smooth boundaries. In Section~\ref{section:lagrangian}, following \cite{GreenleafUhlmann} we introduce paired Lagrangian distributions and study the product $\mathcal{R}_d\chi_D\cdot\mathcal{R}_d\chi_D$. Finally in Section~\ref{section:artifacts} we prove that the metal streaking artifacts are conormal distributions whose singular supports are the union of hypersurfaces which are cone surfaces or cylinder surfaces and contact boundaries of more than two metal regions. In other words, our main theorem says that if two metal regions have a common tangential hyperplane, then the normal direction of the tangent points $P$ and $Q$ is the only one singular direction at $P$ and $Q$, and this microlocal singularity propagates along the line connecting $P$ and $Q$. We believe that our results are natural generalization of recent results of \cite{ParkChoiSeo} and \cite{PalaciosUhlmannWang} for the X-ray transform on the two-dimensional plane. The case that $D$ is a disjoint union of finite numbers of convex polygons in $\mathbb{R}^2$ was also studied in \cite{ParkChoiSeo} and \cite{PalaciosUhlmannWang}. It is possible to study the case that $D$ is a disjoint union of finite numbers of convex polyhedra in $\mathbb{R}^3$. However we omit this since we need to take many kinds of relations such as vertex-vertex, vertex-edge, vertex-polygon, edge-edge, edge-polygon and polygon-polygon into account.  
\par
Our method of analysis of the streaking artifacts is based on the idea of \cite{PalaciosUhlmannWang}. Throughout the present paper we make use of microlocal analysis freely. See H\"ormander's four volumes of celebrated textbooks on microlocal analysis \cite{Hoermander1}, \cite{Hoermander2}, \cite{Hoermander3} and \cite{Hoermander4}, and textbooks on Fourier integral operators \cite{Duistermaat} and \cite{GrigisSjoestrand}.
\section{Quick review of microlocal analysis}
\label{section:micro} 
We shall briefly review some basic notions of microlocal analysis used later. 
We begin with the definition of the Fourier transform in $\mathbb{R}^N$ with $N=1,2,3,\dotsc$. 
For a rapidly decreasing function $f(x)$ on $\mathbb{R}^N$, its Fourier transform is defined by 
$$
\hat{f}(\xi)
=
\mathcal{F}f(\xi)
:=
\int_{\mathbb{R}^N}
e^{-ix\cdot\xi}
f(x)
dx, 
\quad
\xi\in\mathbb{R}^N, 
$$
and the inverse Fourier transform of a rapidly decreasing function $g(\xi)$ on $\mathbb{R}^N$ is defined by 
$$
\mathcal{F}^\ast{g}(x)
:=
(2\pi)^{-N}\mathcal{F}g(-x)
=
\frac{1}{(2\pi)^N}
\int_{\mathbb{R}^N}
e^{ix\cdot\xi}
g(\xi)
d\xi, 
\quad
x\in\mathbb{R}^N, 
$$
Here we introduce wave front sets of distributions. 
\begin{definition}[Wave front set]
\label{theorem:wavefrontset}
Let $X$ be a smooth manifold. $T^\ast{X}$ denotes the cotangent bundle of $X$. 
For $u\in\mathscr{D}^\prime(X)$ and $(x,\xi) \in T^\ast{X}\setminus0$,  
we say that $(x,\xi) \not\in \operatorname{WF}(u)$ 
if there exists $\phi(y) \in C^\infty_0(X)$ with $\phi(x)\ne0$ and a conic neighborhood of $V$ at $\eta=\xi$ such that for any $M>0$
$$
\widehat{\phi{u}}(\eta)
=
\mathcal{O}(\langle\eta\rangle^{-M})
\quad
\text{for}
\quad
\eta\in{V}. 
$$
We denote this by 
$\widehat{\phi{u}}(\xi)=\mathcal{O}(\langle\xi\rangle^{-\infty})$. 
$\operatorname{WF}(u)$ is said to be the wave front set of $u$. 
\end{definition}
Equivalently, if we set 
$$
\Sigma_x(u)
:=
\bigcap_{\substack{\phi(y) \in C^\infty_0(X) \\ \phi(x)\ne0}}
\{
\eta{\in}T_x^\ast{X}\setminus0 
: 
\widehat{\phi{u}}(\eta)
\ne
\mathcal{O}(\langle\eta\rangle^{-\infty})
\},
$$
then we have 
$$
\operatorname{WF}(u)
=
\{
(x,\xi){\in}T^\ast{X}\setminus0 
: 
\xi\in\Sigma_x(u)
\}.
$$
Consider a characteristic function $g(x,y)$ of a half plane in $\mathbb{R}^2$: 
$g(x,y)=1$ if $x>0$ and $g(x,y)=0$ otherwise. 
For $\phi(x,y) \in C^\infty_0(\mathbb{R}^2)$ we deduce that 
\begin{itemize}
\item 
If $\phi(0,y)=0$, then 
$\widehat{\phi{g}}(\xi,\eta)=\hat{\phi}(\xi,\eta)$ 
or 
$\widehat{\phi{g}}(\xi,\eta)=0$. We have 
$\widehat{\phi{g}}(\xi,\eta)=\mathcal{O}(\langle\xi;\eta\rangle^{-\infty})$, 
where 
$\langle\xi;\eta\rangle=\sqrt{1+\lvert\xi\rvert^2+\lvert\eta\rvert^2}$. 
\item 
If $\phi(0,y)\ne0$, then 
$$
\widehat{\phi{g}}(\xi,\eta)
=
\frac{\hat{\phi}(\xi,\eta)}{2}
+
\frac{1}{2i}
\operatorname{VP}
\int_{\mathbb{R}}
\frac{\hat{\phi}(\zeta,\eta)}{\xi-\zeta}
d\zeta
=
\mathcal{O}(\langle\eta\rangle^{-\infty}).
$$
\end{itemize}
Hence we have $\operatorname{WF}(g)=\{(0,y,\xi,0)\ \vert \ y,\xi\in\mathbb{R}, \xi\ne0\}$. $g(x,y)$ is a typical example of conormal distributions. Using this notion we can describe and compute microlocal singularities quantitatively. Conormal distributions and their generalization called Lagrangian distributions play crucial role in the present paper. Following H\"ormander's textbook \cite[Section~18.2]{Hoermander3} we introduce basic facts on conormal distributions. To state the definition of conormal distributions we introduce function spaces on $\mathbb{R}^N$. Split $\mathbb{R}^N$ into a disjoint family of a disk $X_0$ and annuli $X_1,X_2,X_3,\dotsc$ defined by 
$$
X_0:=\{\lvert\xi\rvert<1\}, 
\qquad
X_j:=\{2^{j-1}\leqq\lvert\xi\rvert<2^j\}, 
\quad
(j=1,2,3,\dotsc).
$$
Let $s$ be a real number. We say that a tempered distribution $u$ on $\mathbb{R}^N$ belongs to 
${}^\infty{H}_{(s)}(\mathbb{R}^N)$ if
$$
\lVert{u}\rVert_{{}^\infty H_{(s)}(\mathbb{R}^N)}
:=
\sup_{j=0,1,2,\dotsc}
\left(
\int_{X_j}
\langle\xi\rangle^{2s}
\lvert\hat{u}(\xi)\rvert^2
d\xi
\right)^{1/2}<\infty. 
$$
Using smooth cut-off functions, we can define the local space 
${}^\infty{H}_{(s)}^\text{loc}(X)$ 
of distributions on an $N$-dimensional smooth manifold $X$. 
Here we define conormal distributions. 
\begin{definition}[Conormal distribution]
Let $X$ be an $N$-dimensional smooth manifold, and let $Y$ be a closed submanifold of X. 
$u \in \mathscr{D}^\prime(X)$ is said to be conormal with respect to $Y$ of degree $m$ if 
$$
L_1{\dotsb}L_Mu 
\in 
{^\infty}H^\text{loc}_{(-m-N/4)}(X)
$$
for all $M=0,1,2,\dotsc$ and all vector fields $L_1,\dotsc,L_M$ tangential to $Y$. 
Denote by $I^m(X,N^\ast{Y})$ or $I^m(N^\ast{Y})$, 
the set of all distributions on $X$ which are conormal with respect to $Y$ of degree $m$. 
Here $N^\ast{Y}$ is the conormal bundle of $Y$, which is a conic Lagrangian submanifold of $T^\ast{X}$. 
\end{definition}
Roughly speaking, 
$I^m(X,N^\ast{Y})$ is the set of all distributions $u$ on $X$ such that 
$\operatorname{singsupp}u \subset Y$ and $u$ has some upper bound of the order of singularities whose directions are perpendicular to $Y$. Conormal distributions can be characterized by oscillatory integrals which are similar to the distribution kernels of pseudodifferential operators. Here we introduce symbol classes. 
\begin{definition}
\label{theorem:symbol} 
For $m\in\mathbb{R}$ we denote by $S^m(\mathbb{R}^N\times\mathbb{R}^k)$ the set of all smooth functions $a(x,\xi^\prime)$ on 
$\mathbb{R}^N\times\mathbb{R}^k$ such that for any compact set $K$ in $\mathbb{R}^N$ and for any multi-indices 
$\alpha$ and $\beta$ 
$$
\lvert\partial_x^\beta\partial_{\xi^\prime}^\alpha a(x,\xi^\prime)\rvert
\leqq
C_{K,\alpha,\beta}\langle\xi^\prime\rangle^{m-\lvert\alpha\rvert},
\quad
(x,\xi^\prime) \in K\times\mathbb{R}^k
$$
with some positive constant $C_{K,\alpha,\beta}$. 
\end{definition}
Here we state the characterization of conormal distributions. 
\begin{proposition}[{\cite[Theorem~18.2.8]{Hoermander3}}]
\label{theorem:characterizeconormal}
Let $X$ be an $N$-dimensional smooth manifold, 
and let $Y$ be a closed submanifold of $X$ with $\operatorname{codim}Y=k$. 
Fix arbitrary $y \in Y$ and choose local coordinates 
$x=(x^\prime,x^{\prime\prime})\in\mathbb{R}^k\times\mathbb{R}^{N-k}$ 
such that $x(y)=0$ and $Y$ is represented as $\{x^\prime=0\}$ locally. 
For $u \in I^{m+k/2-N/4}(N^\ast{Y})$ there exists an amplitude 
$a(x^{\prime\prime},\xi^\prime) \in S^m(\mathbb{R}^{N-k}\times\mathbb{R}^k)$ such that 
\begin{equation}
u(x)
=
\int_{\mathbb{R}^k}
e^{ix^\prime\cdot\xi^\prime}
a(x^{\prime\prime},\xi^\prime)
d\xi^\prime
\quad
\text{near}
\quad
x=0. 
\label{equation:oscillation1}
\end{equation}
Conversely every $u$ of this form is in $I^{m+k/2-N/4}(N^\ast{Y})$. 
\end{proposition}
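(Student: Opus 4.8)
The assertion is local near $y$, so I would first pass to the coordinate chart: replace $X$ by a neighbourhood of $0$ in $\mathbb{R}^N$, $Y$ by $\{x^\prime=0\}$, and, after multiplying $u$ by a cut-off $\chi\in C_0^\infty$ equal to $1$ near $0$, assume $u$ compactly supported. Two structural facts are then available. First, the $C^\infty$-module of vector fields tangent to $Y$ is generated by $\partial_{x^{\prime\prime}_1},\dots,\partial_{x^{\prime\prime}_{N-k}}$ together with $x^\prime_i\partial_{x^\prime_j}$, $1\leqq i,j\leqq k$; so, by the definition of $I^{m+k/2-N/4}(N^\ast Y)$, membership means precisely that $L_1\cdots L_Mu\in{}^\infty H^{\text{loc}}_{(-m-k/2)}$ for all $M$ and all $L_1,\dots,L_M$ in this list. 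Second, under the partial Fourier transform $\mathcal{F}_{x^\prime\to\xi^\prime}$ these generators correspond to the order-preserving operators $\partial_{x^{\prime\prime}_j}$ and, respectively, $-\xi^\prime_j\partial_{\xi^\prime_i}-\delta_{ij}$. The partial Fourier transform $a(x^{\prime\prime},\xi^\prime):=\mathcal{F}_{x^\prime\to\xi^\prime}u$ is automatically smooth (Paley--Wiener in $x^\prime$) with compact $x^{\prime\prime}$-support and satisfies $u=(2\pi)^{-k}\int e^{ix^\prime\cdot\xi^\prime}a(x^{\prime\prime},\xi^\prime)\,d\xi^\prime$; the whole proof is about the growth of $a$ in $\xi^\prime$.

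For the converse (easier) direction, let $a\in S^m(\mathbb{R}^{N-k}\times\mathbb{R}^k)$ and $u=\int e^{ix^\prime\cdot\xi^\prime}a\,d\xi^\prime$. Integrating by parts in $\xi^\prime$ gives $\operatorname{singsupp}u\subset\{x^\prime=0\}$. The oscillatory-integral class is stable under the tangent generators: $\partial_{x^{\prime\prime}_j}$ replaces the amplitude by $\partial_{x^{\prime\prime}_j}a\in S^m$, while one integration by parts in $\xi^\prime$ replaces it, up to sign, by $\partial_{\xi^\prime_i}(\xi^\prime_j a)=\delta_{ij}a+\xi^\prime_j\partial_{\xi^\prime_i}a\in S^m$; iterating, every product of generators applied to $u$ is again of this form with amplitude of order $m$. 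It remains to verify the base estimate that any $v=\int e^{ix^\prime\cdot\xi^\prime}b\,d\xi^\prime$ with $b\in S^m$ of compact $x$-support lies in ${}^\infty H_{(-m-k/2)}(\mathbb{R}^N)$: for a cut-off $\phi$ write $\psi_{\eta^\prime}(x)=\phi(x)b(x^{\prime\prime},\eta^\prime)$, so $\widehat{\phi v}(\xi)=\int\widehat{\psi_{\eta^\prime}}(\xi^\prime-\eta^\prime,\xi^{\prime\prime})\,d\eta^\prime$, and the symbol bounds give $\lvert\widehat{\psi_{\eta^\prime}}(\zeta)\rvert\leqq C_L\langle\eta^\prime\rangle^m\langle\zeta\rangle^{-L}$ for every $L$; a standard convolution (Peetre) estimate then bounds $\int_{X_j}\langle\xi\rangle^{-2m-k}\lvert\widehat{\phi v}(\xi)\rvert^2\,d\xi$ uniformly in $j$. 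Combining stability with this base estimate proves $u\in I^{m+k/2-N/4}(N^\ast Y)$.

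For the forward direction, let $u\in I^{m+k/2-N/4}(N^\ast Y)$ be compactly supported; I must show the amplitude $a$ above lies in $S^m$. Since every $\partial_{x^{\prime\prime}_j}$ is tangent to $Y$, we have $\partial_{x^{\prime\prime}}^\beta u\in{}^\infty H^{\text{loc}}_{(-m-k/2)}$ for all $\beta$; since every $x^\prime_i\partial_{x^\prime_j}$ is tangent to $Y$, arbitrary products of the translated operators $\xi^\prime_j\partial_{\xi^\prime_i}$ applied to $a$ produce, modulo lower-order terms of the same kind, partial Fourier transforms of elements of the same space. The plan is then a dyadic rescaling: decompose in $\xi^\prime$ into shells $\lvert\xi^\prime\rvert\sim2^j$, substitute $\xi^\prime=2^j\zeta^\prime$, and use that the operators $\zeta^\prime_j\partial_{\zeta^\prime_i}$ are scale invariant. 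All the accumulated estimates then become $L^2$-bounds, uniform in $j$, for arbitrary $\partial_{x^{\prime\prime}}^\beta$ composed with arbitrary products of the $\zeta^\prime_j\partial_{\zeta^\prime_i}$ applied to $2^{-jm}a(x^{\prime\prime},2^j\zeta^\prime)$ over the fixed compact set $\{\lvert\zeta^\prime\rvert\sim1\}\times\{x^{\prime\prime}\text{ near }0\}$. Using an angular partition of unity in $\zeta^\prime$ one expresses the ordinary derivatives $\partial_{\zeta^\prime}$ through finitely many $\zeta^\prime_j\partial_{\zeta^\prime_i}$ with smooth bounded coefficients, so these $L^2$-bounds control all derivatives in $(\zeta^\prime,x^{\prime\prime})$, and Sobolev embedding in the $N$ variables converts them to uniform pointwise bounds $\lvert\partial_{x^{\prime\prime}}^\beta\partial_{\zeta^\prime}^\alpha[2^{-jm}a(x^{\prime\prime},2^j\zeta^\prime)]\rvert\leqq C_{\alpha\beta}$; unscaling yields $\lvert\partial_{x^{\prime\prime}}^\beta\partial_{\xi^\prime}^\alpha a\rvert\leqq C_{\alpha\beta}\langle\xi^\prime\rangle^{m-\lvert\alpha\rvert}$ for $\lvert\xi^\prime\rvert\gtrsim1$, the region $\lvert\xi^\prime\rvert\lesssim1$ being immediate from the boundedness and $x^{\prime\prime}$-smoothness of $a$. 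Hence $a\in S^m$, and restoring the cut-off $\chi$ gives the stated representation near $x=0$.

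The main obstacle is exactly this rescaling/embedding step. Two points need care. First, the module of vector fields tangent to $Y$ is not free, so products of the generators spawn extra lower-order terms — the $\delta_{ij}$'s and the commutators among the $\xi^\prime_j\partial_{\xi^\prime_i}$'s — and one needs an induction on $\lvert\alpha\rvert+\lvert\beta\rvert$ showing that these error terms are again covered by the conormal hypothesis. Second, the operators $\xi^\prime_j\partial_{\xi^\prime_i}$ only give derivatives transverse to the coordinate hyperplanes $\{\xi^\prime_i=0\}$, so the passage to genuine $\partial_{\xi^\prime}$-estimates must be done angularly, and the constants must be shown to be independent of the scale $j$; this is what the scale invariance of the $\xi^\prime_j\partial_{\xi^\prime_i}$ and the compactness of the rescaled shell guarantee, and it is also where the $x^{\prime\prime}$-regularity supplied by the $\partial_{x^{\prime\prime}_j}$ has to be fitted together with the shell estimates so that a single Sobolev embedding in all $N$ variables applies. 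Everything else — the integrations by parts, the Peetre estimate, the amplitude bookkeeping — is routine. I would also note that $a$ is determined modulo $S^{-\infty}$, which is what makes the principal symbol of a conormal distribution well defined, though this is not needed for the statement as given.
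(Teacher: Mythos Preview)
The paper does not prove this proposition at all: it is quoted verbatim as \cite[Theorem~18.2.8]{Hoermander3} and used as a black box, with only the subsequent remark on how an amplitude $b(x,\xi^\prime)$ depending on all of $x$ can be reduced to one depending on $(x^{\prime\prime},\xi^\prime)$ via $e^{-iD_{x^\prime}\cdot D_{\xi^\prime}}$. So there is no ``paper's own proof'' to compare against; your proposal is an attempt to supply what the paper simply cites.

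That said, your outline is a faithful sketch of H\"ormander's own argument for Theorem~18.2.8: localize, identify the generating tangent fields $\partial_{x^{\prime\prime}_j}$ and $x^\prime_i\partial_{x^\prime_j}$, take the partial Fourier transform in $x^\prime$ so that these become $\partial_{x^{\prime\prime}_j}$ and $-\xi^\prime_j\partial_{\xi^\prime_i}-\delta_{ij}$, then use the scale invariance of the radial fields $\xi^\prime_j\partial_{\xi^\prime_i}$ together with dyadic rescaling and Sobolev embedding to pass between the ${}^\infty H_{(-m-k/2)}$ estimates and the $S^m$ bounds. The two cautionary points you flag---the inductive bookkeeping of lower-order commutator terms, and the angular partition of unity needed to recover genuine $\partial_{\xi^\prime}$-derivatives from the $\xi^\prime_j\partial_{\xi^\prime_i}$---are exactly the places where the full proof requires care, and your identification of them is correct. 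Nothing here diverges from the standard route; it is the standard route.
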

Here we remark that if there exists an amplitude 
$b(x,\xi^\prime) \in S^m(\mathbb{R}^N\times\mathbb{R}^k)$ such that 
\begin{equation}
u(x)
=
\int_{\mathbb{R}^k}
e^{ix^\prime\cdot\xi^\prime}
b(x,\xi^\prime)
d\xi^\prime
\quad
\text{near}
\quad
x=0. 
\label{equation:oscillation2}, 
\end{equation}
then this can be reduced to \eqref{equation:oscillation1}. 
Indeed, if \eqref{equation:oscillation2} holds, 
then \eqref{equation:oscillation1} holds with an amplitude given by  
$$
a(x^{\prime\prime},\xi^\prime)
:=
e^{-iD_{x^\prime} \cdot D_{\xi^\prime}}b(x,\xi^\prime)\vert_{x^\prime=0}
\sim
\sum_{j=0}^\infty
\frac{(-iD_{x^\prime} \cdot D_{\xi^\prime})^jb(x,\xi^\prime)\vert_{x^\prime=0}}{j!},  
$$
where $D_{x^\prime}=-i\partial_{x^\prime}$ and $\sim$ means the asymptotic expansion of symbols. 
See \cite[Lemma~18.2.1]{Hoermander3}. 
\par
We apply Proposition~\ref{theorem:characterizeconormal} to two elementary examples. 
The characteristic function $g(x,y)$ of the half plane in $\mathbb{R}^2$ belongs to 
$$
I^{-1}\bigl(N^\ast\{(0,y) : y\in\mathbb{R}\}\bigr)
=
I^{-1}\bigl(\mathbb{R}^2,\{(0,y,\xi,0)\} : y,\xi\in\mathbb{R}\bigr). 
$$  
Consider a pseudodifferential operator 
$$
\operatorname{Op}(a)u(x)
=
\frac{1}{(2\pi)^N}
\iint_{\mathbb{R}^N\times\mathbb{R}^N}
e^{i(x-y)\cdot\xi}
a(x,\xi)u(y)
dyd\xi, 
\quad
u \in \mathcal{D}(\mathbb{R}^N)
$$
with $a(x,\xi) \in S^m(\mathbb{R}^N\times\mathbb{R}^N)$. 
Set $\Delta=\{(x,x) : x\in\mathbb{R}^N\}$. 
This is the diagonal part of $\mathbb{R}^N\times\mathbb{R}^N$.  
The distribution kernel of $\operatorname{Op}(a)$ is given by an oscillatory integral 
$$
K(x,y)
=
\frac{1}{(2\pi)^N}
\int_{\mathbb{R}^N}
e^{i(x-y)\cdot\xi}
a(x,\xi)
d\xi
$$
and belongs to 
$$
I^m(N^\ast\Delta)
=
I^m\bigl(\mathbb{R}^N\times\mathbb{R}^N,\{(x,x,\xi,-\xi) : x,\xi\in\mathbb{R}^N\}\bigr).
$$
\par
Conormal distributions are typical examples of Lagrangian distributions, and pseudodifferential operators are typical examples of Fourier integral operators. The distribution kernels of Fourier integral operators are Lagrangian distributions. We need more general notions like them later since the $d$-plane transform is an elliptic Fourier integral operator. In what follows we shall quickly introduce Lagrangian distributions, and Fourier integral operators and their canonical relations. We begin with phase functions which are general form of $(x-y)\cdot\xi$ 
in $e^{i(x-y)\cdot\xi}$ for pseudodifferential operators. 
\begin{definition}[non-degenerate phase function]
\label{theorem:phase}
Let $X$ be an $N$-dimensional smooth manifold. 
We say that a real-valued function 
$\varphi(x,\theta) \in C^\infty\bigl(X{\times}(\mathbb{R}^k\setminus\{0\})\bigr)$ 
is a non-degenerate phase function if 
\begin{itemize}
\item 
$\varphi(x,t\theta)=t\varphi(x,\theta)$ 
for 
$(x,\theta)\in X{\times}(\mathbb{R}^k\setminus\{0\})$ 
and 
$t>0$. 
\item 
$d\varphi(x,\theta)\ne0$. 
\item 
If $\varphi^\prime_\theta(x,\theta)=0$, 
then 
$\operatorname{rank}[d\varphi^\prime_\theta(x,\theta)] \equiv k$. 
\end{itemize}
\end{definition}
In this case 
$C_\varphi=\{(x,\theta) : \varphi^\prime_\theta(x,\theta)=0\}$ 
is an $N$-dimensional submanifold of $X\times(\mathbb{R}^k\setminus\{0\})$. 
Set $\Lambda_\varphi:=\bigl\{\bigl(x,\varphi^\prime_x(x,\theta)\bigr) : \varphi^\prime_\theta(x,\theta)=0\bigr\}$. Then 
$$
C_\varphi\ni(x,\theta) \mapsto \bigl(x,\varphi^\prime_x(x,\theta)\bigr)\in\Lambda_\varphi
$$
is a diffeomorphism and $\Lambda_\varphi$ 
becomes a conic Lagrangian submanifold of $T^\ast{X}\setminus0$, that is, 
a conic submanifold with $d\xi{\wedge}dx\equiv0$ on $\Lambda_\varphi$. 
Here we introduce Lagrangian distributions.  
\begin{definition}[Lagrangian distribution]
Let $\Lambda$ be a conic Lagrangian submanifold of $T^\ast{X}\setminus0$. 
We denote by $I^m(X,\Lambda)=I^m(\Lambda)$ the set of all $u\in\mathscr{D}^\prime(X)$ satisfying 
\begin{itemize}
\item 
$\operatorname{WF}(u)\subset\Lambda$. 
\item 
For any $(x_0,\xi_0)\in\Lambda$ 
there exist a non-degenerate phase function $\varphi(x,\theta)$ 
and an amplitude $a(x,\theta) \in S^{m+N/4-k/2}(X\times\mathbb{R}^k)$ with some $k=1,2,3,\dotsc$ 
such that $\Lambda=\Lambda_\varphi$ near $(x_0,\xi_0)$ and 
$$
u(x)
=
\int e^{i\varphi(x,\theta)}a(x,\theta)d\theta
\quad
\text{microlocally near}
\quad
(x_0,\xi_0).
$$
\end{itemize}
\end{definition}
We now introduce Fourier integral operators and their properties needed later. Let $X$ and $Y$ be manifolds, and let $\Lambda$ be a conic Lagrangian submanifold of 
$T^\ast(X{\times}Y)\setminus0$. For $K(x,y) \in I^m(X{\times}Y,\Lambda)$, 
$$
Au(x):=\int_YK(x,y)u(y)dy, 
\quad
u\in\mathscr{D}(Y)
$$
defines a linear operator of $\mathscr{D}(Y)$ to $\mathscr{D}^\prime(X)$.  
$A$ is said to be a Fourier integral operator. 
Indeed $A$ is locally given by 
$$
Au(x)
:=
\iint
e^{i\varphi(x,y,\theta)}
a(x,y,\theta)u(y)
dyd\theta
$$
with a non-degenerate phase function $\varphi(x,y,\theta)$ and an amplitude $a(x,y,\theta)$. 
If the principal part of $a$ does not vanish, then $A$ is called an elliptic Fourier integral operator. 
\par
$\Lambda^\prime:=\{(x,y;\xi,\eta) : (x,y;\xi,-\eta)\in\Lambda\}$ 
is said to be the canonical relation of $A$. We have 
$\operatorname{WF}(Au) \subset \Lambda^\prime\circ\operatorname{WF}(u)$ 
and if $A$ is elliptic then 
$\operatorname{WF}(Au) = \Lambda^\prime\circ\operatorname{WF}(u)$ 
holds. Here $\circ$ denotes the composition defined as follows. 
For $W \subset T^\ast{X} \times T^\ast{Y}$ and $Z \subset T^\ast{Y}$ we set 
$$
W{\circ}Z
:=
\{(x,\xi) \in T^\ast{X} : (x,y;\xi,\eta) \in W \ \text{with some}\ (y,\eta) \in Z\}. 
$$
The canonical relation describes the correspondence between the microlocal singularities of $u$ and those of $Au$. 
In other words this correspondence is a mapping 
\begin{equation}
T^\ast{Y}
\ni
\bigl(y,-\varphi^\prime_y(x,y,\theta)\bigr)
\mapsto 
\bigl(x,\varphi^\prime_x(x,y,\theta)\bigr)
\in
T^\ast{X}. 
\label{equation:canonicaltransformation}
\end{equation}
This is called a canonical transformation. 
\par
Formally the adjoint of $A$ is given by 
$$
A^\ast{v(y)}
=
\iint
e^{-i\varphi(x,y,\theta)}
\overline{a(x,y,\theta)}
v(x)
dxd\theta. 
$$
Hence if $\Lambda^\prime$ is the canonical relation of $A$, then the phase function is $-\varphi$ and the canonical relation $(\Lambda^\prime)^\ast$ of $A^\ast$ is given by 
$$
(\Lambda^\prime)^\ast
=
\{(y,x;-\varphi_y^\prime,\varphi_x^\prime) : \varphi^\prime_\theta=0\}
=
\{(y,x;\eta,\xi) : (x,y;\xi,\eta)\in\Lambda^\prime\}. 
$$
When we consider the action of Fourier integral operators on Lagrangian distributions, 
we need to use so-called the intersection calculus of Lagrangian submanifolds. 
Here we introduce some basic notions of the intersection calculus, those are, transversal intersections and clean intersections of two submanifolds. See \cite{MelroseUhlmann} and \cite{GuilleminUhlmann} for the detail. 
\begin{definition}
\label{theorem:transversality}
Let $X$ be a smooth manifold, and let $Y$ and $Z$ be submanifolds of $X$. 
\begin{itemize}
\item 
We say that $Y$ and $Z$ intersect transversely if $N^\ast_xY{\cap}N^\ast_xZ=\{0\}$ for all $x \in Y{\cap}Z$. Note that this condition is equivalent to that $T_xY{\cup}T_xZ=T_xX$ for all $x \in Y{\cap}Z$. 
\item 
We say that $Y$ and $Z$ intersect cleanly if $Y{\cap}Z$ is smooth and $T_xY{\cap}T_xZ=T_x(Y{\cap}Z)$ for all $x \in Y{\cap}Z$. Moreover, 
$$
e:=
\operatorname{codim}(Y)+\operatorname{codim}(Z)-\operatorname{codim}(Y{\cap}Z)
$$
is said to be the excess of the intersection. 
\end{itemize}
\end{definition}
Note that transversal intersection is clean intersection with $e=0$. 
In this paper we need to consider compositions of Lagrangian submanifolds using the intersection calculus. Following \cite[Chapter~21]{Hoermander3}, we introduce the clean composition. 
\begin{definition}
\label{theorem:cleanlangangian}
Let $X$ and $Y$ be smooth manifolds, 
and let $C \subset T^\ast(X{\times}Y)$ and $C_1 \subset T^\ast{Y}$ 
be conic Lagrangian submanifolds. 
We say that $C{\circ}C_1$ is clean with excess $e$ 
if $C{\times}C_1$ intersects $T^\ast{X}\times\Delta(T^\ast{Y})$ 
cleanly with excess $e$ in $T^\ast(X{\times}Y{\times}Y)$, 
where $\Delta(T^\ast{Y})$ is the diagonal part of $T^\ast{Y}{\times}T^\ast{Y}$. 
In particular, we say that $C{\circ}C_1$ is transversal 
if $C{\circ}C_1$ is clean with $e=0$. 
\end{definition}
More concretely, if we set $\dim{X}=n_X$ and $\dim{Y}=n_Y$, 
we say that $C{\circ}C_1$ is clean with excess $e$ when   
$$
M
:=
\bigl\{
\bigl((x,\xi),(y,\eta),(y^\prime,\eta^\prime)\bigr)\in C{\times}C_1 : 
(y,\eta)=(y^\prime,\eta^\prime)
\bigr\}
$$
is a smooth manifold and $\dim{M}=n_X+e$. 
We now state the well-known results \cite[Theorem~25.2.3]{Hoermander4} for the actions of Fourier integral operators on Lagrangian distributions. 
\begin{lemma}
\label{theorem:hoermander2523} 
Let $X$ and $Y$ be smooth manifolds, 
and let $C \subset T^\ast(X{\times}Y)$ and $C_1 \subset T^\ast{Y}$ 
be conic Lagrangian submanifolds. 
Suppose that $A$ is a Fourier integral operator whose distribution kernel belongs to 
$I^m(X{\times}Y,C^\prime)$ and $u \in I^{m_1}(Y,C_1)$. 
If $C{\circ}C_1$ is clean with excess $e$, then 
$Au \in I^{m+m_1+e/2}(X,C{\circ}C_1)$.  
\end{lemma}
%
%
\section{Canonical relations of the $d$-plane transform}
\label{section:dplane} 
In this section we present the basic properties of the $d$-plane transform as a Fourier integral operator. More precisely we show that the distribution kernel of the $d$-plane transform is a Lagrangian distribution on $G(d,n)\times\mathbb{R}^n$ with some order and some conic Lagrangian submanifold of $T^\ast\bigl(G(d,n)\times\mathbb{R}^n\bigr)\setminus0$. So we begin with the expression of $T^\ast{G(d,n)}$. We should understand the structure of the mapping \eqref{equation:canonicaltransformation} for $Y=\mathbb{R}^n$ and $X=G(d,n)$ concretely and clearly so that we can make full use of microlocal analysis. This depends on the expression of $T^\ast{G(d,n)}$. Here we recall that $\operatorname{dim}G_{d,n}=d(n-d)$ and $N(d,n):=\operatorname{dim}G(d,n)=(d+1)(n-d)$. 
\begin{lemma}[The expression of $T^\ast{G(d,n)}$]
\label{theorem:cotangentbundle}
$$
T^\ast{G(d,n)}
=
\{
(\sigma,x^{\prime\prime};\eta_1,\dotsc,\eta_d,\xi) 
:   
\sigma{\in}G_{d,n}, 
\ 
x^{\prime\prime}, 
\eta_1, 
\dotsc, 
\eta_d, 
\xi 
\in
\sigma^\perp
\}.
$$
\end{lemma}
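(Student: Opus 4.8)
The plan is to build an explicit parametrization of $G(d,n)$ near an arbitrary point and then read off the cotangent bundle from the local coordinates. First I would fix a reference subspace $\sigma_0 \in G_{d,n}$ and recall the standard chart on the Grassmannian: every $\sigma$ near $\sigma_0$ is the graph of a unique linear map $A \colon \sigma_0 \to \sigma_0^\perp$, so $\sigma_0^\perp$ is identified with the tangent space $T_{\sigma_0}G_{d,n}$ via $A \mapsto$ the image of the generators. Since a point of $G(d,n)$ over such a $\sigma$ is a pair $(\sigma, x'')$ with $x'' \in \sigma^\perp$, and $\sigma^\perp$ varies smoothly with $\sigma$, the manifold $G(d,n)$ is locally diffeomorphic to an open set of $\mathbb{R}^{d(n-d)} \times \mathbb{R}^{n-d}$, consistent with $N(d,n) = (d+1)(n-d)$. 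The subtle point is that I want the cotangent fibre over $(\sigma, x'')$ to be described invariantly in terms of $\sigma^\perp$ itself, not in terms of the chart at $\sigma_0$: the claim packages the covector as $(\eta_1,\dots,\eta_d,\xi)$ with all $d+1$ vectors lying in $\sigma^\perp$.

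The key device for this is the following. At a fixed $\sigma$, choose an orthonormal basis $e_1,\dots,e_d$ of $\sigma$; then a nearby subspace is the graph of $A$, and in these coordinates $T_\sigma G_{d,n} \cong \operatorname{Hom}(\sigma,\sigma^\perp) \cong (\sigma^\perp)^d$, the $j$-th component recording $A e_j \in \sigma^\perp$. Dualizing, a cotangent vector to $G_{d,n}$ at $\sigma$ is a $d$-tuple $(\eta_1,\dots,\eta_d)$ with each $\eta_j \in \sigma^\perp$ (using the metric on $\sigma^\perp$ to identify it with its dual). The remaining $\mathbb{R}^{n-d}$-direction is the translation $x'' \in \sigma^\perp$ itself, whose cotangent vector $\xi$ again lives naturally in $\sigma^\perp$. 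Assembling the two pieces gives exactly the asserted description of $T^\ast G(d,n)$. I would carry this out by: (i) writing the local diffeomorphism $\Phi$ from $(A, x'') \in \operatorname{Hom}(\sigma,\sigma^\perp) \times \sigma^\perp$ to $G(d,n)$; (ii) computing $d\Phi$ to see the tangent space splits as $(\sigma^\perp)^d \oplus \sigma^\perp$; (iii) taking the dual and checking the identification is independent of the chosen orthonormal basis of $\sigma$ — rotations of that basis act on the $\eta_j$'s by the same orthogonal matrix, which is why the unordered-ish tuple $(\eta_1,\dots,\eta_d)$ is a well-defined element of $(\sigma^\perp)^d$ once we agree on the $O(d)$-action, and in any case the total space described is independent of all choices.

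The main obstacle I anticipate is the bookkeeping of the $O(d) \times O(n-d)$ (really the stabilizer of $\sigma$ in $O(n)$) action and making sure the coordinates $(\eta_1,\dots,\eta_d,\xi)$ glue into a genuine global description of the cotangent bundle rather than just a chart-by-chart statement; the content of the lemma is precisely that the chart-dependent pieces organize into the intrinsic data ``$d+1$ vectors in $\sigma^\perp$.'' Concretely I must verify that changing the orthonormal frame $e_1,\dots,e_d \mapsto e_1',\dots,e_d'$ by $g \in O(d)$ transforms $(\eta_1,\dots,\eta_d) \mapsto (\eta_1,\dots,\eta_d) g$ and hence the set of all such tuples over all frames is unchanged, so the right-hand side of the displayed formula is well-posed. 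Once that equivariance check is in hand, the proof is just differentiating the explicit graph parametrization and dualizing, which is routine.
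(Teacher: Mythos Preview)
Your proposal is correct and essentially aims at the same identification as the paper, but the two arguments are organized differently enough to be worth contrasting. The paper's proof is very short and somewhat heuristic: having fixed an orthonormal basis $\omega_1,\dotsc,\omega_d$ of $\sigma$, it observes that the derivative in the $\omega_j$-direction lies in $\omega_j^\perp$, then invokes the invariance of $\sigma=\langle\omega_1,\dotsc,\omega_d\rangle$ under rescaling $\omega_j\mapsto c\omega_j$ to conclude that each cotangent vector attached to $\sigma$ must lie in $\omega_1^\perp\cap\dotsb\cap\omega_d^\perp=\sigma^\perp$; a dimension count then forces the fibre to be $(\sigma^\perp)^d$. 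Your route via the graph chart $A\in\operatorname{Hom}(\sigma,\sigma^\perp)$ and the identification $T_\sigma G_{d,n}\cong\operatorname{Hom}(\sigma,\sigma^\perp)\cong(\sigma^\perp)^d$ is the standard textbook computation and is more explicit; in particular you flag and plan to verify the $O(d)$-equivariance of the tuple $(\eta_1,\dotsc,\eta_d)$ under change of orthonormal frame, a point the paper passes over in silence. What your approach buys is a cleaner justification that the description is intrinsic rather than chart-dependent; what the paper's shortcut buys is brevity, at the cost of leaving the reader to fill in why the scaling argument alone (which a priori only gives $\omega_j^\perp$ for each $j$ separately) pins down $\sigma^\perp$ for each component.
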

\begin{proof}[{\bf Proof}]
Fix arbitrary $(\sigma,x^{\prime\prime}) \in G(d,n)$. 
It is easy to see that the set of cotangent vectors corresponding to $x^{\prime\prime}$ is $\xi\in\sigma^\perp$. There exists an orthonormal system $\{\omega_1,\dotsc,\omega_d\}$ of $\mathbb{R}^n$ such that $\sigma=\langle\omega_1,\dotsc,\omega_d\rangle$. We remark that the covariant derivative of a function on $\sigma$ with respect to $\omega_j$ is a vector in $\omega_j^\perp$. Recall that the set of cotangent vectors corresponding to $\sigma$ is a $d(n-d)$-dimensional real vector space. If we replace $\omega_j$ by $c\omega_j$ with $c\ne0$, then 
$$
\langle\omega_1,\dotsc,\omega_{j-1},c\omega_j,\omega_{j+1},\dotsc,\omega_d\rangle
=
\langle\omega_1,\dotsc,\omega_d\rangle,
\quad
j=1,\dotsc,d.
$$
This means that any cotangent vector corresponding $\sigma$ must belong to $\omega_1^\perp\cap\dotsb\cap\omega_d^\perp=\sigma^\perp$. Hence the vector space of all cotangent vectors corresponding $\sigma$ can be seen as the $d$-product $\sigma^\perp\times\dotsb\times\sigma^\perp$.  
\end{proof}
We denote by $\pi_\sigma$ the orthogonal projection of $\mathbb{R}^n$ onto $\sigma \in G_{d,n}$. 
If $\sigma$ is spanned by an orthonormal system 
$\{\omega_1,\dotsc,\omega_d\}$ of $\mathbb{R}^n$, 
then $\pi_\sigma$ is given by 
$$
\pi_\sigma=\sum_{j=1}^d\omega_j\omega_j^T. 
$$
We denote this case by $\sigma=\langle\omega_1,\dotsc,\omega_d\rangle_{\operatorname{ON}}$. 
This notation depends on the choice of an orthonormal basis of $\sigma$, 
and we need to take care of it below. 
We have the quantitative properties of $\mathcal{R}_d$ as follows. 
\begin{theorem}
\label{theorem:canonicalrd}
$\mathcal{R}_d$ is an elliptic Fourier integral operator whose distribution kernel belongs to 
$$
I^{0-(N(d,n)+n)/4+(n-d)/2}\bigl(G(d,n)\times\mathbb{R}^n,\Lambda_\phi\bigr),
$$
$$
0-\frac{N(d,n)+n}{4}+\frac{n-d}{2}
=
-
\frac{d(n-d+1)}{4},
$$
\begin{align*}
  \Lambda_\phi^\prime
& =
  \bigl\{
  \bigl(
  \sigma,y-\pi_\sigma{y},y;
  \eta(y\cdot\omega_1,\dotsc,y\cdot\omega_d,1,1)
  \bigr) 
  :
\\
& \qquad\qquad 
  \sigma=\langle\omega_1,\dotsc,\omega_d\rangle_{\operatorname{ON}}{\in}G_{d,n}, 
  y\in\mathbb{R}^n, \eta\in\sigma^\perp\setminus\{0\}
  \bigr\} 
\\
& =
  \bigl\{
  \bigl(
  \sigma,y-{\pi_\sigma}y,y;
  \eta(y\cdot\omega_1,\dotsc,y\cdot\omega_d,1,1)
  \bigr)
  : 
\\
& \qquad\qquad
  (y,\eta) \in T^\ast\mathbb{R}^n\setminus0, 
  \sigma=\langle\omega_1,\dotsc,\omega_d\rangle_{\operatorname{ON}}\in G_{d,n}, 
  \sigma\subset\eta^\perp
  \bigr\}
\\
& =
  \bigl\{
  \bigl(
  \sigma,x^{\prime\prime},x^{\prime\prime}+t_1\omega_1+\dotsb+t_d\omega_d;
  \xi(t_1,\dotsc,t_d,1,1)
  \bigr)
\\
& \qquad\qquad 
  : 
  (\sigma,x^{\prime\prime}){\in}G(d,n), 
  \sigma=\langle\omega_1,\dotsc,\omega_d\rangle_{\operatorname{ON}}\in G_{d,n}, 
  t_1,\dotsc,t_d\in\mathbb{R}, 
  \xi\in\sigma^\perp\setminus\{0\}
  \bigr\}.
\end{align*} 
\end{theorem}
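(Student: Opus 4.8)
My plan is to identify the Schwartz kernel of $\mathcal{R}_d$ with a constant multiple of the delta distribution of the incidence manifold
$$
Z
:=
\bigl\{((\sigma,x^{\prime\prime}),y)\in G(d,n)\times\mathbb{R}^n : y\in x^{\prime\prime}+\sigma\bigr\}
=
\bigl\{((\sigma,x^{\prime\prime}),y) : x^{\prime\prime} = y-\pi_\sigma y\bigr\},
$$
which is a closed submanifold of $G(d,n)\times\mathbb{R}^n$ of codimension $n-d$, cut out by the $\sigma^\perp$-valued equation $x^{\prime\prime}-\pi_{\sigma^\perp}y=0$ and diffeomorphic to $G_{d,n}\times\mathbb{R}^n$. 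Since conormal distributions are Lagrangian distributions, once this is done $\mathcal{R}_d$ is a Fourier integral operator with $\Lambda_\phi=N^\ast Z\setminus 0$, and the proof splits into two tasks: (i) establishing ellipticity and pinning down the order of the kernel; and (ii) writing $N^\ast Z$, hence $\Lambda_\phi^\prime=(N^\ast Z\setminus 0)^\prime$, explicitly in the coordinates on $T^\ast G(d,n)$ furnished by Lemma~\ref{theorem:cotangentbundle}.

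For (i) I would begin from $K((\sigma,x^{\prime\prime}),y)=\int_\sigma\delta_{\mathbb{R}^n}(y-x^{\prime\prime}-x^\prime)\,dx^\prime$ and split $\mathbb{R}^n=\sigma\oplus\sigma^\perp$: the $x^\prime$-integral collapses the $\sigma$-component of the delta with unit multiplicity and leaves $K=\delta_{\sigma^\perp}(x^{\prime\prime}-\pi_{\sigma^\perp}y)$, so Fourier inversion on $\sigma^\perp$ gives
$$
K((\sigma,x^{\prime\prime}),y)
=
\frac{1}{(2\pi)^{n-d}}
\int_{\sigma^\perp}
e^{i(x^{\prime\prime}-y)\cdot\xi^{\prime\prime}}
\,d\xi^{\prime\prime},
$$
with phase $\phi((\sigma,x^{\prime\prime}),y,\xi^{\prime\prime})=(x^{\prime\prime}-y)\cdot\xi^{\prime\prime}$, $\xi^{\prime\prime}\in\sigma^\perp\setminus\{0\}$, and constant amplitude $(2\pi)^{-(n-d)}\in S^0$. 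Choosing local coordinates on $G(d,n)$ adapted to $Z$, so that the components of $x^{\prime\prime}-\pi_{\sigma^\perp}y$ provide the first $n-d$ coordinates and $Z=\{x^\prime=0\}$, turns $\phi$ into a genuine non-degenerate phase function to which Proposition~\ref{theorem:characterizeconormal} applies with $k=n-d$, giving $K\in I^{0+k/2-N/4}(N^\ast Z)$ with $N=N(d,n)+n$; the non-vanishing of the constant amplitude is precisely ellipticity. The identity $k/2-N/4=(n-d)/2-(N(d,n)+n)/4=-d(n-d+1)/4$ is then a one-line computation.

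For (ii) I would regard $Z$ as the zero set of the $\sigma^\perp$-valued function $F(\sigma,x^{\prime\prime},y)=x^{\prime\prime}-\pi_{\sigma^\perp}y$, so that over a point of $Z$ the conormal covectors are $\eta\cdot dF$ with $\eta\in\sigma^\perp$. Its $x^{\prime\prime}$-component is $\eta$; its $\mathbb{R}^n$-component is $-\pi_{\sigma^\perp}\eta=-\eta$ because $\eta\in\sigma^\perp$; and, writing $\sigma=\langle\omega_1,\dots,\omega_d\rangle_{\operatorname{ON}}$ and using $\pi_\sigma+\pi_{\sigma^\perp}=\mathrm{id}$ together with $\pi_\sigma=\sum_{j=1}^d\omega_j\omega_j^T$, its $\sigma$-component, paired against a tangent vector $V$ of $G_{d,n}$ at $\sigma$ (that is, a linear map $\sigma\to\sigma^\perp$), equals $\sum_{j=1}^d(y\cdot\omega_j)(\eta\cdot V\omega_j)$, the term containing $\eta\cdot\omega_j$ dropping out since $\eta\in\sigma^\perp$. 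Under the identification of $T^\ast_\sigma G_{d,n}$ with $(\sigma^\perp)^d$ from Lemma~\ref{theorem:cotangentbundle} this is exactly $(\eta_1,\dots,\eta_d)=\bigl((y\cdot\omega_1)\eta,\dots,(y\cdot\omega_d)\eta\bigr)$. Since on $Z$ one has $x^{\prime\prime}=y-\pi_\sigma y$, and since passing from $\Lambda_\phi$ to $\Lambda_\phi^\prime$ flips the sign of the $\mathbb{R}^n$-covector back to $+\eta$, this yields the first displayed form of $\Lambda_\phi^\prime$. The second form is just the observation that $\eta\in\sigma^\perp\setminus\{0\}$ is equivalent to $(y,\eta)\in T^\ast\mathbb{R}^n\setminus0$ together with $\sigma\subset\eta^\perp$, and the third follows by writing a point $y$ of the plane $x^{\prime\prime}+\sigma$ as $y=x^{\prime\prime}+t_1\omega_1+\dots+t_d\omega_d$, so that $y\cdot\omega_j=t_j$ and $\eta$ is renamed $\xi$.

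I expect the main obstacle to be the $\sigma$-derivative in (ii): one has to relate $\partial_\sigma\pi_\sigma$ carefully to the concrete, frame-dependent coordinates on $T^\ast G_{d,n}$ of Lemma~\ref{theorem:cotangentbundle}, and check that the rotational ambiguity both in the choice of orthonormal basis of $\sigma$ and in the extension to an orthonormal basis of $\sigma^\perp$ used to trivialize things locally does not disturb the final frame-indexed formula. A subsidiary point worth verifying is that $\int_\sigma\delta_{\mathbb{R}^n}(y-x^{\prime\prime}-x^\prime)\,dx^\prime$ really produces $\delta_Z$ with unit multiplicity; this holds precisely because $dx^\prime$ is the Lebesgue measure associated with an orthonormal basis of $\sigma$, so no Jacobian factor appears. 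Building the canonical relation through an explicit global phase function instead would require parametrizing the fibre variable by a moving frame and tracking its derivatives, which is why I prefer the conormal-bundle route.
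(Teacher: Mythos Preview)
Your proposal is correct and reaches the same result as the paper, but the organizing principle is a bit different. The paper writes $\mathcal{R}_df$ via the Fourier slice theorem as an oscillatory integral with phase $\phi(\sigma,x^{\prime\prime},y,\xi)=(x^{\prime\prime}-y)\cdot\xi$, then \emph{verifies directly} that $\phi$ is a non-degenerate phase function (computing $\phi^\prime_\xi$, checking $\operatorname{rank}\phi^{\prime\prime}_{\xi y}=n-d$, and $\phi^\prime_y=-\xi\ne0$) and obtains $\Lambda_\phi$ by evaluating $\phi^\prime_{x^{\prime\prime}}$, $\phi^\prime_y$, and $\phi^\prime_{\omega_j}$ at the critical set, the last via an explicit perturbation $\mu_j$ of the orthonormal frame. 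You instead recognize the kernel as $\delta_Z$ for the incidence manifold $Z$ and compute $N^\ast Z$ from the differential of the defining map $F=x^{\prime\prime}-\pi_{\sigma^\perp}y$, pairing with abstract tangent vectors $V:\sigma\to\sigma^\perp$ of the Grassmannian. The two computations are literally the same once unpacked---your $\eta\cdot dF$ in the $\sigma$-slot is the paper's $\phi^\prime_{\omega_j}=(y\cdot\omega_j)\xi$, and your appeal to Proposition~\ref{theorem:characterizeconormal} replaces the paper's non-degeneracy check---but your packaging is slightly more geometric and sidesteps the auxiliary functions $\psi_1,\psi_2,\psi_3$ the paper introduces to differentiate in constrained variables. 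What the paper's route buys is that it stays entirely inside the standard FIO formalism of Definition~\ref{theorem:phase}, so the order formula $m+N/4-k/2$ for Lagrangian distributions applies without further justification; what your route buys is that the Lagrangian is manifestly a conormal bundle from the start, which meshes cleanly with how $\Lambda_\phi^\prime$ is used later in Lemma~\ref{theorem:lemma1}.
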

\begin{proof}[{\bf Proof}]
We begin with the generalized Fourier slice theorem. For any $\sigma{\in}G_{d,n}$ and $\xi\in\sigma^\perp$, we have $x^\prime\cdot\xi=0$ for $x^\prime\in\sigma$ and  
\begin{align*}
  \int_{\sigma^\perp}
  e^{-ix^{\prime\prime}\cdot\xi}
  \mathcal{R}_df(\sigma,x^{\prime\prime})
  dx^{\prime\prime}
& =
  \int_{\sigma^\perp}
  \int_\sigma
  e^{-ix^{\prime\prime}\cdot\xi}
  f(x^\prime+x^{\prime\prime})
  dx^\prime
  dx^{\prime\prime} 
\\
& =
  \int_{\sigma^\perp}
  \int_\sigma
  e^{-i(x^\prime+x^{\prime\prime})\cdot\xi}
  f(x^\prime+x^{\prime\prime})
  dx^\prime
  dx^{\prime\prime}
\\
& =
  \int_{\mathbb{R}^n}
  e^{-ix\cdot\xi}
  f(x)
  dx
  =
  \hat{f}(\xi),
  \quad
  \xi\in\sigma^\perp.  
\end{align*}
Then for any $(\sigma,x^{\prime\prime}){\in}G(d,n)$, the Fourier inversion formula on $\sigma^\perp$ implies that 
\begin{align*}
  \mathcal{R}_df(\sigma,x^{\prime\prime})
& =
  \frac{1}{(2\pi)^{n-d}}
  \int_{\sigma^\perp}
  e^{ix^{\prime\prime}\cdot\xi}
  \hat{f}(\xi)
  d\xi
\\
& =
  \frac{1}{(2\pi)^{n-d}}
  \int_{\sigma^\perp}
  \int_{\mathbb{R}^n}
  e^{i(x^{\prime\prime}-y)\cdot\xi}
  f(y)
  dy
  d\xi
\\
& =
  \frac{1}{(2\pi)^{n-d}}
  \int_{\sigma^\perp}
  \int_{\mathbb{R}^n}
  e^{i\phi(\sigma,x^{\prime\prime},y,\xi)}
  f(y)
  dy
  d\xi,
\\
  \phi(\sigma,x^{\prime\prime},y,\xi)
& =
  (x^{\prime\prime}-y)\cdot\xi, 
  \quad
  (\sigma,x^{\prime\prime}){\in}G(d,n), 
  y\in\mathbb{R}^n,
  \xi\in\sigma^\perp. 
\end{align*}
It suffices to show that $\phi$ is a non-degenerate phase function, and to obtain the Lagrangian submanifold $\Lambda_\phi$. If this is true, then it follows that the amplitude is the function which is identically equal to $1/(2\pi)^{n-d}$. This symbol belongs to $S^0\Bigl(\bigl(G(d,n)\times\mathbb{R}^n\bigr)\times\mathbb{R}^{n-d}\Bigr)$ and the order of the Lagrangian distribution is 
$$
0+\frac{N(d,n)+n}{4}-\frac{n-d}{2}. 
$$
\par
We find the critical points defined by $\phi_\xi^\prime=0$ for $\xi\in\sigma^\perp$. 
Set
$$
\psi_1(\sigma,x^{\prime\prime},y,\Xi)
:=
\phi(\sigma,x^{\prime\prime},y,\Xi-\pi_\sigma\Xi)
=
(x^{\prime\prime}-y)\cdot(\Xi-\pi_\sigma\Xi)
=
(x^{\prime\prime}-y+\pi_\sigma{y})\cdot\Xi
$$
for $\Xi\in\mathbb{R}^n$. Then $\Xi$ moves essentially in $\sigma^\perp$ and not in the whole space $\mathbb{R}^n$ for $\psi_1$. Then we have 
$$
\phi_\xi^\prime
=
\nabla_\Xi\psi_1\vert_{\Xi=\xi}
=
x^{\prime\prime}-(y-\pi_\sigma{y}). 
$$
Hence the critical points of $\phi$ are characterized by $x^{\prime\prime}=y-\pi_\sigma{y}$. 
Let $E$ be the $n{\times}n$ identity matrix. 
We check the non-degeneracy of the critical points. We have 
$$
\phi^{\prime\prime}_{\xi y}=-E+\pi_\sigma, 
\quad
\operatorname{rank}\phi^{\prime\prime}_{\xi y}=n-d. 
$$
Then we have $\operatorname{rank}(d\phi^\prime_\xi)\vert_{x=y-\pi_\sigma{y}}=n-d$ since $\xi$ moves in the $(n-d)$-dimensional vector subspace $\sigma^\perp$. Finally we obtain $\phi^\prime_{\sigma,x^{\prime\prime},y,\xi}\ne0$ for $\xi\in\sigma^\perp\setminus\{0\}$ since $\phi^\prime_y=-\xi$. Hence we have proved that $\phi(\sigma,x^{\prime\prime},y,\xi)$ is a non-degenerate phase function with critical points given by $x^{\prime\prime}=y-\pi_\sigma{y}$. 
\par
Finally we compute the Lagrangian submanifold 
$$
\Lambda_\phi
=
\{
(\sigma,x^{\prime\prime},y ; \phi^\prime_\sigma,\phi^\prime_{x^{\prime\prime}},\phi^\prime_y) 
: 
(\sigma,x^{\prime\prime}) \in G(d,n), y\in\mathbb{R}^n, \xi\in\sigma^\perp, x^{\prime\prime}=y-\pi_\sigma{y}
\}
$$
of $T^\ast\bigl(G(d,n)\times\mathbb{R}^n\bigr)\setminus0$. 
Recall that $\phi^\prime_y=-\xi$. 
Set 
$$
\psi_2(\sigma,x,y,\xi)
:=
\phi(\sigma,x-\pi_\sigma{x},y,\xi)
=
(x-y)\cdot\xi
$$
for $(\sigma,x,y,\xi) \in G_{d,n}\times\mathbb{R}^n\times\mathbb{R}^n\times\sigma^\perp$. 
Then we have $\phi_{x^{\prime\prime}}^\prime=\nabla_x\psi_2\vert_{x=x^{\prime\prime}}=\xi$. 
Next we compute $\phi^\prime_\sigma$. Suppose that $\sigma$ is spanned by an orthonormal system 
$\{\omega_1,\dotsc,\omega_d\}$ in $\mathbb{R}^n$. 
We compute $\phi^\prime_{\omega_j}$. 
Consider a small perturbation of $\sigma$ of the form 
$$
\tilde{\sigma}
=
\langle\mu_1,\dotsc.\mu_d\rangle=\sigma+o(1),
\quad
\mu_1,\dotsc.\mu_d\in\mathbb{R}^n. 
$$
Set 
$$
\psi_3(\tilde{\sigma},x^{\prime\prime},y,\xi)
:=
\left(
x-y+\sum_{j=1}^d(y\cdot\mu_j)\mu_j
\right)
\cdot\xi
=
(x^{\prime\prime}-y)\cdot\xi
+
\sum_{j=1}^d(y\cdot\mu_j)(\xi\cdot\mu_j). 
$$
Then we have 
$$
\phi^\prime_{\omega_j}
=
\bigl(
\nabla_{\mu_j}\psi_3
-
(\mu_j\cdot\nabla_{\mu_j}\psi_3)\mu_j
\bigr)\vert_{\mu_j=\omega_j}
=
(\xi\cdot\omega_j)y+(y\cdot\omega_j)\xi
=
(y\cdot\omega_j)\xi
$$
since $\xi\in\langle\omega_1,\dotsc,\omega_d\rangle^\perp$. 
Hence we obtain 
$$
\phi^\prime_{\sigma,x^{\prime\prime},y}
=
\xi(y\cdot\omega_1,\dotsc,y\cdot\omega_d,1,-1)
$$
for $x^{\prime\prime} \in \sigma^\perp$, 
$y\in\mathbb{R}^n$ 
and 
$\xi \in \sigma^\perp\setminus\{0\}$. 
Summing up the above computation, we deduce that 
\begin{align*}
  \Lambda_\phi
& =
  \bigl\{
  \bigl(
  \sigma,y-\pi_\sigma{y},y;
  \eta(y\cdot\omega_1,\dotsc,y\cdot\omega_d,1,-1)
  \bigr) 
  :
\\
& \qquad\qquad 
  \sigma=\langle\omega_1,\dotsc,\omega_d\rangle_{\operatorname{ON}}{\in}G_{d,n}, 
  y\in\mathbb{R}^n, \eta\in\sigma^\perp\setminus\{0\}
  \bigr\} 
\\
& =
  \bigl\{
  \bigl(
  \sigma,y-{\pi_\sigma}y,y;
  \eta(y\cdot\omega_1,\dotsc,y\cdot\omega_d,1,-1)
  \bigr)
  : 
\\
& \qquad\qquad
  (y,\eta) \in T^\ast\mathbb{R}^n\setminus0, 
  \sigma=\langle\omega_1,\dotsc,\omega_d\rangle_{\operatorname{ON}}\in G_{d,n}, 
  \sigma\subset\eta^\perp
  \bigr\}
\\
& =
  \bigl\{
  \bigl(
  \sigma,x^{\prime\prime},x^{\prime\prime}+t_1\omega_1+\dotsb+t_d\omega_d;
  \xi(t_1,\dotsc,t_d,1,-1)
  \bigr)
\\
& \qquad\qquad 
  : 
  (\sigma,x^{\prime\prime}){\in}G(d,n), 
  \sigma=\langle\omega_1,\dotsc,\omega_d\rangle_{\operatorname{ON}}\in G_{d,n}, 
  t_1,\dotsc,t_d\in\mathbb{R}, 
  \xi\in\sigma^\perp\setminus\{0\}
  \bigr\}.
\end{align*} 
This completes the proof.  
\end{proof}
%
%
%
%
\section{Geometry of metal regions}
\label{section:metalregion}
In Section~\ref{section:introduction} we reviewed the known results on the artifacts caused by a metal region $D$ in $\mathbb{R}^2$. In the present section we study the metal streaking artifacts in general space dimension $n$. Let $E\in[0,\infty)$ be the energy level of X-rays and let $E_0>0$ be the standard level for human tissue. Suppose that the distribution of attenuation coefficients on $\mathbb{R}^n$ depends on $E$ of the form 
$$
f_E(x)=f_{E_0}(x)+\alpha(E-E_0)\chi_D(x), 
\quad
x\in\mathbb{R}^n, 
$$
where $\alpha>0$ is a constant. The spectral function is supposed to be a probability density of uniformly distributed on a closed interval $[E_0-\varepsilon,E_0+\varepsilon]\subset[0,\infty)$ of the form 
$$
\rho(E)
=
\frac{1}{2\varepsilon}\chi_{[E_0-\varepsilon,E_0+\varepsilon]}(E),
\quad
E\in[0,\infty).
$$ 
Then the measurement is not $\mathcal{R}_df_E(\sigma,x^{\prime\prime})$ but 
\begin{align*}
  P_d(\sigma,x^{\prime\prime})
& =
  -
  \log
  \left\{
  \int_{E_0-\varepsilon}^{E_0+\varepsilon}
  \rho(E)
  \exp\bigl(-\mathcal{R}_df_E(\sigma,x^{\prime\prime})\bigr)
  dE
  \right\} 
\\
& =
  \mathcal{R}_df_{E_0}(\sigma,x^{\prime\prime})
  +
  P_{d,\text{MA}}(\sigma,x^{\prime\prime}),
\\
  P_{d,\text{MA}}(\sigma,x^{\prime\prime})
& =
  \sum_{k=1}^\infty
  \frac{(-1)^k}{k}
  \left\{
  \sum_{l=1}^\infty
  \frac{\bigl(\alpha\varepsilon\mathcal{R}_d\chi_D(\sigma,x^{\prime\prime})\bigr)^{2l}}{(2l+1)!}
  \right\}^k
\\
& =
  \sum_{l=1}^\infty
  A_l
  \bigl(\alpha\varepsilon\mathcal{R}_d\chi_D(\sigma,x^{\prime\prime})\bigr)^{2l}, 
\end{align*}
where $\{A_l\}_{l=1}^\infty$ is a sequence of real numbers. 
The CT image is denoted by $f_\text{CT}$, that is, 
\begin{align*}
  f_\text{CT}
& :=
  \mathcal{R}_d^\ast(-\Delta_{x^{\prime\prime}})^{d/2}P_d
  =
  f_{E_0}+f_\text{MA},
\\
  f_\text{MA}
& =
  \mathcal{R}_d^\ast(-\Delta_{x^{\prime\prime}})^{d/2}P_{d,\text{MA}}
  =
  \sum_{l=1}^\infty
  A_l
  (\alpha\varepsilon)^{2l}
   \mathcal{R}_d^\ast(-\Delta_{x^{\prime\prime}})^{d/2}
   \bigl[(\mathcal{R}_d\chi_D)^{2l}\bigr]
\end{align*}
The metal streaking artifacts are nonlinear effects and we need to study the microlocal singularities of 
$$
(\mathcal{R}_d\chi_D)^{2l}, 
\quad
\mathcal{R}_d^\ast(-\Delta_{x^{\prime\prime}})^{d/2}\bigl[(\mathcal{R}_d\chi_D)^{2l}\bigr]
\quad
(l=1,2,3,\dotsc). 
$$ 
Throughout the present paper we assume the following:
\begin{itemize}
\item[(A)] 
$D$ is a union of bounded strictly convex smooth domains 
$D_1,\dotsc,D_J \subset \mathbb{R}^n$ with $J=1,2,3,\dotsc$ 
such that $\overline{D_1},\dotsc,\overline{D_J}$ are mutually disjoint. 
\end{itemize}
Set $\Sigma_j:=\partial{D}_j$ and $\Sigma:=\displaystyle\bigcup_{j=1}^J\Sigma_j$. 
Then $\Sigma=\partial{D}$. 
We denote by $\pi_{G(d,n)}$ the projection of $T^\ast{G(d,n)}$ onto $G(d,n)$. 
We prepare some lemmas related with basic properties of 
$\Lambda_\phi^\prime{\circ}N^\ast\Sigma_j$. 
The first lemma says that $\Lambda_\phi^\prime{\circ}N^\ast\Sigma_j$ 
is also a conormal bundle of a hypersurface of $G(d,n)$. 
\begin{lemma}
\label{theorem:lemma1}
A composition $\Lambda_\phi^\prime{\circ}N^\ast\Sigma_j$ is transversal. 
Moreover if we set $S_j:=\pi_{G(d,n)}(\Lambda_\phi^\prime{\circ}N^\ast\Sigma_j)$, 
then 
$\operatorname{codim}S_j=1$ 
and 
$N^\ast{S_j}=\Lambda_\phi^\prime{\circ}N^\ast\Sigma_j$. 
\end{lemma}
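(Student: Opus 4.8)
The plan is to work with the concrete description of $\Lambda_\phi^\prime$ from Theorem~\ref{theorem:canonicalrd} and the conormal bundle $N^\ast\Sigma_j$, and to compute their composition directly. Since $\Sigma_j=\partial D_j$ is a smooth strictly convex hypersurface of $\mathbb{R}^n$, we have $N^\ast\Sigma_j=\{(y,\eta): y\in\Sigma_j,\ \eta\in\mathbb{R}\nu(y)\setminus\{0\}\}$, where $\nu(y)$ is the outward unit normal at $y$; strict convexity means the Gauss map $y\mapsto\nu(y)$ is a diffeomorphism from $\Sigma_j$ onto $S^{n-1}$. First I would take a point $(y,\eta)\in N^\ast\Sigma_j$ and ask for which $\bigl(\sigma,x^{\prime\prime},z;\zeta\bigr)\in\Lambda_\phi^\prime$ one has $z=y$ and the $T^\ast\mathbb{R}^n$-component equal to $(y,\eta)$. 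By the second description of $\Lambda_\phi^\prime$, this forces $\sigma\subset\eta^\perp$, $x^{\prime\prime}=y-\pi_\sigma y$, and the covector over $G(d,n)$ to be $\eta(y\cdot\omega_1,\dots,y\cdot\omega_d,1,1)$ for an orthonormal basis $\{\omega_j\}$ of $\sigma$. So $\Lambda_\phi^\prime{\circ}N^\ast\Sigma_j$ is parametrized by the choice of $y\in\Sigma_j$ together with the choice of a $d$-dimensional subspace $\sigma$ contained in the hyperplane $\nu(y)^\perp$.

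Next I would identify $S_j=\pi_{G(d,n)}(\Lambda_\phi^\prime{\circ}N^\ast\Sigma_j)$ as a set: it is $\{(\sigma,x^{\prime\prime})\in G(d,n): \text{the $d$-plane }x^{\prime\prime}+\sigma\text{ is tangent to }\Sigma_j\}$. Indeed $\sigma\subset\nu(y)^\perp=T_y\Sigma_j$ and $x^{\prime\prime}=y-\pi_\sigma y$ together say exactly that the affine plane $x^{\prime\prime}+\sigma$ passes through $y$ with direction space inside $T_y\Sigma_j$, i.e.\ is tangent to $\Sigma_j$ at $y$. I would then show $S_j$ is a smooth hypersurface by exhibiting it as a level set with nonvanishing differential, or equivalently by counting dimensions of the parametrization: the pairs $(y,\sigma)$ with $y\in\Sigma_j$ and $\sigma\in G_{d,n}$, $\sigma\subset T_y\Sigma_j\cong\mathbb{R}^{n-1}$, form a manifold of dimension $(n-1)+d(n-1-d)$. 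One checks that the map $(y,\sigma)\mapsto(\sigma,y-\pi_\sigma y)$ onto $S_j$ is, by strict convexity, injective (for fixed $\sigma$ the tangency point $y$ is unique, since the Gauss image hits $\sigma^\perp\cap S^{n-1}$ and the tangent plane at $y$ then determines $x^{\prime\prime}$), hence $\dim S_j=(n-1)+d(n-1-d)=(d+1)(n-d)-1=N(d,n)-1$, so $\operatorname{codim}S_j=1$.

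Finally I would verify $N^\ast S_j=\Lambda_\phi^\prime{\circ}N^\ast\Sigma_j$. One inclusion is already built in: $\Lambda_\phi^\prime$ is a canonical relation, so $\Lambda_\phi^\prime{\circ}N^\ast\Sigma_j$ is a conic isotropic (in fact, since $N^\ast\Sigma_j$ is Lagrangian and the composition is transversal, Lagrangian) submanifold of $T^\ast G(d,n)\setminus0$ lying over $S_j$; a conic Lagrangian lying over a submanifold $S$ of the base must be contained in $N^\ast S$. Since both sides are smooth conic Lagrangian submanifolds over the same hypersurface $S_j$ and hence have the same dimension $N(d,n)$, and the fibers of $N^\ast S_j$ are one-dimensional half-lines which the parametrized $\eta\in\sigma^\perp\setminus\{0\}$ with $\eta\perp\sigma$, $\eta\parallel\nu(y)$ already sweeps out, the inclusion is an equality. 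The one point needing care is the transversality of the composition $\Lambda_\phi^\prime\circ N^\ast\Sigma_j$ — i.e.\ that $\Lambda_\phi^\prime$ and $N^\ast\Sigma_j\times T^\ast G(d,n)$ intersect cleanly inside $T^\ast\mathbb{R}^n\times T^\ast G(d,n)$; I expect this to be the main obstacle, and I would handle it by a direct tangent-space computation at a generic point, using strict convexity (nondegeneracy of the second fundamental form of $\Sigma_j$) to see that the relevant differentials are surjective. Once transversality is in hand, everything else is the bookkeeping of the parametrization above.
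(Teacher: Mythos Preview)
Your proposal is correct and follows essentially the same route as the paper: compute $\Lambda_\phi^\prime\circ N^\ast\Sigma_j$ from the explicit formula for $\Lambda_\phi^\prime$, identify $S_j$ as the set of $d$-planes tangent to $\Sigma_j$, count dimensions via $\dim\Sigma_j+\dim G_{d,n-1}=(n-1)+d(n-1-d)=N(d,n)-1$, and then conclude $N^\ast S_j=\Lambda_\phi^\prime\circ N^\ast\Sigma_j$ from the fact that both are conic Lagrangians over the same base $S_j$. You are in fact slightly more careful than the paper in two places---you invoke strict convexity to justify injectivity of the parametrization $(y,\sigma)\mapsto(\sigma,y-\pi_\sigma y)$, and you flag the clean-intersection issue for the composition---whereas the paper simply asserts the dimension count and appeals to ``the canonical transform preserves the canonical symplectic form'' without further comment.
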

\begin{proof}[{\bf Proof}]
Recall that $N^\ast\Sigma_j$ and $\Lambda_\phi^\prime$ are concretely given by 
$$
N^\ast\Sigma_j
=
\{
(y,\eta) \in T^\ast\mathbb{R}^n 
: 
y \in \Sigma_j, 
\eta \in N^\ast_y\Sigma_j
\},
$$
and 
\begin{align*}
  \Lambda_\phi^\prime
& =
  \bigl\{
  \bigl(
  \sigma,y-{\pi_\sigma}y,y;
  \eta(y\cdot\omega_1,\dotsc,y\cdot\omega_d,1,1)
  \bigr)
  : 
\\
& \qquad\qquad
  (y,\eta) \in T^\ast\mathbb{R}^n\setminus0, 
  \sigma=\langle\omega_1,\dotsc,\omega_d\rangle_{\operatorname{ON}}\in G_{d,n}, 
  \sigma\subset\eta^\perp
  \bigr\}
\end{align*}
respectively. Then we have 
\begin{align*}
  M_1
& :=
  \bigl(T^\ast{G(d,n)}\times\Delta(T^\ast\mathbb{R}^n)\bigr)\cap(\Lambda_\phi^\prime{\times}N^\ast\Sigma_j) 
\\
&  =
   \Bigl\{
  \Bigl(
  \bigl(\sigma,y-\pi_\sigma{y},\eta(\pi_\sigma{y},1)\bigr),(y,\eta), (y,\eta)
  \Bigr) : 
  (y,\eta) \in N^\ast\Sigma_j\setminus0, 
  \sigma \in G_{d,n}\cap\eta^\perp
  \Bigr\}
\end{align*}
and 
\begin{align*}
  \dim{M_1}
& =
  \dim(N^\ast\Sigma_j)+\dim(G_{d,n}\cap\eta^\perp)
  =
  \dim(N^\ast\Sigma_j)+\dim(G_{d,n-1})
\\
& =
  n+d(n-1-d)  
  =
  (d+1)(n-d)
  =
  N(d,n)+0. 
\end{align*}
Hence $\Lambda_\phi^\prime{\circ}N^\ast\Sigma_j$ is transversal. 
Moreover we have  
\begin{align*}
  \Lambda_\phi^\prime{\circ}N^\ast\Sigma_j
& =
  \bigl\{
  \bigl(
  \sigma,y-{\pi_\sigma}y;\eta(y\cdot\omega_1,\dotsc,y\cdot\omega_d,1)
  \bigr)
  : 
\\
& \qquad\qquad
  (y,\eta){\in}N^\ast\Sigma_j\setminus0, 
  \sigma=\langle\omega_1,\dotsc,\omega_d\rangle_{\operatorname{ON}}\in G_{d,n}, 
  \sigma\subset\eta^\perp
  \bigr\} 
\end{align*}
and therefore 
$$
S_j
=
\pi_{G(d,n)}(\Lambda_\phi^\prime{\circ}N^\ast\Sigma_j)
=
\{
(\sigma,y-{\pi_\sigma}y)
 : 
y\in\Sigma_j, 
\sigma \in G_{d,n}, 
\sigma \subset T_y\Sigma_j
\}. 
$$
Then we deduce that 
\begin{align*}
  \operatorname{dim}S_j
& =
  \operatorname{dim}\Sigma_j
  +
  \operatorname{dim}G_{d,n-1}
  =
  (n-1)+d(n-1-d)
\\
& =
  (d+1)(n-d)-1
  =
  N(d,n)-1. 
\end{align*}
This shows that $S_j$ is a hypersurface in $G(d,n)$ and $\operatorname{codim}S_j=1$. 
The canonical symplectic form of $T^\ast\mathbb{R}^n$ vanishes on $N^\ast\Sigma_j$, 
and the canonical transform preserves the canonical symplectic form. 
Hence the canonical symplectic form of $T^\ast{G(d,n)}$ also vanishes on 
$\Lambda_\phi^\prime{\circ}N^\ast\Sigma_j$. 
Combining these facts we deduce that $N^\ast{S_j}=\Lambda_\phi^\prime{\circ}N^\ast\Sigma_j$ 
since $N^\ast{S_j}$ and $\Lambda_\phi^\prime{\circ}N^\ast\Sigma_j$ 
are conic Lagrangian submanifolds with the same base space $S_j$. 
\end{proof}
Next we discuss the intersection $S_{jk}:=S_j{\cap}S_k$ for $j{\ne}k$. The structure of the intersection of Lagrangian submanifolds determines some properties of the product of Lagrangian distributions. So the intersection calculus of $S_{jk}$ for $j{\ne}k$ plays a crucial role in the microlocal analysis of nonlinear effects of $\mathcal{R}_d\chi_D$. In particular we need to know $\operatorname{codim}S_{jk}$ and the relationship between $N^\ast{S_j}$, $N^\ast{S_k}$ and $N^\ast{S_{jk}}$. We begin with some basic properties of  the intersection $S_{jk}$ for $j{\ne}k$. We first remark that  $S_{jk}\ne\emptyset$ holds if and only if there exist 
$(\sigma,x^{\prime\prime}) \in S_{jk}$, $y_j\in\Sigma_j$ and $y_k\in\Sigma_k$ such that 
$y_j{\ne}y_k$ since $\Sigma_j\cap\Sigma_k=\emptyset$, and 
$$
x^{\prime\prime}=y_j-\pi_\sigma{y_j}=y_k-\pi_\sigma{y_k}, 
\quad
\sigma \subset T_{y_j}\Sigma_j{\cap}T_{y_k}\Sigma_k. 
$$
If $N^\ast_{y_j}\Sigma=N^\ast_{y_k}\Sigma$, then $T_{y_j}\Sigma_j=T_{y_k}\Sigma_k$, 
otherwise 
$$
T_{y_j}\Sigma_j{\cap}T_{y_k}\Sigma_k
=
\bigl(N^\ast_{y_j}\Sigma_j\bigr)^\perp\cap\bigl(N^\ast_{y_k}\Sigma_k\bigr)^\perp
$$ 
is an $(n-2)$-dimensional vector subspace. See Figures~2 and 3 below. 
\begin{multicols}{2}
\begin{center}
\includegraphics[width=50mm]{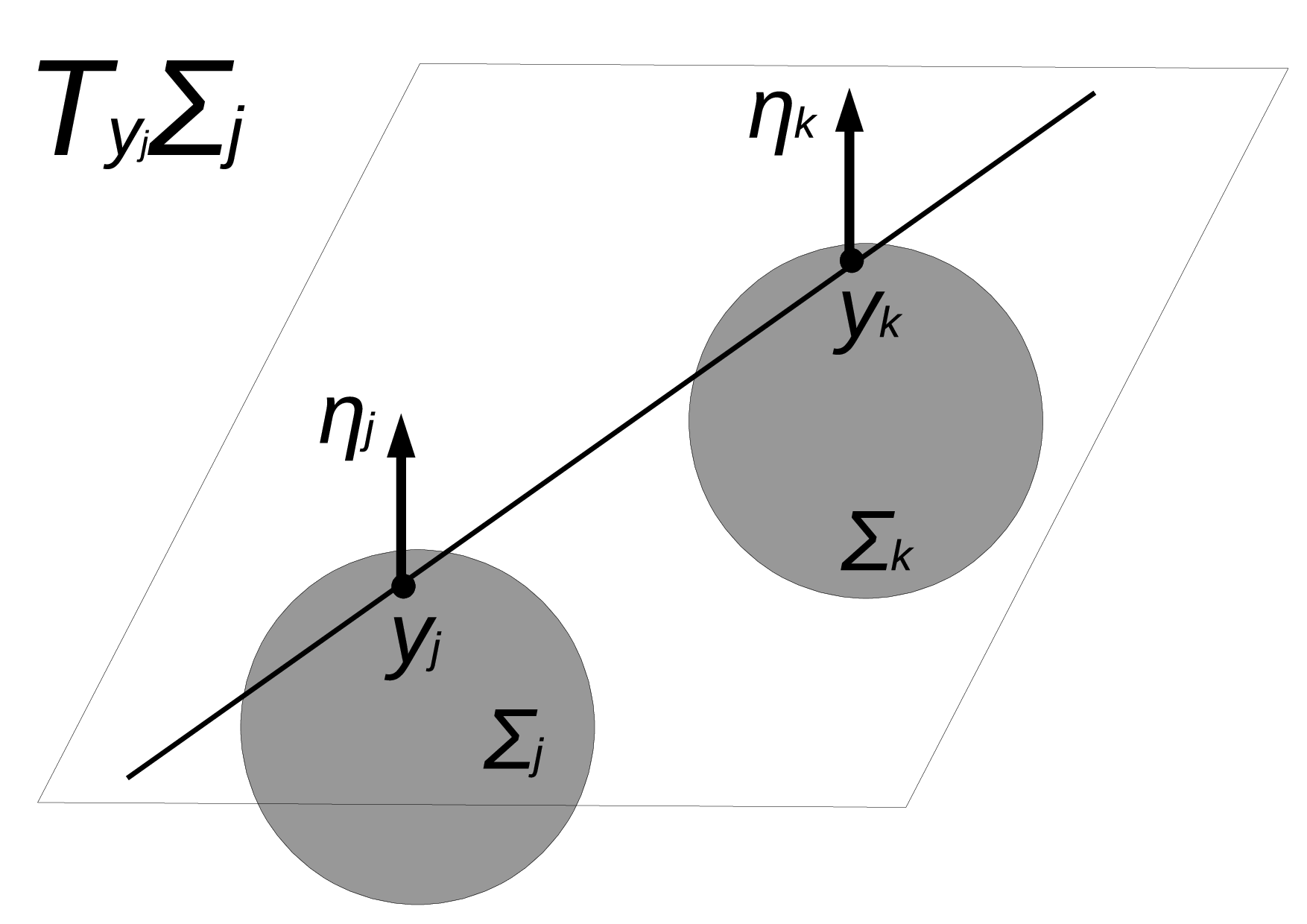}
\\
Figure~2. 
This illustrates the case of $N^\ast_{y_j}\Sigma=N^\ast_{y_k}\Sigma$ with 
$\eta_j, \eta_k \in N^\ast\Sigma_j\setminus\{0\}=N^\ast\Sigma_k\setminus\{0\}$. 
\end{center}
\begin{center}
\includegraphics[width=50mm]{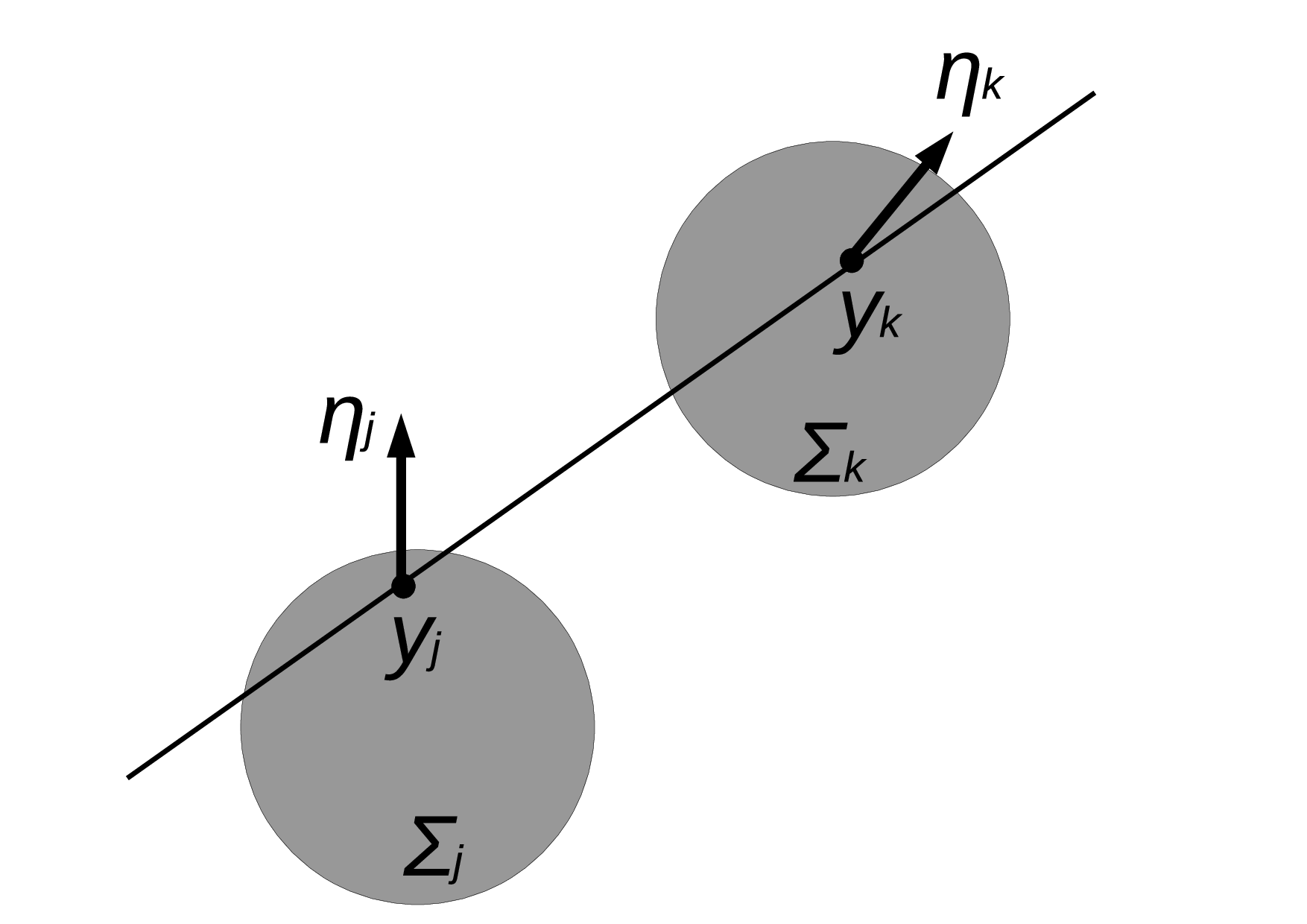}
\\
Figure~3. 
This illustrates the case of $N^\ast_{y_j}\Sigma{\ne}N^\ast_{y_k}\Sigma$ with 
$\eta_j \in N^\ast\Sigma_j\setminus\{0\}$ and $\eta_k \in N^\ast\Sigma_k\setminus\{0\}$. 
\end{center}
\end{multicols}
We split ${S_{jk}}$ into two parts: 
\begin{align*}
  S_{jk}^{(1)}
& = 
  \{(\sigma,x^{\prime\prime}) \in S_{jk} : N^\ast_{y_j}\Sigma=N^\ast_{y_k}\Sigma\},
\\
  S_{jk}^{(2)}
& = 
  \{(\sigma,x^{\prime\prime}) \in S_{jk} : N^\ast_{y_j}\Sigma{\ne}N^\ast_{y_k}\Sigma\}. 
\end{align*}
Clearly we have a disjoint union $S_{jk}=S_{jk}^{(1)}{\cup}S_{jk}^{(2)}$, 
and in particular $S_{jk}=S_{jk}^{(1)}$ and $S_{jk}^{(2)}=\emptyset$ if $d=n-1$. 
Here we state the properties of the intersection $S_{jk}$. 
\begin{lemma}
\label{theorem:intersection}
Suppose that $j{\ne}k$ and $S_{jk}\ne\emptyset$. 
Then $S_j$ and $S_k$ intersect transversely. 
\end{lemma}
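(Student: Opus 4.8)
The plan is to verify transversality directly from the explicit description of $N^\ast S_j = \Lambda_\phi^\prime \circ N^\ast\Sigma_j$ obtained in Lemma~\ref{theorem:lemma1}, by showing that at any common point the two conormal fibers meet only at the origin. Fix $(\sigma,x^{\prime\prime}) \in S_{jk}$, so that there exist $y_j \in \Sigma_j$ and $y_k \in \Sigma_k$ with $x^{\prime\prime} = y_j - \pi_\sigma y_j = y_k - \pi_\sigma y_k$ and $\sigma \subset T_{y_j}\Sigma_j \cap T_{y_k}\Sigma_k$, and necessarily $y_j \ne y_k$. From the formula for $\Lambda_\phi^\prime \circ N^\ast\Sigma_j$ in Lemma~\ref{theorem:lemma1}, a covector in $N^\ast_{(\sigma,x^{\prime\prime})}S_j$ has the shape $\eta_j(y_j\cdot\omega_1,\dotsc,y_j\cdot\omega_d,1)$ for some $\eta_j \in N^\ast_{y_j}\Sigma_j$ with $\sigma \subset \eta_j^\perp$, where $\{\omega_1,\dotsc,\omega_d\}$ is an orthonormal basis of $\sigma$; likewise for $N^\ast_{(\sigma,x^{\prime\prime})}S_k$ with $y_k$ and some $\eta_k \in N^\ast_{y_k}\Sigma_k$. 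Note $\eta_j, \eta_k$ are scalars multiplied by unit normals, since $D_j, D_k$ are strictly convex with smooth boundary, so $N^\ast_{y}\Sigma$ is one-dimensional at each point.

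The core computation is to suppose $\eta_j(y_j\cdot\omega_1,\dotsc,y_j\cdot\omega_d,1) = \eta_k(y_k\cdot\omega_1,\dotsc,y_k\cdot\omega_d,1)$ as elements of $T^\ast_{(\sigma,x^{\prime\prime})}G(d,n)$, and to deduce both sides are zero. Reading off the last ``$1$''-component in the $\sigma^\perp$-slot forces $\eta_j = \eta_k =: \eta$ (as vectors in $\sigma^\perp$, using the identification of the cotangent fiber from Lemma~\ref{theorem:cotangentbundle}). If $\eta \ne 0$, then comparing the $\omega_l$-components gives $\eta(y_j\cdot\omega_l) = \eta(y_k\cdot\omega_l)$ for $l = 1,\dotsc,d$, hence $\pi_\sigma y_j = \pi_\sigma y_k$. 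But then $y_j - y_k = (y_j - \pi_\sigma y_j) - (y_k - \pi_\sigma y_k) + (\pi_\sigma y_j - \pi_\sigma y_k) = x^{\prime\prime} - x^{\prime\prime} + 0 = 0$, contradicting $y_j \ne y_k$. Therefore $\eta = 0$ and the common covector is $0$, which is precisely $N^\ast_{(\sigma,x^{\prime\prime})}S_j \cap N^\ast_{(\sigma,x^{\prime\prime})}S_k = \{0\}$; since $(\sigma,x^{\prime\prime})$ was arbitrary in $S_{jk}$, Definition~\ref{theorem:transversality} gives transversality.

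I expect the main obstacle to be bookkeeping rather than a conceptual difficulty: one must be careful that the presentation of covectors in $N^\ast S_j$ depends on the chosen orthonormal basis $\{\omega_1,\dotsc,\omega_d\}$ of $\sigma$ (the paper flags this after Lemma~\ref{theorem:cotangentbundle}), so the two covectors being compared must be written with respect to \emph{the same} basis before matching components — which is legitimate since both $S_j$ and $S_k$ pass through the same point $(\sigma,x^{\prime\prime})$ with the same $\sigma$. A secondary point to handle cleanly is the constraint $\sigma \subset \eta^\perp$: it is automatically consistent on both sides once $\eta_j = \eta_k$, because $\sigma \subset T_{y_j}\Sigma_j = (N^\ast_{y_j}\Sigma_j)^\perp \ni \eta_j^\perp$-wise, and similarly for $k$; so no extra case distinction (such as the $S_{jk}^{(1)}$ versus $S_{jk}^{(2)}$ split) is actually needed for transversality — that dichotomy matters later for the clean-intersection analysis of the product, not here. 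Finally, the transversality immediately yields $\operatorname{codim}(S_j \cap S_k) = \operatorname{codim} S_j + \operatorname{codim} S_k = 2$ and cleanness of the intersection by the standard fact quoted just before the lemma.
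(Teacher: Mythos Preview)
Your proof is correct and follows essentially the same approach as the paper: both verify $N^\ast_{(\sigma,x^{\prime\prime})}S_j \cap N^\ast_{(\sigma,x^{\prime\prime})}S_k = \{0\}$ directly from the explicit form of the conormal fibers given by Lemma~\ref{theorem:lemma1}. The only cosmetic difference is that the paper first chooses an adapted orthonormal basis $\{\omega_0,\omega_1,\dotsc\}$ of $\sigma$ with $\omega_1 = (y_k-y_j)/\lvert y_k-y_j\rvert$, so that the distinguishing coordinate appears as $t_j \ne t_k$ in a single slot (a normalization reused in the proof of Lemma~\ref{theorem:conormal2}), whereas you argue basis-free and recover $\pi_\sigma y_j = \pi_\sigma y_k$ from all $d$ slots at once.
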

\begin{proof}[{\bf Proof}]
Fix arbitrary $(\sigma,x^{\prime\prime}) \in S_{jk}$. 
There exist $y_j\in\Sigma_j$ and $y_k\in\Sigma_k$ such that 
$\sigma \subset T_{y_j}\Sigma_j{\cap}T_{y_k}\Sigma_k$. 
Hence we can choose an orthonormal system $\{\omega_0,\omega_1\}\subset\sigma$ 
and $t_0,t_j,t_k\in\mathbb{R}$ such that 
\begin{equation}
y_j=x^{\prime\prime}+t_0\omega_0+t_j\omega_1,
\quad
y_k=x^{\prime\prime}+t_0\omega_0+t_k\omega_1.
\label{equation:t0}
\end{equation}
More precisely $\omega_1$ is a vector of the direction of $y_k-y_j$ and 
we can set $\omega_1:=(y_k-y_j)/\lvert{y_k-y_j\rvert}$. 
$\omega_0$ is a perpendicular direction from $x^{\prime\prime}$ 
to the line connecting $y_j$ and $y_k$. 
In case $d=1$ we think that $t_0=0$. 
We have $t_j{\ne}t_k$ since $\Sigma_j\cap\Sigma_k=\emptyset$. 
Recall Lemma~\ref{theorem:cotangentbundle}. 
In the same way as the proof of Lemma~\ref{theorem:lemma1}, we have 
\begin{align*}
  N^\ast_{(\sigma,x^{\prime\prime})}{S_j}
& =
  \{\eta_j(t_0,t_j,0,\dotsc,0,1) : \eta_j \in N^\ast\Sigma_j\},
\\
  N^\ast_{(\sigma,x^{\prime\prime})}{S_k}
& =
  \{\eta_k(t_0,t_k,0,\dotsc,0,1) : \eta_k \in N^\ast\Sigma_k\}.  
\end{align*}
If $\eta_j\ne0$ and $\eta_k\ne0$, then vectors $\eta_j(t_0,t_j,0,\dotsc,0,1)$ and $\eta_k(t_0,t_k,0,\dotsc,0,1)$ are linearly independent in $T^\ast_{(\sigma,x^{\prime\prime})}G(d,n)$ since $t_j{\ne}t_k$, and therefore $N^\ast_{(\sigma,x^{\prime\prime})}{S_j}{\cap}N^\ast_{(\sigma,x^{\prime\prime})}{S_k}=\{0\}$. This completes the proof. 
\end{proof}
Lemma~\ref{theorem:intersection} implies that $\operatorname{codim}(S_{jk})=2$. Next we obtain the concrete expression of $N^\ast{S_{jk}}$ which is required for computing the composition 
$(\Lambda_\phi^\prime){\circ}N^\ast{S_{jk}}$. 
\begin{lemma}
\label{theorem:conormal2}
\begin{align*}
  N^\ast{S_{jk}^{(1)}}
& =
  \bigl\{
  \bigl(
  \sigma,x^{\prime\prime};\eta(t_0,t,0,\dotsc,0,1)
  \bigr)
  : 
  N^\ast_{y_j}\Sigma_j= N^\ast_{y_k}\Sigma_k, 
\\
& \qquad\qquad
  \sigma \subset T_{y_j}\Sigma_j, 
  x^{\prime\prime}=y_j-\pi_\sigma{y_j}=y_k-\pi_\sigma{y_k}, 
\\
& \qquad\qquad
  t\in\mathbb{R}, 
  \eta \in N^\ast_{y_j}\Sigma_j
  \ \text{with some}\ 
  t_0\in\mathbb{R}, y_j\in\Sigma_j, y_k\in\Sigma_k  
  \bigr\},
\\
  N^\ast{S_{jk}^{(2)}}
& = 
  \bigl\{
  \bigl(
  \sigma,x^{\prime\prime};\Xi
  \bigr)
  :
  N^\ast_{y_j}\Sigma_j \ne N^\ast_{y_k}\Sigma_k
\\
& \qquad\qquad
  \sigma \subset T_{y_j}\Sigma_j{\cap}T_{y_k}\Sigma_k, 
  x^{\prime\prime}=y_j-\pi_\sigma{y_j}=y_k-\pi_\sigma{y_k}, 
\\
& \qquad\qquad
  \Xi \in N^\ast_{(\sigma,x^{\prime\prime})}S_j \oplus N^\ast_{(\sigma,x^{\prime\prime})}S_k 
  \ \text{with some}\ 
  y_j\in\Sigma_j, y_k\in\Sigma_k  
  \bigr\}. 
\end{align*}
\end{lemma}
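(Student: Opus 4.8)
The plan is to compute $N^\ast S_{jk}$ directly from the explicit description of $S_j$ obtained in Lemma~\ref{theorem:lemma1} and the conormal bundle computation carried out in the proof of Lemma~\ref{theorem:intersection}, treating the two pieces $S_{jk}^{(1)}$ and $S_{jk}^{(2)}$ separately according to whether the two tangent planes $T_{y_j}\Sigma_j$ and $T_{y_k}\Sigma_k$ coincide. The starting point is that, by Lemma~\ref{theorem:intersection}, $S_j$ and $S_k$ meet transversally along $S_{jk}$, so $N^\ast_{(\sigma,x^{\prime\prime})}S_{jk}=N^\ast_{(\sigma,x^{\prime\prime})}S_j\oplus N^\ast_{(\sigma,x^{\prime\prime})}S_k$ at every point of the intersection; this is precisely the standard fact recalled just before Lemma~\ref{theorem:intersection}. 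So the whole task reduces to unwinding the two summands in the adapted orthonormal frame $\{\omega_0,\omega_1,\dotsc\}$ already set up in the proof of Lemma~\ref{theorem:intersection}, and then recognizing the resulting set as the asserted expression.

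First I would fix $(\sigma,x^{\prime\prime})\in S_{jk}$ and reproduce the frame from the proof of Lemma~\ref{theorem:intersection}: choose $y_j\in\Sigma_j$, $y_k\in\Sigma_k$ with $x^{\prime\prime}=y_j-\pi_\sigma y_j=y_k-\pi_\sigma y_k$ and $\sigma\subset T_{y_j}\Sigma_j\cap T_{y_k}\Sigma_k$, set $\omega_1=(y_k-y_j)/\lvert y_k-y_j\rvert$, and let $\omega_0\in\sigma$ be the unit vector along the perpendicular from $x^{\prime\prime}$ to the line through $y_j,y_k$, so that $y_j=x^{\prime\prime}+t_0\omega_0+t_j\omega_1$ and $y_k=x^{\prime\prime}+t_0\omega_0+t_k\omega_1$ with $t_j\ne t_k$. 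Then, exactly as in that proof,
$$
N^\ast_{(\sigma,x^{\prime\prime})}S_j=\{\eta_j(t_0,t_j,0,\dotsc,0,1):\eta_j\in N^\ast_{y_j}\Sigma_j\},\quad
N^\ast_{(\sigma,x^{\prime\prime})}S_k=\{\eta_k(t_0,t_k,0,\dotsc,0,1):\eta_k\in N^\ast_{y_k}\Sigma_k\},
$$
where I am using the coordinatization of $T^\ast_{(\sigma,x^{\prime\prime})}G(d,n)$ afforded by Lemma~\ref{theorem:cotangentbundle} with the first $d$ slots attached to the directions $\omega_0,\omega_1,\dotsc$ and the last slot to $x^{\prime\prime}\in\sigma^\perp$. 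Taking the direct sum and splitting into the two cases is then mechanical.

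For $S_{jk}^{(1)}$, where $N^\ast_{y_j}\Sigma_j=N^\ast_{y_k}\Sigma_k$, both summands are one-dimensional lines $\mathbb{R}\cdot(t_0,t_j,0,\dotsc,0,1)$ and $\mathbb{R}\cdot(t_0,t_k,0,\dotsc,0,1)$ scaled by the common $N^\ast_{y_j}\Sigma_j$; their direct sum is a two-dimensional space which, since $t_j\ne t_k$, contains every vector of the form $\eta(t_0,t,0,\dotsc,0,1)$ with $t\in\mathbb{R}$ and $\eta\in N^\ast_{y_j}\Sigma_j$ — indeed $\{(t_0,t_j,0,\dotsc),(t_0,t_k,0,\dotsc)\}$ and $\{(t_0,t,0,\dotsc),(0,1,0,\dotsc)\}$ span the same plane — and this is the first displayed formula. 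For $S_{jk}^{(2)}$, where the two conormal directions differ, the direct sum is naturally written invariantly as $N^\ast_{(\sigma,x^{\prime\prime})}S_j\oplus N^\ast_{(\sigma,x^{\prime\prime})}S_k$, which, after relabeling via Lemma~\ref{theorem:lemma1} (so that $N^\ast_{(\sigma,x^{\prime\prime})}S_j$ is what the statement abbreviates $N^\ast_{(\sigma,x^{\prime\prime})}\Sigma_j$), is the second displayed formula. The main thing to be careful about — and the only place where anything could go wrong — is the bookkeeping between the two natural frames on $\sigma$ (the one adapted to $y_k-y_j$ versus an arbitrary one) and the consequent identification of the coordinate slots: one must check that the lift along $\Lambda_\phi^\prime$ of a covector $\eta\in N^\ast_y\Sigma$, computed in the proof of Lemma~\ref{theorem:lemma1} as $\eta(y\cdot\omega_1,\dotsc,y\cdot\omega_d,1)$, really does reduce to $\eta(t_0,t_j,0,\dotsc,0,1)$ in the present frame, which follows because $y_j\cdot\omega_0=t_0$, $y_j\cdot\omega_1=t_j$, and $y_j\cdot\omega_i=0$ for the remaining basis vectors chosen orthogonal to $\omega_0,\omega_1$ inside $\sigma$ (note $x^{\prime\prime}\perp\sigma$). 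Everything else is a direct substitution, so I expect no serious obstacle.
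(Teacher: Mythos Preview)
Your proposal is correct and follows essentially the same route as the paper: both arguments fix $(\sigma,x^{\prime\prime})\in S_{jk}$, invoke the transversality from Lemma~\ref{theorem:intersection} to write $N^\ast_{(\sigma,x^{\prime\prime})}S_{jk}=N^\ast_{(\sigma,x^{\prime\prime})}S_j\oplus N^\ast_{(\sigma,x^{\prime\prime})}S_k$, reuse the adapted frame $\omega_0,\omega_1$ and the parameters $t_0,t_j,t_k$ from that proof, declare the $S_{jk}^{(2)}$ case immediate, and for $S_{jk}^{(1)}$ identify the span of $\eta(t_0,t_j,0,\dotsc,0,1)$ and $\eta(t_0,t_k,0,\dotsc,0,1)$ with the family $\{\eta(t_0,t,0,\dotsc,0,1):t\in\mathbb{R}\}$. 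The paper does this last identification by the explicit affine combination $t=(\alpha t_j+\beta t_k)/(\alpha+\beta)$ with $\alpha=t_k-t$, $\beta=t-t_j$, whereas you argue that the two pairs span the same plane; these are the same computation.
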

\begin{proof}[{\bf Proof}]
The statement on $N^\ast{S_{jk}^{(2)}}$ is obvious. 
We shall obtain $N^\ast{S_{jk}^{(1)}}$.  
Fix arbitrary $(\sigma,x^{\prime\prime}) \in S_{jk}^{(1)}$. 
We use the same notation in the proof of Lemma~\ref{theorem:intersection}. 
In view of Lemma~\ref{theorem:intersection} and its proof, we deduce that 
$$
N^\ast_{(\sigma,x^{\prime\prime})}S_{jk}^{(1)}
=
\langle\eta(t_0,t_j,0,\dotsc,0,1),\eta(t_0,t_k,0,\dotsc,0,1)\rangle
$$
with some $\eta \in N^\ast_{y_j}\Sigma_j\setminus\{0\}$. 
Recall that $t_j{\ne}t_k$. 
For any $t\in\mathbb{R}$, set $\alpha:=t_k-t$ and $\beta:=t-t_j$. 
Then $\alpha+\beta=t_k-t_j\ne0$, $t=(\alpha{t_j}+\beta{t_k})/(\alpha+\beta)$ and 
\begin{align*}
  \eta(t_0,t,0,\dotsc,0,1)
& =
  \eta\left(t_0,\frac{\alpha{t_j}+\beta{t_k}}{\alpha+\beta},0,\dotsc,0,1\right)
\\
& =
  \frac{\alpha}{\alpha+\beta}\eta(t_0,t_j,0,\dotsc,0,1)
  +
  \frac{\beta}{\alpha+\beta}\eta(t_0,t_k,0,\dotsc,0,1). 
\end{align*}
Hence we get 
$\{\eta(t_0,t,0,\dotsc,0,1) : t\in\mathbb{R}, \eta \in N^\ast_{y_j}\Sigma_j\} \subset N^\ast_{(\sigma,x^{\prime\prime})}S_{jk}$. The converse inclusion relation is obvious and we obtain 
$\{\eta(t_0,t,0,\dotsc,0,1) : t\in\mathbb{R}, \eta \in N^\ast_{y_j}\Sigma_j\}=N^\ast_{(\sigma,x^{\prime\prime})}S_{jk}$. This completes the proof. 
\end{proof}
We show that compositions 
$(\Lambda_\phi^\prime)^\ast{\circ}N^\ast{S_j}$, 
$(\Lambda_\phi^\prime)^\ast{\circ}N^\ast{S_{jk}^{(1)}}$ 
and 
$(\Lambda_\phi^\prime)^\ast{\circ}N^\ast{S_{jk}^{(2)}}$ 
are clean. 
We need this fact to consider the action of $\mathcal{R}_d^\ast$. 
For this purpose we study the relationship between $\Sigma_j$ and $\Sigma_k$. 
We denote by $\mathbb{S}^{n-1}$ the $(n-1)$-dimensional unit sphere with center at the origin in $\mathbb{R}^n$, 
and by $\nu_j(y)$ the unit outer normal vector of $\Sigma_j$ at $y\in\Sigma_j$. 
Set $\Pi_j(y)=\{x\in\mathbb{R}^n : \nu(y)\cdot(x-y)=0\}$, 
which is the tangent hyperplane of $\Sigma_j$ passing through $y\in\Sigma_j$. 
Since $D_1,\dotsc,D_J$ are strictly convex bounded smooth domains and their closures are disjoint, 
we have the following. 
\begin{itemize}
\item 
The map $\Sigma_j \ni y \mapsto \nu_j(y) \in \mathbb{S}^{n-1}$ is bijective. 
\item 
For any $y_j \in \Sigma_j$ there exit $y_k^{\pm} \in \Sigma_k$ uniquely such that 
$\nu_j(y_j)=\nu_k(y_k^+)=-\nu_k(y_k^-)$.  
\end{itemize} 
Considering these facts we introduce subsets of $\Sigma_j\times\Sigma_k$ 
describing the relationship between $\Sigma_j$ and $\Sigma_k$ for $j{\ne}k$. 
Set 
\begin{align*}
  \mathcal{M}_{jk}^{(\pm)}
& :=
  \{
  (y_j,y_k) \in \Sigma_j\times\Sigma_k : 
  \nu_j(y_j)\cdot(y_k-y_j)=0, 
\\
& \qquad 
  \nu_j(y_j)=\pm\nu_k(y_k)\ 
  \text{with some}\ y_k\in\Sigma_k
  \}, 
\end{align*}
which is the set of all pairs $(y_j,y_k) \in \Sigma_j\times\Sigma_k$ such that 
$\Pi_j(y_j)=\Pi_k(y_k)$ and $\nu_j(y_j)=\pm\nu_k(y_k)$ respectively. 
It is obvious that 
$$
\mathcal{M}_{kj}^{(\pm)}
=
\{
(y_k,y_j) \in \Sigma_k\times\Sigma_j : 
(y_j,y_k) \in \mathcal{M}_{jk}^{(\pm)}
\}.   
$$
Set $\mathcal{M}_{jk}:=\mathcal{M}_{jk}^{(+)}\cup\mathcal{M}_{jk}^{(-)}$ for short. 
Moreover we denote by $B_{jk}^{(\pm)}$ the projection of 
$\mathcal{M}_{jk}^{(\pm)}$ to $\Sigma_j$, that is,  
$$
B_{jk}^{(\pm)}
=
\{
y_j \in \Sigma_j : 
(y_j,y_k) \in \mathcal{M}_{jk}^{(\pm)},\ 
\text{}\ y_k \in \Sigma_k
\}. 
$$
\begin{lemma}
\label{theorem:omega}
\quad
\begin{itemize}
\item 
$\mathcal{M}_{jk}^{(\pm)}$ are $(n-2)$-dimensional connected submanifolds of 
$\Sigma_j\times\Sigma_k$ respectively, and 
$\mathcal{M}_{jk}^{(+)}\cap\mathcal{M}_{jk}^{(-)}=\emptyset$. 
\item 
The projections of $\mathcal{M}_{jk}^{(\pm)}$ onto $B_{jk}^{(\pm)}$ 
are diffeomorphic respectively. 
So $B_{jk}^{(\pm)}$ are $(n-2)$-dimensional connected submanifolds of 
$\Sigma_j$ respectively, and  
$B_{jk}^{(+)}{\cap}B_{jk}^{(-)}=\emptyset$.  
\item 
If we set 
$$
\mathcal{L}_{jk}^{(\pm)}
:=
\{
y_j+t(y_k-y_j) : 
(y_j,y_k)\in\mathcal{M}_{jk}^{(\pm)}, t\in\mathbb{R}
\}, 
$$
then $\mathcal{L}_{jk}^{(\pm)}$ are hypersurfaces in $\mathbb{R}^n$ respectively. 
Clearly $\mathcal{L}_{jk}^{(\pm)}=\mathcal{L}_{kj}^{(\pm)}$. 
Set 
$\mathcal{L}_{jk}:=\mathcal{L}_{jk}^{(+)}\cup\mathcal{L}_{jk}^{(-)}$ 
for short. 
\end{itemize}
\end{lemma}
\begin{proof}
Consider the motion of the hyperplane keeping the tangential contact 
with both $\Sigma_j$ and $\Sigma_j$. 
It would be intuitively obvious that 
$\mathcal{M}_{jk}^{(\pm)}$ and $B_{jk}^{(\pm)}$ 
are connected and disjoint respectively. 
We mainly deal with the $+$ case. 
So we shall show that $\mathcal{M}_{jk}^{(+)}$ 
is an $(n-2)$-dimensional submanifold of $\Sigma_j\times\Sigma_k$, 
and give its local coordinates. 
\par
Let $x=(x_1,\dotsc,x_n) \in\mathbb{R}^n$, 
let $a$ be a positive constant, 
and set $e_1=(1,0,\dotsc,0)\in\mathbb{R}^{n-1}$. 
Suppose that 
$$
\bigl((0,0,\dotsc,0,0),(a,0,\dotsc,0,0)\bigr)\in\mathcal{M}_{jk}^{(+)}, 
\quad
\nu_j(0,0,\dotsc,0,0)=(0,0,\dotsc,0,1).
$$
Without loss of generality, we may assume that there exist smooth functions 
$f(x_1,\dotsc,x_{n-1})$ and $g(x_1,\dotsc,x_{n-1})$ such that  
\begin{align*}
  \Sigma_j
& =
  \bigl\{x_n=f(x_1,\dotsc,x_{n-1})\bigr\}
  \quad\text{near}\quad 0\in\mathbb{R}^n, 
\\
  \Sigma_k
& =
  \bigl\{x_n=g(x_1,\dotsc,x_{n-1})\bigr\}
  \quad\text{near}\quad (ae_1,0)\in\mathbb{R}^n, 
\end{align*}
and 
$$
f(0)=g(ae_1)=0, 
\quad
f^\prime(0)=g^\prime(ae_1)=0, 
\quad
f^{\prime\prime}(0)<0, g^{\prime\prime}(ae_1)<0. 
$$
where $f^\prime$ the gradient vector field of $f$, 
and $f^{\prime\prime}$ is the Hessian matrix of $f$. 
Using local coordinates 
$u=(u_1,\dotsc,u_{n-1}), v=(v_1,\dotsc,v_{n-1}) \in \mathbb{R}^{n-1}$, 
we have 
$$
\Sigma_j\times\Sigma_k
=
\Bigl\{\Bigl(\bigl(u,f(u)\bigr),\bigl(v,g(v)\bigr)\Bigr)\Bigr\} 
\quad\text{near}\quad
\bigl(0,(ae_1,0)\bigr). 
$$ 
Using these coordinates, we deduce that 
$$
\nu_j\bigl(u,f(u)\bigr)=\bigl(-f^\prime(u),1\bigr), 
$$
and that the equation of $\Pi_j\bigl(u,f(u)\bigr)$ for $\bigl(v,g(v)\bigr)$ is given by 
$$
g(v)-f(u)-f^\prime(u)(v-u)=0. 
$$
Hence 
$\Bigl(\bigl(u,f(u)\bigr),\bigl(v,g(v)\bigr)\Bigr)\in\mathcal{M}_{jk}^{(+)}$ 
is characterized by 
$$
F(u,v)
:=
\begin{bmatrix}
g^\prime(v)-f^\prime(u)
\\
g(v)-f(u)-f^\prime(u)\cdot(v-u) 
\end{bmatrix}
=
0 
\quad\text{near}\quad
(u,v)=(0,ae_1). 
$$
Here $F(u,v)$ is thought to be an $n$-dimensional column vector valued function. 
We compute the rank of the gradient matrix $F^\prime(0,ae_1)$. 
We deduce that 
\begin{align*}
  F^\prime(u,v)
& =
  \begin{bmatrix}
  -f^{\prime\prime}(u) 
  & 
  g^{\prime\prime}(v)
  \\
  -f^\prime(u)^T-(v-u)f^{\prime\prime}(u)+f^\prime(u)^T 
  & 
  g^\prime(v)^T-f^\prime(u)^T
  \end{bmatrix} 
\\
& =
  \begin{bmatrix}
  -f^{\prime\prime}(u) 
  & 
  g^{\prime\prime}(v)
  \\
  -(v-u)f^{\prime\prime}(u) 
  & 
  0
  \end{bmatrix}. 
\end{align*}
Here we used $g^\prime(v)-f^\prime(u)=0$ of $F(u,v)=0$. 
Set $w_0:=-a\partial{f^\prime}/\partial u_1(0)$ for short. 
This is the first column of the matrix $f^{\prime\prime}(0)$ multiplied by $-a$. 
Then $w_0\ne0$ since $a>0$ and $f^{\prime\prime}(0)<0$. 
Then we have 
$$
F^\prime(0,ae_1)
=
\begin{bmatrix}
-f^{\prime\prime}(0) 
& 
g^{\prime\prime}(ae_1)
\\
w_0^T
& 
0
\end{bmatrix}. 
$$
Let $E_{n-1}$ and $O_{n-1}$ be 
the $(n-1)\times(n-1)$ identity matrix 
and the $(n-1)\times(n-1)$ zero matrix respectively. 
Note that $\det\bigl(g^{\prime\prime}(ae_1)\bigr)\ne0$ 
since $g^{\prime\prime}(ae_1)<0$. 
Multiply the above by a regular $(2n-2){\times}(2n-2)$ matrix from the right, we have   
$$
F^\prime(0,ae_1)
\begin{bmatrix}
E_{n-1} & O_{n-1}
\\
g^{\prime\prime}a(ae_1)^{-1}f^{\prime\prime}(0) & E_{n-1} 
\end{bmatrix}
=
\begin{bmatrix}
O_{n-1} & g^{\prime\prime}(ae_1)
\\
w_0^T & 0 
\end{bmatrix}, 
$$
which shows that $\operatorname{rank}\bigl(F^\prime(0,ae_1)\bigr)=n$. 
The implicit function theorem implies 
that $\mathcal{M}_{jk}^{(+)}$ is an $(n-2)$-dimensional submanifold of  
$\Sigma_j\times\Sigma_k$ near $(u,v)=(0,ae_1)$. 
This argument can be applied to the neighborhood of any point of 
$\mathcal{M}_{jk}^{(+)}$. 
Hence $\mathcal{M}_{jk}^{(+)}$ is an $(n-2)$-dimensional submanifold of 
$\Sigma_j\times\Sigma_k$ globally.  
\par
Next we consider local coordinates of $\mathcal{M}_{jk}^{(+)}$ 
for sufficiently small $u$ and $v-ae_1$.  
Fix arbitrary small $u^\prime:=(u_2,\dotsc,u_{n-1})\ne0$. 
The above arguments imply that there exists a pair of 
$u_1\in\mathbb{R}$ near $0$ and 
$v\in\mathbb{R}^{n-1}$ near $ae_1$ such that 
$\Bigl(\bigl((u_1,u^\prime),f(u_1,u^\prime)\bigr),\bigl(v,g(v)\bigr)\Bigr) \in \mathcal{M}_{jk}^{(+)}$. 
Set $s(u^\prime):=u_1$ and $z(u^\prime):=v$. 
If the pair of $u_1$ and $v$ is unique, then we have 
$$
\mathcal{M}_{jk}^{(+)}
=
\Bigl\{
\Bigl(
\bigl(s(u^\prime),u^\prime,f(s(u^\prime),u^\prime)\bigr),
\bigl(z(u^\prime),g(z(u^\prime))\bigr)
\Bigr)
\Bigr\}
\quad\text{near}\quad
\bigl(0,(ae_1,0)\bigr)
\in 
\mathbb{R}^n\times\mathbb{R}^n,
$$
$$
B_{jk}^{(+)}
=
\bigl\{
\bigl(s(u^\prime),u^\prime,f(s(u^\prime),u^\prime)\bigr)
\bigr\}
\quad\text{near}\quad
0
\in 
\mathbb{R}^n. 
$$
This shows that $u^\prime \in \mathbb{R}^{n-1}$ plays a role of local coordinates 
of both $\mathcal{M}_{jk}^{(+)}$ and $B_{jk}^{(+)}$. 
It follows that the projection of ${M}_{jk}^{(+)}$ onto $B_{jk}^{(+)}$ 
is diffeomorphic and $B_{jk}^{(+)}$ is an $(n-2)$-dimensional submanifold of $\Sigma_j$. 
\par
So we shall show the uniqueness of the pair of $u_1$ and $v$. 
It suffices to prove the uniqueness of $u_1$ since $v$ 
is uniquely determined by $(u_1,u^\prime)$. 
We express the equation of $\Pi_j\bigl(u,f(u)\bigr)$ of the form 
$x_n=G(u,v)$, $v=(v_1,v^\prime)\in\mathbb{R}^{n-1}$ near $v=ae_1$: 
$$
G(u,v)
=
f(u)+f^\prime(u)\cdot(v-u). 
$$ 
We consider a small perturbation of $u_1$ of the form $u_1+t$. 
Then we have 
\begin{align*}
  G(u+te_1,v)-G(u,v)
& =
  f(u+te_1)+f^\prime(u+te_1)\cdot(v_1-u_1-t,v^\prime-u^\prime)
\\
& -
  f(u)-f^\prime(u)\cdot(v_1-u_1,v^\prime-u^\prime)
\\
& =
  \{f(u+te_1)-f(u)-tf_{u_1}(u+te_1)\}
\\
& +
  \{f^\prime(u+te_1)-f^\prime(u)\}\cdot(v_1-u_1,v^\prime-u^\prime)
\\
& =
  tf^{\prime}_{u_1}(u)\cdot(v_1-u_1,v^\prime-u^\prime)
  +
  \mathcal{O}(t^2)
\\
& =
  tf^{\prime}_{u_1}(u)\cdot(a+v_1-a-u_1,v^\prime-u^\prime)
  +
  \mathcal{O}(t^2)
\\
& =
  t\{af_{u_1u_1}(u)+f^{\prime}_{u_1}(u)\cdot(v-ae_1-u)\}
  +
  \mathcal{O}(t^2). 
\end{align*}
It follows that $f_{u_1u_1}(0)<0$ since $f(u_1,0)$ is strictly concave in $u_1$. 
So $af_{u_1u_1}(u)<0$ for sufficiently small $u$. 
Hence $G(u+te_1,v)$ is strictly decreasing in $t$ near $t=0$ provided that 
$\lvert{v-ae_1}\rvert$ and $\lvert{u}\rvert$ are sufficiently small. 
This shows that for any fixed $u^\prime$, 
$(u_1+t,u^\prime)$ does not belong to $B_{jk}^{(+)}$ for $t\ne0$. 
The uniqueness of $u_1$ has been proved.  
\par
For $\mathcal{M}_{jk}^{(-)}$ we replace $g$ by $-g$ in the above setting 
and discuss in the same way. 
The claim for $\mathcal{L}_{jk}^{(\pm)}$ would be obvious. 
We omit the detail. 
\end{proof}
We denote by $\Omega_{jk}$ the subdomain of 
$\Sigma_j$ with boundary $B_{jk}^{(+)}{\cup}B_{jk}^{(-)}$. 
For any $y \in\Omega_{jk}$, 
the intersection of $\Pi_j(y)$ and $D_k$ is a section of $D_k$, 
and there exist different points $y_k^{(1)}, y_k^{(2)} \in \Pi_j(y)\cap\Sigma_k$ 
such that two lines connecting $y$ and $y_k(l)$, $l=1,2$ are tangential to $\Sigma_k$. 
Moreover we have $\Pi_j(y)\cap\overline{D_k}=\emptyset$ 
for any $y \in \Sigma_j\setminus\overline{\Omega_{jk}}$. 
\par
Note that $\mathcal{L}_{jk}$ is a union of a cone surface and a cylinder surface, 
or a union of two cone surfaces. 
These hypersurfaces are tangent to both $\Sigma_j$ and $\Sigma_k$. 
For example, on one hand, 
if $\Sigma_j$ and $\Sigma_k$ are spheres of the same size, 
then $\mathcal{L}_{jk}^{(+)}$ becomes a cylinder surface and 
$\mathcal{L}_{jk}^{(-)}$ becomes a cone surface. 
On the other hand, if $\Sigma_j$ and $\Sigma_k$ are spheres 
but their sizes are different, 
then both $\mathcal{L}_{jk}^{(\pm)}$ become cone surfaces. 
It is easy to see that $\mathcal{L}_{jk}=\mathcal{L}_{kj}$.  
\par
We now show the figure illustrating the above. 
\begin{center}
\includegraphics[width=100mm]{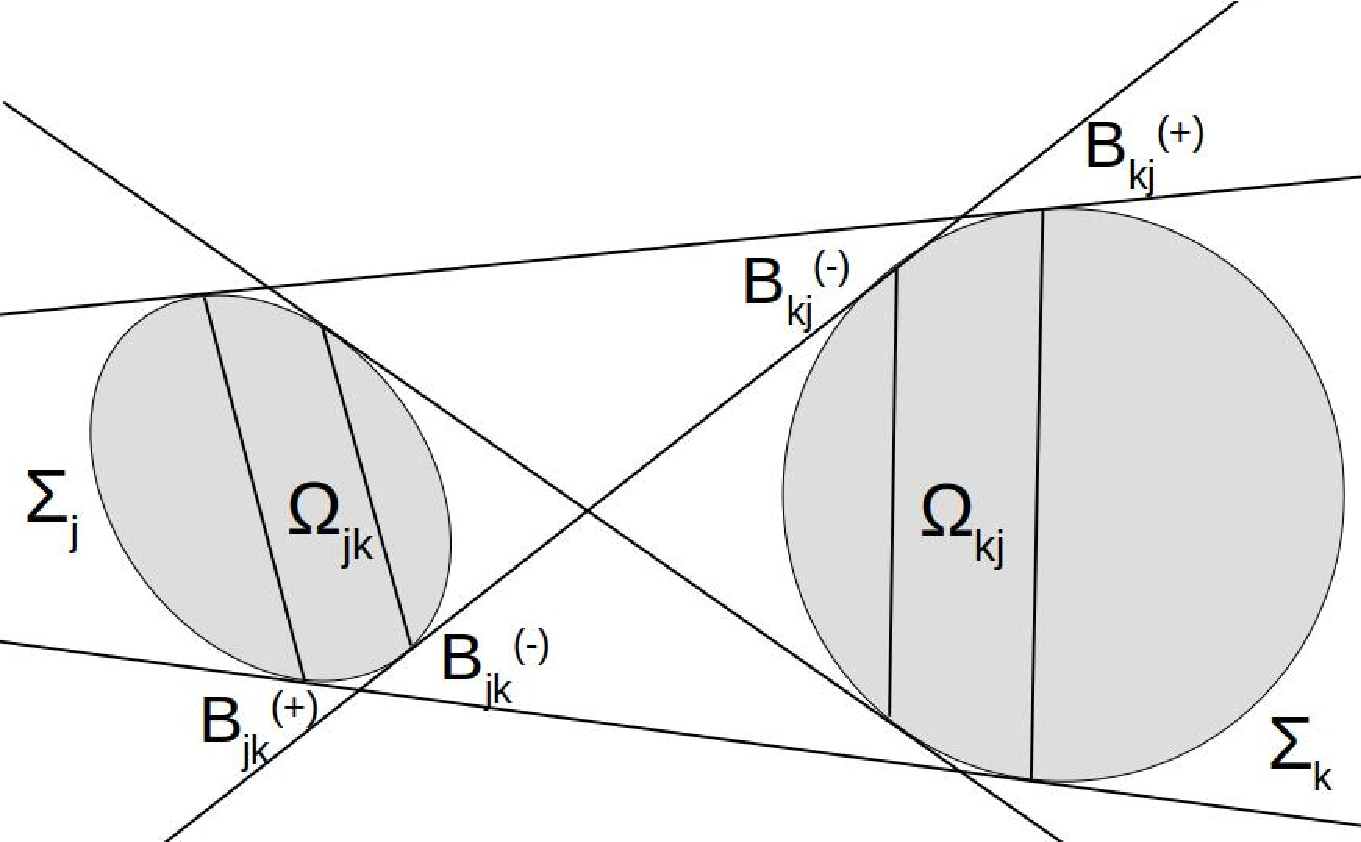}
\\
Figure~4. 
This illustrates the relationship between 
$\Sigma_j$, 
$\Sigma_k$, 
$B_{jk}^{(\pm)}$, 
$B_{kj}^{(\pm)}$, 
$\Omega_{jk}$ 
and 
$\Omega_{kj}$.
\end{center}
\vspace{11pt}
\par
We now show the basic results on the composition. 
\begin{lemma}
\label{theorem:canonical2}
\quad 
Compositions 
$(\Lambda_\phi^\prime)^\ast{\circ}N^\ast{S_j}$, 
$(\Lambda_\phi^\prime)^\ast{\circ}N^\ast{S_{jk}^{(1)}}$ 
and 
$(\Lambda_\phi^\prime)^\ast{\circ}N^\ast{S_{jk}^{(2)}}$ 
are clean with excesses $d(n-1-d)$, $d(n-1-d)$ and $d(n-2-d)$ respectively.  
They are conic Lagrangian submanifolds of $T^\ast\mathbb{R}^n\setminus0$ given by 
\begin{align*}
  (\Lambda_\phi^\prime)^\ast{\circ}N^\ast{S_j}
& =
  N^\ast\Sigma_j\setminus0, 
\\
  (\Lambda_\phi^\prime)^\ast{\circ}N^\ast{S_{jk}^{(1)}}
& =
  N^\ast\mathcal{L}_{jk}\setminus0, 
\\
  (\Lambda_\phi^\prime)^\ast{\circ}N^\ast{S_{jk}^{(2)}}
& =
  (N^\ast\Omega_{jk}\setminus0)\cup(N^\ast\Omega_{kj}\setminus0). 
\end{align*}
\end{lemma}
\begin{proof}
We express $(\Lambda_\phi^\prime)^\ast$ 
in $T^\ast\mathbb{R}^n{\times}T^\ast{G(d,n)}$ 
as 
\begin{align*}
  (\Lambda_\phi^\prime)^\ast
& =
  \Bigl\{
  \Bigl(
  (x^{\prime\prime}+t_1\omega_1+\dotsb+t_d\omega_d,\xi), 
  \bigl(\sigma,x^{\prime\prime},\xi(t_1,\dotsc,t_d,1)\bigr)
  \Bigr) :
\\
& \qquad
  (\sigma,x^{\prime\prime}) \in G(d,n), 
  \sigma=\langle\omega_1,\dotsc,\omega_d\rangle_\text{ON} \in G_{d,n},
\\
& \qquad  
  t_1,\dotsc,t_d\in\mathbb{R},\  
  \xi\in\sigma^\perp\setminus\{0\}
  \Bigr\}. 
\end{align*}
Let $t_0$, $t_j$ and $t_k$ be the same as in \eqref{equation:t0}. 
\par
We consider $(\Lambda_\phi^\prime)^\ast{\circ}N^\ast{S_j}$. 
Since 
\begin{align*}
  N^\ast{S_j}\setminus0
& =
  \bigl\{
  \bigl(
  (\sigma,y_j-\pi_\sigma{y_j},\eta_j(t_0\omega_0+t_j\omega_1,1)
  \bigr) : 
\\
& \qquad
  (y_j,\eta_j) \in N^\ast\Sigma_j\setminus0, 
  \sigma \in G_{d,n}\cap\eta_j^\perp
  \bigr\}, 
\end{align*}
we have 
\begin{align*}
  M_2
& := 
  \Bigl(T^\ast\mathbb{R}^n\times\Delta\bigl(T^\ast{G(d,n)}\bigr)\Bigr)
  \cap
  \bigl((\Lambda_\phi^\prime)^\ast{\times}N^\ast{S_j}\bigr) 
\\
& =
  \Bigl\{
  \Bigl(
  (y_j,\eta_j),
  \bigl(\sigma,y_j-\pi_\sigma{y_j},\eta_j(t_0\omega_0+t_j\omega_1,1)\bigr),
\\
& \qquad 
  \bigl(\sigma,y_j-\pi_\sigma{y_j},\eta_j(t_0\omega_0+t_j\omega_1,1)\bigr)
  \Bigr) : 
\\
& \qquad
  (y_j,\eta_j) \in N^\ast\Sigma_j\setminus0, 
  \sigma \in G_{d,n}\cap\eta_j^\perp  
  \Bigr\},  
\end{align*}
$$
(\Lambda_\phi^\prime)^\ast{\circ}N^\ast{S_j}
=
\{
(y,\eta): (y,\eta) \in N^\ast\Sigma_j\setminus0
\}
=
N^\ast\Sigma_j\setminus0,
$$ 
$$
\dim(M_2)
=
\dim(N^\ast\Sigma_j\setminus0)+\dim(G_{d,n-1})
=
n+d(n-1-d). 
$$
Then $(\Lambda_\phi^\prime)^\ast{\circ}N^\ast{S_j}$ is clean with excess $d(n-1-d)$ 
and a conic Lagrangian submanifold of $T^\ast\mathbb{R}^n\setminus0$. 
\par
We consider $(\Lambda_\phi^\prime)^\ast{\circ}N^\ast{S_{jk}^{(1)}}$. 
Lemma~\ref{theorem:conormal2} implies that 
\begin{align*}
  N^\ast{S_{jk}^{(1)}}\setminus0
& =
  \bigl\{
  \bigl(
  \sigma,y_j-\pi_\sigma{y_j}, 
  \eta_j(t_0\omega_0+t\omega_1)
  \bigr) : 
\\
& \qquad
  y_j \in B_{jk}, \eta_j \in N^\ast_{y_j}\Sigma_j\setminus0, 
  \sigma \in G_{d,n}\cap\eta_j^\perp, t\in\mathbb{R}
  \bigr\}. 
\end{align*}
Then we have 
\begin{align*}
  M_3
& := 
  \Bigl(T^\ast\mathbb{R}^n\times\Delta\bigl(T^\ast{G(d,n)}\bigr)\Bigr)
  \cap
  \bigl((\Lambda_\phi^\prime)^\ast{\times}N^\ast{S_{jk}^{(1)}}\bigr) 
\\
& =
  \Bigl\{
  \Bigl(
  (y_j+t\omega_1,\eta_j),
  \bigl(\sigma,y_j-\pi_\sigma{y_j},\eta_j(t_0\omega_0+t\omega_1,1)\bigr),
\\
& \qquad
  \bigl(\sigma,y_j-\pi_\sigma{y_j},\eta_j(t_0\omega_0+t\omega_1,1)\bigr)
  \Bigr) : 
\\
& \qquad
  y_j \in B_{jk}, \eta_j \in N_{y_j}^\ast\Sigma_j\setminus0, 
  \sigma \in G_{d,n}\cap\eta_j^\perp, t\in\mathbb{R}  
  \Bigr\},  
\end{align*}
\begin{align*}
  (\Lambda_\phi^\prime)^\ast{\times}N^\ast{S_{jk}^{(1)}}
& =
  \{
  (y_j+t(y_k-y_j),\eta_j): 
  (y_j,y_k) \in \mathcal{M}_{jk},\ t\in\mathbb{R}, 
  \eta_j \in N^\ast_{y_j}\Sigma_j\setminus0\}
\\
& =
  N^\ast\mathcal{L}_{jk}\setminus0. 
\end{align*}
Hence we obtain 
$$
\dim(M_3)
=
\dim(B_{jk})+2+\dim(G_{d,n-1})=n+d(n-1-d). 
$$
Thus $(\Lambda_\phi^\prime)^\ast{\circ}N^\ast{S_{jk}^{(1)}}$ 
is clean with excess $d(n-1-d)$ and 
a conic Lagrangian submanifold of $T^\ast\mathbb{R}^n\setminus0$. 
\par
We consider $(\Lambda_\phi^\prime)^\ast{\circ}N^\ast{S_{jk}^{(2)}}$. 
In this case $n \geqq d+2$ is assumed. 
Lemma~\ref{theorem:conormal2} implies that 
\begin{align*}
  N^\ast{S_{jk}^{(2)}}\setminus0
& =
  \bigl\{
  \bigl(
  \sigma,y_j-\pi_\sigma{y_j}, 
  \Xi
  \bigr) : 
  \Xi \in \langle\eta_j(\pi_\sigma{y_j},1),\eta_k(\pi_\sigma{y_k},1)\rangle,
\\
& \qquad
  y_j \in \Omega_{jk} \ \text{with some}\ y_k\in\Sigma_k \ \text{or}\ 
  y_k \in \Omega_{kj} \ \text{with some}\ y_j\in\Sigma_j, 
\\ 
& \qquad
  (y_j,\eta_j) \in N^\ast\Sigma_j\setminus0, 
  (y_k,\eta_k) \in N^\ast\Sigma_k\setminus0, 
  \sigma \in G_{d,n}\cap\eta_j^\perp\cap\eta_k^\perp
  \bigr\}. 
\end{align*}
Then we have 
\begin{align*}
  M_4
& := 
  \Bigl(T^\ast\mathbb{R}^n\times\Delta\bigl(T^\ast{G(d,n)}\bigr)\Bigr)
  \cap
  \bigl((\Lambda_\phi^\prime)^\ast{\times}N^\ast{S_{jk}^{(2)}}\bigr) 
\\
& =
  \Bigl\{
  \Bigl(
  (y,\eta),
  \bigl(\sigma,y_j-\pi_\sigma{y_j},\eta(\pi_\sigma{y},1)\bigr),
  \bigl(\sigma,y_j-\pi_\sigma{y_j},\eta(\pi_\sigma{y},1)\bigr)
  \Bigr) : 
\\
& \qquad
  (y,\eta)=(y_j,\eta_j) \in N^\ast\Sigma_j\setminus0,\ 
  y_j \in \Omega_{jk} \ \text{with some}\ y_k\in\Sigma_k \ \text{or}\ 
\\
& \qquad
  (y,\eta)=(y_k,\eta_k) \in N^\ast\Sigma_k\setminus0,\ 
  y_k \in \Omega_{kj} \ \text{with some}\ y_j\in\Sigma_, 
\\
& \qquad
  \sigma \in G_{d,n}\cap\eta_j^\perp\cap\eta_k^\perp
  \Bigr\},  
\end{align*}
\begin{align*}
  (\Lambda_\phi^\prime)^\ast{\circ}N^\ast{S_{jk}^{(2)}}
& =
  \{(y_j,\eta_j) \in N^\ast\Sigma_j\setminus0 : 
  y_j \in \Omega_{jk} \ \text{with some}\ y_k\in\Sigma_k \}
\\
& \ \cup
  \{(y_k,\eta_k) \in N^\ast\Sigma_k\setminus0 :  
  y_k \in \Omega_{kj} \ \text{with some}\ y_j\in\Sigma_j\}
\\
& =
  (N^\ast\Omega_{jk}\setminus0)\cup (N^\ast\Omega_{kj}\setminus0). 
\end{align*}
Hence we obtain 
$$
\dim(M_4)
=
\dim(N^\ast\Omega_{jk}\setminus0)+\dim(G_{d,n-2})=n+d(n-2-d).
$$ 
Thus $(\Lambda_\phi^\prime)^\ast{\circ}N^\ast{S_{jk}^{(2)}}$ 
is clean with excess $d(n-2-d)$ and 
a conic Lagrangian submanifold of $T^\ast\mathbb{R}^n\setminus0$. 
\end{proof} 
%
%
\section{Paired Lagrangian distributions and their products}
\label{section:lagrangian}
In the present section we study basic facts used for our quantitative analysis of the metal streaking artifacts. We begin with identifying the classes of conormal distributions to which $\chi_D$ and $\mathcal{R}_d\chi_D$ belong respectively. 
\begin{lemma}
\label{theorem:chid}
\begin{align*}
  \chi_{D_j} 
& \in 
  I^{-1/2-n/4}(N^\ast\Sigma_j), 
\\
  \mathcal{R}_d\chi_{D_j} 
& \in 
  I^{-(d+2)/2+1/2-N(d,n)/4}(N^\ast{S}_j)
  =
  I^{-(d+1)(n-d+2)/4}(N^\ast{S_j}). 
\end{align*}
Moreover the principal symbols of these conormal distributions do not vanish everywhere on the associating conormal bundles respectively. 
\end{lemma}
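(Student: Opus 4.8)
The plan is to handle the three assertions in order: first the conormal class of $\chi_{D_j}$, then that of $\mathcal{R}_d\chi_{D_j}$ via Theorem~\ref{theorem:canonicalrd} and Lemma~\ref{theorem:lemma1}, and finally the statements for the full boundary $\Sigma$ by summing the pieces. First I would verify $\chi_{D_j}\in I^{-1/2-n/2}(N^\ast\Sigma_j)$. Since $D_j$ is a bounded strictly convex smooth domain, $\Sigma_j=\partial D_j$ is a closed hypersurface, so $\operatorname{codim}\Sigma_j=1$ and $N=n$. Near any boundary point, choose local coordinates $x=(x',x'')\in\mathbb R\times\mathbb R^{n-1}$ with $\Sigma_j=\{x'=0\}$ and $D_j=\{x'>0\}$; then $\chi_{D_j}$ is locally the Heaviside function $H(x')$, exactly the half-space example from Section~\ref{section:micro}. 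Writing $H(x')=\int_{\mathbb R} e^{ix'\xi'}a(\xi')\,d\xi'$ with $a(\xi')\in S^0(\mathbb R)$ (the standard $a(\xi')=\tfrac1{2\pi}\big(\tfrac1{i(\xi'-i0)}+\pi\delta\big)$, or rather $a\sim$ const$/\xi'$ modulo smoothing), Proposition~\ref{theorem:characterizeconormal} with $k=1$, $N=n$ gives $\chi_{D_j}\in I^{m+1/2-n/4}$ for $m=0$, i.e.\ $I^{1/2-n/4}$; matching against the stated index $-1/2-n/2$ forces the bookkeeping $m+k/2-N/4$ with the correct sign convention, and the principal symbol is the nonvanishing function $1/i\xi'$ (up to constants), which is nowhere zero on $N^\ast\Sigma_j\setminus0$.

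Next, for $\mathcal{R}_d\chi_{D_j}$: by Theorem~\ref{theorem:canonicalrd}, $\mathcal R_d$ is an elliptic Fourier integral operator with kernel in $I^{-d(n-d+1)/4}\bigl(G(d,n)\times\mathbb R^n,\Lambda_\phi\bigr)$, associated to the canonical relation $\Lambda_\phi'$. By Lemma~\ref{theorem:lemma1}, $\Lambda_\phi'\circ N^\ast\Sigma_j=N^\ast S_j$ with $\operatorname{codim}S_j=1$ in $G(d,n)$, so the image of a conormal distribution under $\mathcal R_d$ is again conormal (this is the standard composition of an FIO with a conormal distribution whose wavefront meets the canonical relation transversally — here $\Lambda_\phi'$ is the graph of a canonical transformation, so the composition is automatically transversal and clean). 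I would invoke the Hörmander composition calculus: if $A\in I^{m_1}$ is an FIO with canonical relation the graph of $\chi$ and $u\in I^{m_2}(N^\ast Y)$ with $N^\ast Y$ meeting the relation cleanly, then $Au\in I^{m_1+m_2+e}(\chi(N^\ast Y))$ for an appropriate excess/dimension-shift $e$. The shift is dictated purely by dimension counting: one tracks the orders through the oscillatory-integral representations, using $\dim G(d,n)=N(d,n)=(d+1)(n-d)$ against $\dim\mathbb R^n=n$ and the fiber dimension $n-d$ of the phase $\phi$. Carrying out that count, the order of $\mathcal R_d\chi_{D_j}$ in $I^\bullet(N^\ast S_j)$ comes out to $-(d+2)/2+1/2-N(d,n)/4$, which one then simplifies to $-(d+1)(n-d+2)/4$ by substituting $N(d,n)=(d+1)(n-d)$ and combining terms. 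Ellipticity of $\mathcal R_d$ and nonvanishing of the principal symbol of $\chi_{D_j}$ give, via the product formula for principal symbols under FIO composition, that the principal symbol of $\mathcal R_d\chi_{D_j}$ is nonvanishing on $N^\ast S_j\setminus0$.

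Finally, the statements for $\chi_D$ and $\mathcal R_d\chi_D$ follow by additivity: since $D=D_1\sqcup\dots\sqcup D_J$ is a \emph{disjoint} union, $\chi_D=\sum_{j=1}^J\chi_{D_j}$ and the $\Sigma_j$ are pairwise disjoint closed hypersurfaces, so $N^\ast\Sigma=\bigsqcup_j N^\ast\Sigma_j$ (disjoint, hence a smooth conic Lagrangian), and a finite sum of conormal distributions of the same degree with disjoint singular supports lies in $I^m$ of the union; the principal symbol on each component $N^\ast\Sigma_j$ equals that of $\chi_{D_j}$, still nonvanishing. Applying $\mathcal R_d$ and the same reasoning (now with $S=\bigcup_j S_j$; the $S_j$ need not be disjoint, but on the dense open subset of each $S_j$ away from the intersections $S_{jk}$ the microlocal analysis is unaffected, and the principal symbol claim is a statement on $N^\ast S\setminus0$ read off componentwise) yields $\mathcal R_d\chi_D\in I^{-(d+2)/2+1/2-N(d,n)/4}(N^\ast S)$ with nonvanishing principal symbol. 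The main obstacle I anticipate is not any single step but pinning down the dimension-shift in the FIO composition precisely enough to land on the claimed orders: the Hörmander normalization of the order of a Lagrangian distribution ($I^m$ with the $N/4$ built in) makes the arithmetic delicate, and one must be careful that $\Lambda_\phi'$, being the graph of a canonical transformation, contributes the ``clean intersection with excess zero'' case so that the order simply adds (with the appropriate constant from the differing ambient dimensions $n$ versus $N(d,n)$ and the half-density/Maslov bookkeeping absorbed into the convention). Once that constant is fixed by a careful reading of \cite[Section~25.2]{Hoermander4} (or directly from the oscillatory integral in the proof of Theorem~\ref{theorem:canonicalrd}, since there the amplitude is literally constant), the simplification to $-(d+1)(n-d+2)/4$ is routine algebra.
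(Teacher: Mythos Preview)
Your approach is essentially the same as the paper's: flatten $\Sigma_j$ locally to a half-space, read off the symbol order, then push forward by the elliptic FIO $\mathcal{R}_d$ using Theorem~\ref{theorem:canonicalrd} and Lemma~\ref{theorem:lemma1} so that orders add. One slip to correct: the Heaviside amplitude $a(\xi')\sim\text{const}/\xi'$ lies in $S^{-1}$, not $S^0$, so in Proposition~\ref{theorem:characterizeconormal} you should take $m=-1$, giving $\chi_{D_j}\in I^{-1+1/2-n/4}(N^\ast\Sigma_j)=I^{-1/2-n/4}(N^\ast\Sigma_j)$ --- this is exactly what the paper's proof computes, and the index $-1/2-n/2$ in the lemma's \emph{statement} is a typo (the subsequent FIO arithmetic in the paper is consistent only with $-1/2-n/4$). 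With $m=-1$ in hand, the composition order is simply $\bigl(-\tfrac{d(n-d+1)}{4}\bigr)+\bigl(-\tfrac12-\tfrac{n}{4}\bigr)=-(d+1)(n-d+2)/4$, and your worry about an extra ``excess'' term is unfounded here since $\Lambda_\phi'$ is locally a canonical graph.
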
 
\begin{proof}[{\bf Proof}]
It is easy to see that 
$$
\chi_{D_j} 
\in 
I^{-1+1/2-n/4}(N^\ast\Sigma_j)
=
I^{-1/2-n/4}(N^\ast\Sigma_j). 
$$
Indeed if we choose appropriate local coordinates in $x\in\mathbb{R}^n$, then 
$\chi_{D_j}$ can be expressed as a characteristic function of a half space 
$\{(0,x_2,\dotsc,x_n) : x_2,\dotsc,x_n\in\mathbb{R}\}$ and 
$$
\chi_{D_j}(x)
=
\int_{\mathbb{R}}
e^{ix_1\xi_1}
a(x_2,\dotsc,x_n,\xi_1)
d\xi_1
$$
locally near $(x_1,x_2,\dotsc,x_n)=(0,0,\dotsc,0)$ 
with some amplitude 
$$
a(x_2,\dotsc,x_n,\xi_1)
\sim
\frac{1}{\xi_1}
\in 
S^{-1}\bigl(\mathbb{R}^{n-1}\times(\mathbb{R}\setminus\{0\})\bigr). 
$$
It is also easy to see that the principal symbol of $a(x_2,\dotsc,x_n,\xi_1)$ does not vanish. 
Combining this, Lemma~\ref{theorem:hoermander2523}, 
Theorem~\ref{theorem:canonicalrd} and Lemma~\ref{theorem:lemma1}, 
we deduce that 
$$
\mathcal{R}_d\chi_{D_j}
\in 
I^{-(N(d,n)+n)/4+(n-d)/2-1/2-n/4}(\Lambda_\phi^\prime{\circ}N^\ast\Sigma_j) 
=
I^{-(d+2)/2+1/2-N(d,n)/4}(N^\ast{S_j}), 
$$
and the principal symbol of $\mathcal{R}_d\chi_{D_j}$ does not vanish since the principal symbol of $\chi_{D_j}$ does not vanish and $\mathcal{R}_d$ is an elliptic Fourier integral operator. This completes the proof. 
\end{proof}
In the present section we concentrate on the analysis of 
$$
(\mathcal{R}_d\chi_D)^2
=
\sum_{j=1}^J
(\mathcal{R}_d\chi_{D_j})^2
+
2
\sum_{1 \leqq j < k \leqq J}
\mathcal{R}_d\chi_{D_j} \cdot \mathcal{R}_d\chi_{D_k}. 
$$
So we need to study 
$(\mathcal{R}_d\chi_{D_j})^2$ and $\mathcal{R}_d\chi_{D_j} \cdot \mathcal{R}_d\chi_{D_k}$ with $j{\ne}k$. We begin with the analysis of $(\mathcal{R}_d\chi_{D_j})^2$. This will be done by symbolic calculus of conormal distributions. In general we have the following. 
\begin{lemma}
\label{theorem:lemma32}
If $u, v \in I^{-(d+1)(n-d+2)/4}(N^\ast{S_j})$, then $uv \in I^{-(d+1)(n-d+2)/4}(N^\ast{S_j})$.
\end{lemma}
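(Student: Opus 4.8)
The plan is to prove this purely by symbolic calculus of conormal distributions, in the spirit of H\"ormander \cite[Section~18.2]{Hoermander3}. Since $S_j$ is a smooth hypersurface in $G(d,n)$ (Lemma~\ref{theorem:lemma1}), it has codimension $k=1$. First I would fix an arbitrary point of $S_j$ and choose local coordinates $z=(z_1,z^{\prime\prime})\in\mathbb{R}\times\mathbb{R}^{N(d,n)-1}$ on $G(d,n)$ in which $S_j=\{z_1=0\}$. By Proposition~\ref{theorem:characterizeconormal}, writing the order as $-(d+1)(n-d+2)/4 = m + 1/2 - N(d,n)/4$, we solve $m = -(d+1)(n-d+2)/4 - 1/2 + N(d,n)/4 = -(d+2)/2$, so $u$ and $v$ admit oscillatory-integral representations
\begin{equation*}
u(z)=\int_{\mathbb{R}}e^{iz_1\zeta_1}a(z^{\prime\prime},\zeta_1)\,d\zeta_1,
\qquad
v(z)=\int_{\mathbb{R}}e^{iz_1\eta_1}b(z^{\prime\prime},\eta_1)\,d\eta_1
\end{equation*}
near that point, with $a,b\in S^{-(d+2)/2}(\mathbb{R}^{N(d,n)-1}\times\mathbb{R})$.

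Next I would form the product and localize: multiplying the two integrals gives
\begin{equation*}
u(z)v(z)=\iint_{\mathbb{R}^2}e^{iz_1(\zeta_1+\eta_1)}a(z^{\prime\prime},\zeta_1)b(z^{\prime\prime},\eta_1)\,d\zeta_1\,d\eta_1,
\end{equation*}
and then change variables $\tau=\zeta_1+\eta_1$ (keeping $\eta_1$ as the fibre variable to be integrated out) to write $uv(z)=\int_{\mathbb{R}}e^{iz_1\tau}c(z^{\prime\prime},\tau)\,d\tau$ with
\begin{equation*}
c(z^{\prime\prime},\tau)=\int_{\mathbb{R}}a(z^{\prime\prime},\tau-\eta_1)b(z^{\prime\prime},\eta_1)\,d\eta_1.
\end{equation*}
The whole content of the lemma is then the claim that $c\in S^{m^\prime}(\mathbb{R}^{N(d,n)-1}\times\mathbb{R})$ with $m^\prime = 2m + 1 = -(d+2)+1 = -(d+1)$, because substituting back into Proposition~\ref{theorem:characterizeconormal} with this order gives $m^\prime + 1/2 - N(d,n)/4 = -(d+1) + 1/2 - N(d,n)/4$; and since $-(d+1)(n-d+2)/4 = -(d+1)(n-d)/4 - (d+1)/2$ while $-(d+1) + 1/2 - N(d,n)/4 = -(d+1) + 1/2 - (d+1)(n-d)/4$, these two orders agree precisely when $-(d+1)/2 = -(d+1)+1/2$, i.e.\ $d=1$; so in general one should instead track that multiplication of two conormal distributions of order $\mu$ with respect to a hypersurface produces one of order $2\mu + k/2 = 2\mu + 1/2$, and $2\cdot(-(d+1)(n-d+2)/4) + 1/2$ need not equal $-(d+1)(n-d+2)/4$ — so the correct route is the following bound.

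The key estimate is a symbol-gain estimate for the convolution $c$: if $a,b\in S^{m}(\mathbb{R}^{N-1}\times\mathbb{R})$ with $m<-1$ in the fibre variable (here $m=-(d+2)/2\le -3/2<-1$), then $\int a(z^{\prime\prime},\tau-\eta_1)b(z^{\prime\prime},\eta_1)\,d\eta_1\in S^{m}(\mathbb{R}^{N-1}\times\mathbb{R})$ — \emph{not} $S^{2m+1}$ — because the Peetre-type inequality $\langle\tau-\eta_1\rangle^{m}\langle\eta_1\rangle^{m}\le C\langle\tau\rangle^{m}\langle\eta_1\rangle^{m-|m|}$ for $m<0$ lets one integrate out $\eta_1$ at the cost of only an $S^0$ loss, and the resulting order is $m$ (the integral $\int\langle\eta_1\rangle^{2m}d\eta_1$ converges since $2m<-1$). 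Differentiating under the integral sign handles the full symbol seminorms. This is exactly the phenomenon by which $I^{\mu}$ classes with respect to a \emph{hypersurface} (codimension one) are closed under multiplication once $\mu$ is below the critical threshold — here $\mu=-(d+1)(n-d+2)/4$ is well below that threshold because $n-d+2\ge 3$. I would therefore state and prove this convolution estimate as the core step, then patch the local representations together with a partition of unity, checking that $\mathrm{WF}(uv)\subset N^\ast S_j$ (immediate since $\mathrm{singsupp}(uv)\subset \mathrm{singsupp}\,u\cup\mathrm{singsupp}\,v\subset S_j$ and the microlocal directions are conormal). The main obstacle is precisely getting the bookkeeping of orders right — making sure the convolution lands in $S^{m}$ rather than $S^{2m+1}$, which is what makes the class \emph{stable} under products rather than merely closed under products up to a shift — and verifying the threshold $m=-(d+2)/2<-1/2$ (equivalently $d\ge 0$, which always holds) is what licenses the integrability.
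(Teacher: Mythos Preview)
Your overall strategy matches the paper's exactly: flatten $S_j$ in local coordinates, write $u$ and $v$ as one-variable oscillatory integrals with amplitudes $a,b\in S^{-(d+2)/2}$, and show that the fibre convolution $c(z'',\tau)=\int a(z'',\tau-\eta_1)b(z'',\eta_1)\,d\eta_1$ remains in $S^{-(d+2)/2}$. The confusing detour through $m'=2m+1$ is unnecessary and should simply be deleted; the target order for $c$ is $m=-(d+2)/2$, nothing else.

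There are, however, two genuine gaps in the execution. First, the ``Peetre-type inequality'' you write down, $\langle\tau-\eta_1\rangle^{m}\langle\eta_1\rangle^{m}\le C\langle\tau\rangle^{m}\langle\eta_1\rangle^{2m}$, is false: set $\eta_1=\tau$ and the left side is $\langle\tau\rangle^{m}$ while the right side is $C\langle\tau\rangle^{3m}$, which is much smaller for large $\tau$ and $m<0$. The actual Peetre bound only gives $\langle\tau-\eta_1\rangle^{m}\langle\eta_1\rangle^{m}\le C\langle\tau\rangle^{m}$, with no decay left in $\eta_1$, so it does not by itself yield integrability. Second, ``differentiating under the integral sign handles the full symbol seminorms'' is where the real work hides and your sketch does not do it: applying $\partial_\tau^\alpha$ to $a(z'',\tau-\eta_1)$ and then estimating on the region $\lvert\eta_1\rvert\gtrsim\lvert\tau\rvert$ gives only $O(\langle\tau\rangle^{m})$, not the required $O(\langle\tau\rangle^{m-\alpha})$. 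The paper fixes both issues simultaneously by introducing a smooth cutoff $\Psi_1(\xi_1,\eta_1)=\psi(\langle\xi_1-\eta_1\rangle/\langle\xi_1\rangle)$ that separates the region $\langle\xi_1-\eta_1\rangle\gtrsim\langle\xi_1\rangle$ (where the $\partial_\xi$-derivatives stay on $a$ and its decay carries the day) from the region $\langle\eta_1\rangle\sim\langle\xi_1\rangle$ (where one writes $\partial_{\xi_1}a=-\partial_{\eta_1}a$ and integrates by parts to transfer the derivatives to $\Psi_1 b$, using that $\partial_{\eta_1}^{\alpha}\Psi_1=O(\langle\xi_1\rangle^{-\alpha})$ and $\langle\eta_1\rangle\sim\langle\xi_1\rangle$). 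That splitting-plus-integration-by-parts is the missing core step; once you insert it, your argument becomes the paper's proof.
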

\begin{proof}[{\bf Proof}]
The outline of the proof of Lemma~\ref{theorem:lemma32} is similar to that of \cite[Lemma~4.1]{PalaciosUhlmannWang}. However we introduce some cut-off functions for the frequency space and obtain fine evaluation. Suppose that $u, v \in I^{-(d+1)(n-d+2)/4}(N^\ast{S_j})$. 
Fix arbitrary $x_0 \in S_j$ and consider the product $uv$ near $x_0$. Note that the codimension of $S_j$ is one and the order of the amplitudes of $u$ and $v$ is 
$$
-
\frac{(d+1)(n-d+2)}{4}
-
\frac{1}{2}
+
\frac{N(d,n)}{4}
=
-\frac{d}{2}-1. 
$$
Then there exist amplitudes 
$a(x,\xi_1), b(x,\xi_1) \in S^{-d/2-1}(\mathbb{R}^{N(d,n)}\times\mathbb{R})$ which are compactly supported in $x$ near $x_0$ such that $u(x)$ and $v(x)$ are given by 
$$
u(x)
=
\int_{\mathbb{R}}
e^{ix_1\xi_1}
a(x,\xi_1)
d\xi_1,
\quad
v(x)
=
\int_{\mathbb{R}}
e^{ix_1\xi_1}
b(x,\xi_1)
d\xi_1 
$$
near $x=x_0$ respectively. 
Hence we have the explicit formula of the product $uv$ as 
$$
u(x)v(x)
=
\int_{\mathbb{R}}
\int_{\mathbb{R}}
e^{ix_1(\xi_1+\eta_1)}
a(x,\xi_1)b(x,\eta_1)
d\xi_1
d\eta_1
=
\int_{\mathbb{R}}
e^{ix_1\xi_1}
c_1(x,\xi_1)
d\xi_1,
$$
$$
c_1(x,\xi_1)
=
\int_{\mathbb{R}}
a(x,\xi_1-\eta_1)
b(x,\eta_1)
d\eta_1
$$
near $x=x_0$. 
It is easy to see that $c_1(x,\xi_1)$ is compactly supported in $x$ near $x_0$. It suffices to show that $c_1(x,\xi_1) \in S^{-d/2-1}(\mathbb{R}^{N(d,n)}\times\mathbb{R})$. For this purpose we split $\eta_1$-space into two parts by using cut-off functions. Pick up $\psi(t) \in C^\infty_0(\mathbb{R})$ such that 
$$
0\leqq\psi(t)\leqq1, 
\quad
\psi(t)
\equiv
\begin{cases}
1 &\ (\lvert{t}\rvert\leqq1/2),
\\
0 &\ (\lvert{t}\rvert\geqq3/4),
\end{cases}
\quad
\psi(t)>0 
\quad
(\lvert{t}\rvert<3/4).
$$
Set $\Psi_1(\xi_1,\eta_1):=\psi(\langle\xi_1-\eta_1\rangle/\langle\xi_1\rangle)$. Then the properties of $\Psi_1(\xi_1,\eta_1)$ are the following. 
\begin{itemize}
\item[(i)] 
$\Psi_1(\xi_1,\eta_1)>0$ is equivalent to $\langle\xi_1-\eta_1\rangle\leqq3\langle\xi_1\rangle/4$, which implies $C_1^{-1}\langle\xi_1\rangle \leqq \langle\eta_1\rangle \leqq C_1\langle\xi_1\rangle$ with some $C_1>1$. Indeed $\langle\xi_1-\eta_1\rangle\leqq3\langle\xi_1\rangle/4$ implies that $\lvert\xi_1-\eta_1\rvert\leqq3\lvert\xi_1\rvert/4$ since 
$$
1+\lvert\xi_1-\eta_1\rvert^2
\leqq
\frac{9}{16}+\frac{9\lvert\xi_1\rvert^2}{16}
<
1+\frac{9\lvert\xi_1\rvert^2}{16}. 
$$
Applying this to $\pm(\lvert\xi_1\rvert-\lvert\eta_1\rvert)\leqq\lvert\xi_1-\eta_1\rvert$, we have $\lvert\xi_1\rvert/4 \leqq \lvert\eta_1\rvert \leqq 7\lvert\xi_1\rvert/4$. 
\item[(ii)] 
$1-\Psi_1(\xi_1,\eta_1)>0$ is equivalent to $\langle\xi_1-\eta_1\rangle>\langle\xi_1\rangle/2$.
\item[(iii)] 
The results of (i) and (ii) implies that 
$\operatorname{supp}\psi^\prime(\langle\xi_1-\eta_1\rangle/\langle\xi_1\rangle) \subset E_1$, 
where 
$$
E_1
=
\{
(\xi_1,\eta_1)\in\mathbb{R}^2 
: 
\langle\xi_1\rangle/2\leqq\langle\xi_1-\eta_1\rangle\leqq3\langle\xi_1\rangle/4
\}.
$$
In particular for $(\xi_1,\eta_1) \in E_1$
\begin{equation}
C_1^{-1}\langle\eta_1\rangle/2 \leqq \langle\xi_1\rangle/2 \leqq \langle\xi_1-\eta_1\rangle \leqq 3\langle\xi\rangle/4 \leqq 3C_1\langle\eta_1\rangle/4.
\label{equation:equivalence1}
\end{equation}
\item[(iv)] 
Let $\alpha$ and $\beta$ be non-negative integers with $\alpha+\beta>0$. 
By using the chain rule of differentiation and \eqref{equation:equivalence1}, we have 
\begin{align*}
  \partial_{\xi_1}^\alpha\partial_{\eta_1}^\beta\Psi_1(\xi_1,\eta_1)
& =
  \sum_{l=1}^{\alpha+\beta}
  \sum_{\substack{\alpha_1+\dotsb+\alpha_l=\alpha \\ \beta_1+\dotsb+\beta_l=\beta}}
  C^{\alpha_1,\dotsc,\alpha_l}_{\beta_1,\dotsc,\beta_l}
  \cdot 
  \psi^{(l)}\left(\frac{\langle\xi_1-\eta_1\rangle}{\langle\xi_1\rangle}\right)
\\
& \times
  \prod_{m=1}^l
  \partial_{\xi_1}^{\alpha_m}
  \partial_{\eta_1}^{\beta_m}
  \left(\frac{\langle\xi_1-\eta_1\rangle}{\langle\xi_1\rangle}\right)
\\
& =
  \mathcal{O}(\langle\xi_1\rangle^{-(\alpha+\beta)}),
\end{align*}
where $C^{\alpha_1,\dotsc,\alpha_l}_{\beta_1,\dotsc,\beta_l}$, 
with $\alpha_1+\dotsb+\alpha_l=\alpha$ and $\beta_1+\dotsb+\beta_l=\beta$ 
are some constants. Recall that $0\leqq\Psi_1(\xi_1,\eta_1)\leqq1$. 
Then we deduce that $\partial_{\xi_1}^\alpha\partial_{\eta_1}^\beta\Psi_1(\xi_1,\eta_1)=\mathcal{O}(\langle\xi\rangle^{-(\alpha+\beta)})$ for all non-negative integers $\alpha$ and $\beta$. 
\end{itemize}
\par
We make use of 
the Leibniz formula,  
$\partial_{\xi_1}^\alpha a(x,\xi_1-\eta_1)=(-1)^\alpha\partial_{\eta_1}^\alpha a(x,\xi_1-\eta_1)$, integration by parts and the properties of $\Psi_1(\xi_1,\eta_1)$ in the order, and confirm that $\partial_x^\gamma\partial_{\xi_1}^\alpha c_1(x,\xi_1)=\mathcal{O}(\langle\xi_1\rangle^{-d/2-1-\alpha})$. 
We deduce that  
\begin{align}
  \partial_x^\gamma\partial_{\xi_1}^\alpha c_1(x,\xi_1)
& =
  \sum_{\gamma_1+\gamma_2=\gamma}
  \frac{\gamma!}{\gamma_1!\gamma_2!}
  \int_{\mathbb{R}}
  \partial_x^{\gamma_1}\partial_{\xi_1}^\alpha a(x,\xi_1-\eta_1)
  \partial_x^{\gamma_2}b(x,\eta_1)
  d\eta_1
\nonumber
\\
& = 
  F_1+G_1,
\label{equation:split100}
\\
  F_1
& =
  \sum_{\gamma_1+\gamma_2=\gamma}
  \frac{\gamma!}{\gamma_1!\gamma_2!}
  \int_{\mathbb{R}}
  \bigl(1-\Psi_1(\xi_1,\eta_1)\bigr)
  \partial_x^{\gamma_1}\partial_{\xi_1}^\alpha a(x,\xi_1-\eta_1)
  \partial_x^{\gamma_2}b(x,\eta_1)
  d\eta_1,
\nonumber
\\
  G_1
& =
  \sum_{\gamma_1+\gamma_2=\gamma}
  \frac{\gamma!}{\gamma_1!\gamma_2!}
  \int_{\mathbb{R}}
  \Psi_1(\xi_1,\eta_1)
  \partial_x^{\gamma_1}(-\partial_{\eta_1})^\alpha a(x,\xi_1-\eta_1)
  \partial_x^{\gamma_2}b(x,\eta_1)
  d\eta_1.
\nonumber
\end{align}
The region of the integration $F_1$ satisfies $\langle\xi_1-\eta_1\rangle>\langle\xi_1\rangle/2$. 
We deduce that 
\begin{align}
  F_1
& =
  \int_{\langle\xi_1-\eta_1\rangle>\langle\xi_1\rangle/2}
  \mathcal{O}(\langle\xi_1-\eta_1\rangle^{-d/2-1-\alpha}\langle\eta_1\rangle^{-d/2-1})
  d\eta_1
\nonumber
\\
& =
  \int_{\langle\xi_1-\eta_1\rangle>\langle\xi_1\rangle/2}
  \mathcal{O}(\langle\xi_1\rangle^{-d/2-1-\alpha}\langle\eta_1\rangle^{-d/2-1})
  d\eta_1
\nonumber
\\
& =
  \mathcal{O}(\langle\xi_1\rangle^{-d/2-1-\alpha})
\label{equation:split101}
\end{align}
since $\langle\eta_1\rangle^{-d/2-1}$ is integrable in $\eta_1$ on $\mathbb{R}$. 
\par
The region of the integration $G_1$ satisfies 
$C_1^{-1}\langle\xi_1\rangle \leqq \langle\eta_1\rangle \leqq C_1\langle\xi_1\rangle$. 
We deduce that 
\begin{align}
  G_1
& =
  \sum_{\alpha_1+\alpha_2=\alpha}
  \frac{\alpha!}{\alpha_1!\alpha_2!}
  \sum_{\gamma_1+\gamma_2=\gamma}
  \frac{\gamma!}{\gamma_1!\gamma_2!}
\nonumber
\\
& \times
  \int_{\mathbb{R}}
  \partial_{\eta_1}^{\alpha_1}\Psi_1(\xi_1,\eta_1)
  \cdot
  \partial_x^{\gamma_1}a(x,\xi_1-\eta_1)
  \cdot
  \partial_x^{\gamma_2}\partial_{\eta_1}^{\alpha_2}b(x,\eta_1)
  d\eta_1
\nonumber
\\
& =
  \sum_{\alpha_1+\alpha_2=\alpha}
  \int_{C_1^{-1}\langle\xi_1\rangle \leqq \langle\eta_1\rangle \leqq C_1\langle\xi_1\rangle}
\nonumber
\\
& \qquad\qquad\qquad\times
  \mathcal{O}(\langle{\xi_1}\rangle^{-\alpha_1}\langle\xi_1-\eta_1\rangle^{-d/2-1}\langle\eta_1\rangle^{-d/2-1-\alpha_2})
  d\eta_1
\nonumber
\\
& =
  \int_{\mathbb{R}}
  \mathcal{O}(\langle\xi_1-\eta_1\rangle^{-d/2-1}\langle\xi_1\rangle^{-d/2-1-\alpha})
  d\eta_1
\nonumber
\\
& =
  \mathcal{O}(\langle\xi_1\rangle^{-d/2-1-\alpha})
\label{equation:split102}
\end{align}
since $\langle\xi_1-\eta_1\rangle^{-d/2-1}$ is integrable in $\eta_1$ on $\mathbb{R}$. 
\par
Substitute 
\eqref{equation:split101} 
and 
\eqref{equation:split102} 
into 
\eqref{equation:split100}. 
We obtain 
$\partial_x^\gamma\partial_{\xi_1}^\alpha c_1(x,\xi_1)=\mathcal{O}(\langle\xi_1\rangle^{-d/2-1-\alpha})$. 
This completes the proof. 
\end{proof} 
Next we study the microlocal singularity of $\mathcal{R}_d\chi_{D_j}\cdot\mathcal{R}_d\chi_{D_k}$ on $S_{jk}$. For this purpose we here introduce some classes of distributions. They are called paired Lagrangian distributions. See \cite{MelroseUhlmann}, \cite{GuilleminUhlmann} and \cite{GreenleafUhlmann}. 
\begin{definition}
\label{theorem:paired1} 
Let $X$ be a smooth manifold, and let $\mu, \nu \in \mathbb{R}$. 
Suppose that $\Lambda_0$ and $\Lambda_1$ are cleanly intersecting conic Lagrangian submanifold of $T^\ast{X}\setminus0$. We define a class of distributions $I^{\mu,\nu}(\Lambda_0,\Lambda_1)$ on $X$ which is said to be a paired Lagrangian distribution associated to the pair $(\Lambda_0,\Lambda)$ of order $\mu,\nu$ as follows. We say that $u \in I^{\mu,\nu}(\Lambda_0,\Lambda_1)$ if $u \in \mathcal{D}^\prime(X)$, $\operatorname{WF}(u) \subset \Lambda_0\cup\Lambda_1$ and microlocally away from $\Lambda_0\cap\Lambda_1$ 
$$
I^{\mu,\nu}(\Lambda_0,\Lambda_1)
\subset
I^{\mu+\nu}(\Lambda_0\setminus\Lambda_1),
\quad
I^{\mu,\nu}(\Lambda_0,\Lambda_1)
\subset
I^{\mu}(\Lambda_1). 
$$
\end{definition}
It is known that paired Lagrangian distributions associated to a pair of two conormal bundles can be characterized as oscillatory integrals as follows. Let $X$ be an $N$-dimensional smooth manifold, and let $Y$ and $Z$ be transversely intersecting submanifolds of $X$. 
It follows that $N^\ast{Y}$ and $N^\ast(Y{\cap}Z)$ intersect cleanly. 
Set $\operatorname{codim}{Y}=k$ and $\operatorname{codim}(Y{\cap}Z)=k+l$. 
Fix arbitrary $p_0 \in X$, and choose local coordinates $x=(x_1,\dotsc,x_N)=(x^\prime,x^{\prime\prime},x^{\prime\prime\prime})\in\mathbb{R}^k\times\mathbb{R}^l\times\mathbb{R}^{N-k-l}$ such that $x(p_0)=0$ and 
\begin{align*}
  Y
& =
  \{x=(x^\prime,x^{\prime\prime},x^{\prime\prime\prime}) : x^\prime=(x_1,\dotsc,x_k)=0\},
\\
  Y{\cap}Z
& =
  \{x=(x^\prime,x^{\prime\prime},x^{\prime\prime\prime}) : x^\prime=(x_1,\dotsc,x_k)=0, x^{\prime\prime}=(x_{k+1},\dotsc,x_{k+l})=0\}
\end{align*}
near $p_0$. It is known that $u \in I^{\mu,\nu}(N^\ast(Y{\cap}Z),N^\ast{Y})$ if and only if there exists a symbol $a(x,\xi^\prime,\xi^{\prime\prime}) \in S^{\mu-k/2+N/4,\nu-l/2}\bigl(\mathbb{R}^N\times(\mathbb{R}^k\setminus\{0\})\times\mathbb{R}^l\bigr)$ such that 
$$
u(x)
=
\iint_{\mathbb{R}^k\times\mathbb{R}^l}
e^{i(x^\prime\cdot\xi^\prime+x^{\prime\prime}\cdot\xi^{\prime\prime})}
a(x,\xi^\prime,\xi^{\prime\prime})
d\xi^\prime 
d\xi^{\prime\prime}
$$
near $x=0$. 
Here 
$S^{m,m^\prime}\bigl(\mathbb{R}^N\times(\mathbb{R}^k\setminus\{0\})\times\mathbb{R}^l\bigr)$ 
is the set of all smooth functions $a(x,\xi,\eta)$ on $\mathbb{R}^N\times(\mathbb{R}^k\setminus\{0\})\times\mathbb{R}^l$ with the following conditions: for any compact set $K$ in $\mathbb{R}^N$ and for any multi-indices $\alpha$, $\beta$, and $\gamma$ there exists a constant $C(K,\alpha,\beta,\gamma)>0$ such that 
$$
\lvert\partial_x^\gamma\partial_\xi^\alpha\partial_\eta^\beta a(x,\xi,\eta)\rvert
\leqq
C(K,\alpha,\beta,\gamma)
\langle\xi;\eta\rangle^{m-\lvert\alpha\rvert}
\langle\eta\rangle^{m^\prime-\lvert\beta\rvert}
$$
for $(x,\xi,\eta) \in K\times(\mathbb{R}^k\setminus\{0\})\times\mathbb{R}^l$. 
\par
We compute the microlocal singularity of 
$\mathcal{R}_d\chi_{D_j}\cdot\mathcal{R}_d\chi_{D_k}$ near $S_{jk}$. 
For this purpose we make full use of the following strong results due to Greenleaf and Uhlmann in \cite{GreenleafUhlmann}. 
\begin{lemma}[{\cite[Lemma~1.1]{GreenleafUhlmann}}]
\label{theorem:gu}
Let $X$ be an $N$-dimensional smooth manifold, and let $Y$ and $Z$ be transversely intersecting smooth submanifolds of $X$ with $\operatorname{codim}(Y)=k_1$ and $\operatorname{codim}(Z)=k_2$ respectively. Set $l_1=\operatorname{codim}(Y{\cap}Z)-k_1$ and $l_2=\operatorname{codim}(Y{\cap}Z)-k_2$. Let $m_1$ and $m_2$ be real numbers. Then we have 
\begin{align*}
&  I^{m_1+k_1/2-N/4}(N^\ast{Y}) \cdot I^{m_2+k_2/2-N/4}(N^\ast{Z})
\\
& \qquad \subset 
  I^{m_1+k_1/2-N/4,m_2+l_1/2}(N^\ast(Y{\cap}Z),N^\ast{Y})
\\
& \qquad +
  I^{m_2+k_2/2-N/4,m_1+l_2/2}(N^\ast(Y{\cap}Z),N^\ast{Z}).
\end{align*}
\end{lemma}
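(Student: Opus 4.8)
The statement is Lemma~1.1 of \cite{GreenleafUhlmann}, and the plan is to reduce it to a symbolic calculation; everything is local, since all the classes involved are defined by localization. First I would fix a point $p_0 \in X$ and, using the transversality of $Y$ and $Z$, choose local coordinates $x=(x^\prime,x^{\prime\prime},x^{\prime\prime\prime})\in\mathbb{R}^{k_1}\times\mathbb{R}^{k_2}\times\mathbb{R}^{N-k_1-k_2}$ with $x(p_0)=0$, $Y=\{x^\prime=0\}$ and $Z=\{x^{\prime\prime}=0\}$, so that $Y\cap Z=\{x^\prime=x^{\prime\prime}=0\}$ has codimension $k_1+k_2$ and hence $l_1=k_2$, $l_2=k_1$. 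Using Proposition~\ref{theorem:characterizeconormal} together with the remark reducing \eqref{equation:oscillation2} to \eqref{equation:oscillation1} (so that the amplitudes may be taken to depend on the full variable $x$), near $p_0$ I would write
$$
u(x)=\int_{\mathbb{R}^{k_1}}e^{ix^\prime\cdot\xi}a(x,\xi)\,d\xi,
\qquad
v(x)=\int_{\mathbb{R}^{k_2}}e^{ix^{\prime\prime}\cdot\eta}b(x,\eta)\,d\eta,
$$
with $a\in S^{m_1}(\mathbb{R}^N\times\mathbb{R}^{k_1})$, $b\in S^{m_2}(\mathbb{R}^N\times\mathbb{R}^{k_2})$, both compactly supported in $x$, so that
$$
u(x)v(x)=\iint_{\mathbb{R}^{k_1}\times\mathbb{R}^{k_2}}e^{i(x^\prime\cdot\xi+x^{\prime\prime}\cdot\eta)}a(x,\xi)b(x,\eta)\,d\xi\,d\eta.
$$
This already has the oscillatory form that characterizes paired Lagrangian distributions attached to $\bigl(N^\ast(Y\cap Z),N^\ast Y\bigr)$ and to $\bigl(N^\ast(Y\cap Z),N^\ast Z\bigr)$; the only defect is that $a(x,\xi)b(x,\eta)$ need not lie in a product symbol class $S^{\ast,\ast}$.

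To repair this I would insert a partition of unity $1=\chi_1(\xi,\eta)+\chi_2(\xi,\eta)$ on $|(\xi,\eta)|\geqq1$ by functions homogeneous of degree $0$ with $\operatorname{supp}\chi_1\subset\{|\eta|\leqq2|\xi|\}$ and $\operatorname{supp}\chi_2\subset\{|\xi|\leqq2|\eta|\}$, extended smoothly across the origin (the low-frequency remainder is smooth and harmless). On $\operatorname{supp}\chi_1$ one has $\langle\xi;\eta\rangle\sim\langle\xi\rangle$, so $|\chi_1 ab|\lesssim\langle\xi\rangle^{m_1}\langle\eta\rangle^{m_2}\sim\langle\xi;\eta\rangle^{m_1}\langle\eta\rangle^{m_2}$, and I would check that $\chi_1 ab\in S^{m_1,m_2}\bigl(\mathbb{R}^N\times(\mathbb{R}^{k_1}\setminus\{0\})\times\mathbb{R}^{k_2}\bigr)$: an $\eta$-derivative either falls on $b$, gaining $\langle\eta\rangle^{-1}$, or on $\chi_1$, which is supported in the transition cone $|\xi|\sim|\eta|$ where $\langle\xi;\eta\rangle\sim\langle\eta\rangle$, so the resulting factor $|(\xi,\eta)|^{-1}$, comparable there to both $\langle\xi;\eta\rangle^{-1}$ and $\langle\eta\rangle^{-1}$, is absorbed in either guise; the $\xi$- and $x$-derivatives are similar. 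Then, with the bookkeeping $\mu=m_1+k_1/2-N/4$ and $\nu=m_2+k_2/2=m_2+l_1/2$, the oscillatory-integral characterization of paired Lagrangian distributions recorded above gives that the $\chi_1$-piece lies in $I^{m_1+k_1/2-N/4,\,m_2+l_1/2}\bigl(N^\ast(Y\cap Z),N^\ast Y\bigr)$. Interchanging the roles of $(\xi,k_1,Y)$ and $(\eta,k_2,Z)$, the $\chi_2$-piece lies in $I^{m_2+k_2/2-N/4,\,m_1+l_2/2}\bigl(N^\ast(Y\cap Z),N^\ast Z\bigr)$, and the claimed inclusion follows by adding the two pieces and the smooth remainder.

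It then remains to verify the conditions built into Definition~\ref{theorem:paired1}. The bound $\operatorname{WF}(uv)\subset N^\ast Y\cup N^\ast Z\cup N^\ast(Y\cap Z)$ comes from the standard wave front set estimate for products together with transversality, which forces $N^\ast_x Y\cap N^\ast_x Z=\{0\}$ on $Y\cap Z$, so that the fibre sum of $\operatorname{WF}(u)$ and $\operatorname{WF}(v)$ never contains $0$ and equals $N^\ast_x(Y\cap Z)$ there. The requirement that, microlocally away from $N^\ast(Y\cap Z)\cap N^\ast Y$, the $\chi_1$-piece reduce to an ordinary Lagrangian distribution of order $\mu+\nu$ on $N^\ast(Y\cap Z)\setminus N^\ast Y$ and of order $\mu$ on $N^\ast Y\setminus N^\ast(Y\cap Z)$ is checked by integration by parts: over $Y\setminus Z$ one has $x^{\prime\prime}\neq0$, and repeated integration by parts in $\eta$ leaves a symbol in $\xi$ alone of order $m_1$, while near a direction with $\eta\neq0$ one has $|\xi|\sim|\eta|$, so $\chi_1 ab\sim\langle\xi;\eta\rangle^{m_1+m_2}$ and $(\xi,\eta)$ behaves as a single frequency variable of dimension $k_1+k_2$. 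I expect the main obstacle to be the uniform control of these double-symbol estimates across the transition cone, together with the careful reduction of the amplitudes to the standard number of phase variables; the remainder is bookkeeping of orders.
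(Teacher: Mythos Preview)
The paper does not give its own proof of this lemma: it is quoted verbatim from \cite{GreenleafUhlmann} and then applied. The only trace of the argument in the paper is inside the proof of Theorem~\ref{theorem:quadratic}, where, in the special case $k_1=k_2=1$, the author writes $\mathcal{R}_d\chi_{D_j}$ and $\mathcal{R}_d\chi_{D_k}$ as one-dimensional oscillatory integrals in transversal coordinates and observes that the product has amplitude $a(y,z,\xi)b(x,z,\eta)$; this is exactly the starting point of your argument.

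Your proposal is the standard proof and is correct. The reduction to local coordinates with $Y=\{x'=0\}$, $Z=\{x''=0\}$ is justified by transversality; the product formula for $uv$ is immediate; and the homogeneous partition $\chi_1+\chi_2$ splitting $\{|\eta|\lesssim|\xi|\}$ from $\{|\xi|\lesssim|\eta|\}$ is precisely what puts $ab$ into the product-type class $S^{m_1,m_2}$ required by the oscillatory-integral characterization of $I^{\mu,\nu}$. Your check of the $\eta$-derivatives on the transition cone (where $|\xi|\sim|\eta|$, so a gain of $\langle\xi;\eta\rangle^{-1}$ is the same as $\langle\eta\rangle^{-1}$) is the only genuinely delicate point and you handle it correctly. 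The bookkeeping $\mu=m_1+k_1/2-N/4$, $\nu=m_2+l_1/2$ with $l_1=k_2$ matches the exponent shift in the characterization, and the wave-front-set containment follows from H\"ormander's product theorem since transversality rules out cancellation to zero in the fibre sum. Nothing is missing.
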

Applying Lemma~\ref{theorem:gu} to $\mathcal{R}_d\chi_{D_j}\cdot\mathcal{R}_d\chi_{D_k}$, we have the following results. 
\begin{theorem}
\label{theorem:quadratic}
\quad
\begin{itemize}
\item[{\rm (I)}] 
$\mathcal{R}_d\chi_{D_j}\cdot\mathcal{R}_d\chi_{D_k}$ belongs to 
\begin{align*}
  \mathcal{A}_{jk}
  :=
& I^{-(d+1)/2-N(d,n)/4,-(d+1)/2}(N^\ast{S_{jk}},N^\ast{S_j}) 
\\
  +
& I^{-(d+1)/2-N(d,n)/4,-(d+1)/2}(N^\ast{S_{jk}},N^\ast{S_k}).
\end{align*}
Moreover the principal symbol of the product is non-vanishing on $N^\ast{S_{jk}}$. 
\item[{\rm (II)}] 
$\mathcal{R}_d^\ast(-\Delta_{x^{\prime\prime}})^{d/2}\bigl\{\mathcal{R}_d\chi_{D_j}\cdot\mathcal{R}_d\chi_{D_k}\bigr\}$ belongs to 
\begin{align*}
  \mathcal{X}_{jk}
  :=
& I^{-(d+1)/2-n/4+d(n-d)/2,-(d+1)/2}(N^\ast\mathcal{L}_{jk}\setminus0,N^\ast\Sigma_j\setminus0) 
\\
  +
& I^{-(d+1)/2-n/4+d(n-d)/2,-(d+1)/2}(N^\ast\mathcal{L}_{jk}\setminus0,N^\ast\Sigma_k\setminus0).
\end{align*}
In particular,
$\mathcal{R}_d^\ast(-\Delta_{x^{\prime\prime}})^{d/2}\bigl\{\mathcal{R}_d\chi_{D_j}\cdot\mathcal{R}_d\chi_{D_k}\bigr\} \in I^{-(d+1)-n/4+d(n-d)/2}(N^\ast\mathcal{L}_{jk}\setminus0)$ microlocally away from $(N^\ast\Sigma_j\setminus0){\cup}(N^\ast\Sigma_k\setminus0)$. Moreover the principal symbol is non-vanishing on $N^\ast\mathcal{L}_{jk}\setminus0$. 
\end{itemize}
\end{theorem}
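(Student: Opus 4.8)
The plan is to obtain \textnormal{(I)} as an immediate consequence of the Greenleaf--Uhlmann composition calculus (Lemma~\ref{theorem:gu}) fed by the conormal classifications and the geometry already established, and then to obtain \textnormal{(II)} by transporting \textnormal{(I)} through the filtered back-projection. For \textnormal{(I)}, first I would recall from Lemma~\ref{theorem:chid} that $\mathcal{R}_d\chi_{D_j}\in I^{-(d+2)/2+1/2-N(d,n)/4}(N^\ast{S_j})$ and $\mathcal{R}_d\chi_{D_k}\in I^{-(d+2)/2+1/2-N(d,n)/4}(N^\ast{S_k})$, so that in the normalization of Lemma~\ref{theorem:gu} we may take $X=G(d,n)$, $N=N(d,n)$, $Y=S_j$, $Z=S_k$, $k_1=k_2=\operatorname{codim}S_j=\operatorname{codim}S_k=1$ (Lemma~\ref{theorem:lemma1}) and $m_1=m_2=-(d+2)/2$. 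By Lemma~\ref{theorem:intersection}, in the nontrivial case $S_{jk}\ne\emptyset$ the hypersurfaces $S_j$ and $S_k$ intersect transversally, hence (by the standard fact recalled after Definition~\ref{theorem:transversality}) $\operatorname{codim}S_{jk}=2$ with $S_{jk}=S_j\cap S_k$ smooth and $N^\ast{S_{jk}}$ cleanly intersecting each of $N^\ast{S_j}$, $N^\ast{S_k}$; thus $l_1=l_2=\operatorname{codim}S_{jk}-1=1$, and Lemma~\ref{theorem:gu} yields precisely $\mathcal{R}_d\chi_{D_j}\cdot\mathcal{R}_d\chi_{D_k}\in\mathcal{A}_{jk}$. (If $S_{jk}=\emptyset$ the assertion degenerates to the trivial statement that the product lies in $I^{-(d+1)(n-d+2)/4}(N^\ast{S_j})+I^{-(d+1)(n-d+2)/4}(N^\ast{S_k})$.)

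For the non-vanishing of the principal symbol on $N^\ast{S_{jk}}$ I would argue locally. Near a point of $S_{jk}$ choose coordinates $x=(x_1,x_2,x^{\prime\prime\prime})$ on $G(d,n)$ with $S_j=\{x_1=0\}$ and $S_k=\{x_2=0\}$; by Proposition~\ref{theorem:characterizeconormal} and the ellipticity from Lemma~\ref{theorem:chid} write $\mathcal{R}_d\chi_{D_j}(x)=\int_{\mathbb{R}}e^{ix_1\xi_1}a(x,\xi_1)d\xi_1$ and $\mathcal{R}_d\chi_{D_k}(x)=\int_{\mathbb{R}}e^{ix_2\xi_2}b(x,\xi_2)d\xi_2$ with $a,b\in S^{-d/2-1}(\mathbb{R}^{N(d,n)}\times\mathbb{R})$ whose leading parts do not vanish. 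Then
$$\mathcal{R}_d\chi_{D_j}(x)\mathcal{R}_d\chi_{D_k}(x)=\iint_{\mathbb{R}^2}e^{i(x_1\xi_1+x_2\xi_2)}a(x,\xi_1)b(x,\xi_2)d\xi_1d\xi_2$$
is a paired Lagrangian oscillatory integral whose amplitude $a(x,\xi_1)b(x,\xi_2)$ is elliptic in the regime $\lvert\xi_1\rvert,\lvert\xi_2\rvert\to\infty$, that is over $N^\ast{S_{jk}}$ away from $N^\ast{S_j}\cup N^\ast{S_k}$; hence the principal symbol there is non-vanishing, and this is exactly the corner direction $N^\ast{S_{jk}}\setminus(N^\ast{S_j}\cup N^\ast{S_k})$.

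For \textnormal{(II)}, I would apply the filtered back-projection $\mathcal{R}_d^\ast(-\Delta_{x^{\prime\prime}})^{d/2}$ to the membership of \textnormal{(I)}. The operator $(-\Delta_{x^{\prime\prime}})^{d/2}$ is a pseudodifferential operator on $G(d,n)$ with symbol $\lvert\xi\rvert^d$ in the notation of Lemma~\ref{theorem:cotangentbundle}, and by the explicit descriptions in Lemma~\ref{theorem:lemma1} and Lemma~\ref{theorem:conormal2} the $\sigma^\perp$-component $\xi$ of every covector in $N^\ast{S_j}$, $N^\ast{S_k}$ or $N^\ast{S_{jk}}$ is nonzero, so $(-\Delta_{x^{\prime\prime}})^{d/2}$ is microlocally elliptic of order $d$ near these sets; it therefore raises the first index of each paired Lagrangian class by $d$ and preserves the second. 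Next, $\mathcal{R}_d^\ast$ is an elliptic Fourier integral operator whose canonical relation $(\Lambda_\phi^\prime)^\ast$ satisfies, by Lemma~\ref{theorem:canonical2}, $(\Lambda_\phi^\prime)^\ast\circ N^\ast{S_j}=N^\ast\Sigma_j$, $(\Lambda_\phi^\prime)^\ast\circ N^\ast{S_k}=N^\ast\Sigma_k$ and $(\Lambda_\phi^\prime)^\ast\circ N^\ast{S_{jk}}\subset N^\ast\mathcal{L}_{jk}$; since each of these images is again a conic conormal bundle (a union of such in the case of $N^\ast\mathcal{L}_{jk}$) and the relevant flags remain cleanly intersecting, $\mathcal{R}_d^\ast$ carries $I^{\mu,\nu}(N^\ast{S_{jk}},N^\ast{S_j})$ to $I^{\mu^\prime,\nu}(N^\ast\mathcal{L}_{jk},N^\ast\Sigma_j)$, and likewise with $S_k$ and $\Sigma_k$, again preserving the second index. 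The remaining shift of the first index is pinned down by Proposition~\ref{theorem:fbp}: applying $\mathcal{R}_d^\ast(-\Delta_{x^{\prime\prime}})^{d/2}$ to $\mathcal{R}_d\chi_{D_j}\in I^{-(d+1)(n-d+2)/4}(N^\ast{S_j})$ returns $\chi_{D_j}\in I^{-1/2-n/4}(N^\ast\Sigma_j)$ (Lemma~\ref{theorem:chid}), so the first index is sent to $-1+1/2-n/4$, giving membership in $\mathcal{X}_{jk}$; the principal symbol stays non-vanishing on $N^\ast\mathcal{L}_{jk}$ because every operator applied is microlocally elliptic on the relevant conormal bundles. Finally, microlocally away from $N^\ast\Sigma_j\cup N^\ast\Sigma_k$, Definition~\ref{theorem:paired1} reduces each summand to $I^{(-1+1/2-n/4)+(-(d+2)/2+1/2)}(N^\ast\mathcal{L}_{jk})=I^{-(d+2)/2-n/4}(N^\ast\mathcal{L}_{jk})$, which is the ``in particular'' clause.

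The routine content here is the bookkeeping of orders; the delicate point is justifying that the filtered back-projection transports paired Lagrangian classes as claimed even though the Fourier integral operator $\mathcal{R}_d^\ast$ has a canonical relation that is \emph{not} a canonical graph, the dimensions of $T^\ast\mathbb{R}^n$ and $T^\ast{G(d,n)}$ being different. One must verify that $(\Lambda_\phi^\prime)^\ast$ composes cleanly with the flag $\bigl(N^\ast{S_{jk}},N^\ast{S_j}\bigr)$ and that this composition preserves the paired structure and the stated indices; this is where the ellipticity of $\mathcal{R}_d$ from Theorem~\ref{theorem:canonicalrd} together with the fact that $(\Lambda_\phi^\prime)^\ast$ sends each conormal bundle to a conormal bundle (or a union of such), as in Lemma~\ref{theorem:lemma1} and Lemma~\ref{theorem:canonical2}, does the work, in the spirit of \cite{PalaciosUhlmannWang}. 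One should also take care, when $SS^{(2)}\ne\emptyset$, that the image covers only the sub-bundle $\widetilde{N^\ast\mathcal{L}_{jk}^{(2)}}$ of $N^\ast\mathcal{L}_{jk}^{(2)}$, which is exactly why only the inclusion $(\Lambda_\phi^\prime)^\ast\circ N^\ast{S_{jk}}\subset N^\ast\mathcal{L}_{jk}$ is available and why $\mathcal{X}_{jk}$ is phrased with $N^\ast\mathcal{L}_{jk}$ rather than with a single conormal bundle.
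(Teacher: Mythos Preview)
Your proposal is correct and follows essentially the same route as the paper: for \textnormal{(I)} you apply the Greenleaf--Uhlmann lemma (Lemma~\ref{theorem:gu}) with $N=N(d,n)$, $m_1=m_2=-(d+2)/2$, $k_1=k_2=l_1=l_2=1$, exactly as the paper does, and you verify the non-vanishing principal symbol by the same local-coordinate oscillatory-integral computation; for \textnormal{(II)} you transport through the elliptic Fourier integral operator $\mathcal{R}_d^\ast(-\Delta_{x^{\prime\prime}})^{d/2}$ and invoke Lemma~\ref{theorem:canonical2} for the image Lagrangians, again matching the paper. The one small difference is how you determine the order shift in \textnormal{(II)}: the paper simply states that $\mathcal{R}_d^\ast(-\Delta_{x^{\prime\prime}})^{d/2}$ is an elliptic Fourier integral operator of order $d/2+N(d,n)/4-n/4$ and adds this directly, whereas you pin down the shift by applying the inversion formula (Proposition~\ref{theorem:fbp}) to $\mathcal{R}_d\chi_{D_j}$ and comparing the known orders of $\mathcal{R}_d\chi_{D_j}$ and $\chi_{D_j}$ from Lemma~\ref{theorem:chid}; both give the same answer $-1+1/2-n/4$, and your method has the minor virtue of avoiding a separate order computation for the adjoint.
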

\begin{proof}[{\bf Proof}]
Recall Lemmas~\ref{theorem:intersection} and \ref{theorem:chid}. 
To prove (I), we apply Lemma~\ref{theorem:gu} to $\mathcal{R}_d\chi_{D_j}\cdot\mathcal{R}_d\chi_{D_k}$ with 
$$
N=N(d,n),
\quad
m_1=m_2=-\frac{d+2}{2}, 
\quad
k_1=k_2=1, 
\quad
l_1=l_2=1. 
$$
Then we deduce that 
\begin{align*}
& \mathcal{R}_d\chi_{D_j}\cdot\mathcal{R}_d\chi_{D_k}
\\
& \qquad \in 
  I^{-(d+2)/2+1/2-N(d,n)/4, -(d+2)/2+1/2}\bigl(N^\ast{S_{jk}},N^\ast{S_j}\bigr)
\\
& \qquad +
  I^{-(d+2)/2+1/2-N(d,n)/4, -(d+2)/2+1/2}\bigl(N^\ast{S_{jk}},N^\ast{S_k}\bigr) 
\\
& \qquad \in 
  I^{-(d+1)/2-N(d,n)/4, -(d+1)/2}\bigl(N^\ast{S_{jk}},N^\ast{S_j}\bigr)
\\
& \qquad +
  I^{-(d+1)/2-N(d,n)/4, -(d+1)/2}\bigl(N^\ast{S_{jk}},N^\ast{S_k}\bigr) 
\\
& \qquad =
  \mathcal{A}_{jk}. 
\end{align*}
To prove that the principal symbol of $\mathcal{R}_d\chi_{D_j}\cdot\mathcal{R}_d\chi_{D_k}$ does not vanish everywhere on $N^\ast{S_{jk}}$, we need to go back to the proof of Lemma~\ref{theorem:gu}. Here we describe its outline. Recall that $S_j$  and $S_k$ intersect transversely. Fix arbitrary $p_0 \in S_{jk}$. We can choose local coordinates $(x,y,z)\in\mathbb{R}\times\mathbb{R}\times\mathbb{R}^{N(d,n)-2}$ near $p_0$ with $\bigl(x(p_0),y(p_0),z(p_0)\bigr)=0$ such that $S_j=\{x=0\}$ and $S_k=\{y=0\}$ near $p_0$. Moreover there exist symbols $a(y,z,\xi)$ and $b(x,z,\eta)$ such that 
the principal part of $a(y,z,\xi)$ is non-vanishing on $N^\ast_{p_0}{S_j}=\{(0,y,z;\xi,0,0,)\}$, 
the principal part of $b(x,z,\eta)$ is non-vanishing on $N^\ast_{p_0}{S_k}=\{(x,0,z,\xi,0,0)\}$, 
and  
\begin{align*}
  \mathcal{R}_d\chi_{D_j}(x,y,z)
& =
  \int_{\mathbb{R}}
  e^{ix\xi}
  a(y,z,\xi)
  d\xi, 
\\
  \mathcal{R}_d\chi_{D_k}(x,y,z)
& =
  \int_{\mathbb{R}}
  e^{iy\eta}
  b(x,z,\eta)
  d\eta 
\end{align*}
near $p_0$. Hence 
$$
\mathcal{R}_d\chi_{D_j}(x,y,z)
\cdot
\mathcal{R}_d\chi_{D_k}(x,y,z)
=
\iint_{\mathbb{R}\times\mathbb{R}}
e^{i(x\xi+y\eta)}
a(y,z,\xi)b(x,z,\eta)
d\xi
d\eta
$$
near $p_0$. This shows that the symbol of $\mathcal{R}_d\chi_{D_j}(x,y,z)\cdot\mathcal{R}_d\chi_{D_k}(x,y,z)$ is $a(y,z,\xi)b(x,z,\eta)$ whose principal part is non-vanishing on $N^\ast_{p_0}{S_{jk}}=\{(0,0,z;\xi,\eta,0)\}$ near $p_0$. 
\par
Next we consider (II). $\mathcal{R}_d^\ast(-\Delta_{x^{\prime\prime}})^{d/2}$ is an elliptic Fourier integral operator of order 
$$
\frac{d}{2}+\frac{N(d,n)}{4}-\frac{n}{4}, 
$$
and its canonical relation is the same as that of $\mathcal{R}_d^\ast$, that is, 
$(\Lambda_\phi^\prime)^\ast$ since $(-\Delta_{x^{\prime\prime}})^{d/2}$ is an elliptic pseudodifferential operator. 
Moreover Lemma~\ref{theorem:canonical2} shows that 
the excess of 
$(\Lambda_\phi^\prime){\circ}N^\ast{S}_j$ and 
$(\Lambda_\phi^\prime){\circ}N^\ast{S_{jk}^{(1)}}$ is $d(n-1-d)$, 
and the excess of $(\Lambda_\phi^\prime){\circ}N^\ast{S_{jk}^{(2)}}$ is $d(n-2-d)$. 
There is a difference $-d/2$ of the excesses. 
Here we remark about the action of Fourier integral operators 
on paired Lagrangian distributions. 
Suppose that 
$A$ is a Fourier integral operator of order $m$ with a canonical relation $C$, 
$u \in I^{\mu,\nu}(\Lambda_0,\Lambda_1)$, 
and $C{\circ}\Lambda_j$ is clean with excess $e_j$ for $j=0,1$. 
Then Lemma~\ref{theorem:hoermander2523} implies that  
$$
Au \in I^{m+\mu+e_1/2,\nu+(e_0-e_1)/2}(C{\circ}\Lambda_0,C{\circ}\Lambda_1). 
$$
Hence we deduce that
\begin{align*}
& \mathcal{R}_d^\ast(-\Delta_{x^{\prime\prime}})^{d/2}\bigl\{\mathcal{R}_d\chi_{D_j}(x,y,z)\cdot\mathcal{R}_d\chi_{D_k}\bigr\}
\\
& \qquad \in
  I^{-(d+1)/2-n/4+d(n-d)/2, -(d+1)/2}\bigl((\Lambda_\phi^\prime)^\ast{\circ}N^\ast{S_{jk}^{(1)}},(\Lambda_\phi^\prime)^\ast{\circ}N^\ast{S_j}\bigr)
\\
& \qquad +
  I^{-(d+1)/2-n/4+d(n-d)/2, -(d+1)/2}\bigl((\Lambda_\phi^\prime)^\ast{\circ}N^\ast{S_{jk}^{(1)}},(\Lambda_\phi^\prime)^\ast{\circ}N^\ast{S_k}\bigr) 
\\
& \qquad +
  I^{-(d+1)/2-n/4+d(n-d)/2, -d-1/2}\bigl((\Lambda_\phi^\prime)^\ast{\circ}N^\ast{S_{jk}^{(2)}},(\Lambda_\phi^\prime)^\ast{\circ}N^\ast{S_j}\bigr)
\\
& \qquad +
  I^{-(d+1)/2-n/4+d(n-d)/2, -d-1/2}\bigl((\Lambda_\phi^\prime)^\ast{\circ}N^\ast{S_{jk}^{(2)}},(\Lambda_\phi^\prime)^\ast{\circ}N^\ast{S_k}\bigr) 
\\
& \qquad =
  I^{-(d+1)/2-n/4+d(n-d)/2, -(d+1)/2}\bigl(N^\ast\mathcal{L}_{jk}\setminus0,N^\ast\Sigma_j\setminus0\bigr)
\\
& \qquad +
  I^{-(d+1)/2-n/4+d(n-d)/2, -(d+1)/2}\bigl(N^\ast\mathcal{L}_{jk}\setminus0,N^\ast\Sigma_k\setminus0\bigr) 
\\
& \qquad +
  I^{-(d+1)/2-n/4+d(n-d)/2, -d-1/2}\bigl((N^\ast\Omega_{jk}\setminus0){\cup}(N^\ast\Omega_{kj}\setminus0),N^\ast\Sigma_j\setminus0\bigr)
\\
& \qquad +
  I^{-(d+1)/2-n/4+d(n-d)/2, -d-1/2}\bigl((N^\ast\Omega_{jk}\setminus0){\cup}(N^\ast\Omega_{kj}\setminus0),N^\ast\Sigma_k\setminus0\bigr) 
\\
& \qquad \subset
  I^{-(d+1)/2-n/4+d(n-d)/2, -(d+1)/2}\bigl(N^\ast\mathcal{L}_{jk}\setminus0,N^\ast\Sigma_j\setminus0\bigr)
\\
& \qquad +
  I^{-(d+1)/2-n/4+d(n-d)/2, -(d+1)/2}\bigl(N^\ast\mathcal{L}_{jk}\setminus0,N^\ast\Sigma_k\setminus0\bigr) 
\\
& \qquad +
  I^{-(d+1)/2-n/4+d(n-d)/2}\bigl(N^\ast\Sigma_j\setminus0\bigr)
\\
& \qquad +
  I^{-(d+1)/2-n/4+d(n-d)/2}\bigl(N^\ast\Sigma_k\setminus0\bigr) 
\\
& \qquad \subset
  I^{-(d+1)/2-n/4+d(n-d)/2, -(d+1)/2}\bigl(N^\ast\mathcal{L}_{jk}\setminus0,N^\ast\Sigma_j\setminus0\bigr)
\\
& \qquad +
  I^{-(d+1)/2-n/4+d(n-d)/2, -(d+1)/2}\bigl(N^\ast\mathcal{L}_{jk}\setminus0,N^\ast\Sigma_k\setminus0\bigr) 
\\
& \qquad  =
  \mathcal{X}_{jk} 
\end{align*}
The principal part of the symbol of $\mathcal{R}_d^\ast(-\Delta_{x^{\prime\prime}})^{d/2}\bigl\{\mathcal{R}_d\chi_{D_j}(x,y,z)\cdot\mathcal{R}_d\chi_{D_k}(x,y,z)\bigr\}$ is non-vanishing on $N^\ast\mathcal{L}_{jk}\setminus0$ since the principal symbol of $\mathcal{R}_d\chi_{D_j}(x,y,z)\cdot\mathcal{R}_d\chi_{D_k}(x,y,z)$ is non-vanishing on $N^\ast{S_{jk}}$ and $\mathcal{R}_d^\ast(-\Delta_{x^{\prime\prime}})^{d/2}$ is an elliptic Fourier integral operator. This completes the proof.
\end{proof}
%
%
\section{Generalized beam hardening effects}
\label{section:artifacts}
Finally in the present section we prove our main theorem on what the metal streaking artifacts are. Recall that the CT image of the metal region is given by 
$$
f_\text{MA}
=
\sum_{l=1}^\infty
A_l
(\alpha\varepsilon)^{2l}
\mathcal{R}_d^\ast(-\Delta_{x^{\prime\prime}})^{d/2}
\bigl[(\mathcal{R}_d\chi_D)^{2l}\bigr],
$$
where $\{A_l\}_{l=1}^\infty$ is a sequence of real numbers, 
and $\alpha$ and $\varepsilon$ are small positive constants. 
Our main results are the following. 
 \begin{theorem}
\label{theorem:main} 
Suppose {\rm (A)} on the metal region $D$. Then away from $N^\ast\Sigma$, the metal streaking artifacts $f_\text{MA}$ belongs to $I^{-(d+1+n/4)+d(n-d)/2}(N^\ast\mathcal{L}\setminus0)$, where 
$$
N^\ast\mathcal{L}
:=
\bigcup_{1 \leqq j <\leqq J}
N^\ast\mathcal{L}_{jk}.
$$
\end{theorem}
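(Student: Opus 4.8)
The plan is to expand $f_\text{MA}$ into a series of products of the conormal distributions $\mathcal{R}_d\chi_{D_j}$ and to reduce every such product, by the symbolic calculus of Section~\ref{section:lagrangian}, to cases already settled. First I would write
$$
f_\text{MA}
=
\sum_{l=1}^\infty
A_l(\alpha\varepsilon)^{2l}
\mathcal{R}_d^\ast(-\Delta_{x^{\prime\prime}})^{d/2}
\biggl[\,\sum_{(j_1,\dots,j_{2l})}\ \prod_{i=1}^{2l}\mathcal{R}_d\chi_{D_{j_i}}\biggr]
$$
and, in each monomial, group the factors according to the set $A\subset\{1,\dots,J\}$ of distinct indices occurring in it. Applying Lemma~\ref{theorem:lemma32} repeatedly to each $j\in A$ shows that the product of all factors carrying the index $j$ still lies in $I^{-(d+1)(n-d+2)/4}(N^\ast S_j)$, so every monomial is, microlocally, a product of $\#A$ conormal distributions, one attached to each bundle $N^\ast S_j$ with $j\in A$, all of amplitude order $-d/2-1$.

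Next I would split according to $\#A$. If $\#A=1$, the monomial lies in $I^{-(d+1)(n-d+2)/4}(N^\ast S_j)$, so by Lemma~\ref{theorem:canonical2} its image under the elliptic Fourier integral operator $\mathcal{R}_d^\ast(-\Delta_{x^{\prime\prime}})^{d/2}$ is conormal with respect to $\Sigma_j$; its singularities lie on $N^\ast\Sigma$ and hence it does not contribute away from $N^\ast\Sigma$. If $A=\{j,k\}$, Theorem~\ref{theorem:quadratic} applies directly: the monomial belongs to $\mathcal{A}_{jk}$, its image under $\mathcal{R}_d^\ast(-\Delta_{x^{\prime\prime}})^{d/2}$ belongs to $\mathcal{X}_{jk}$, hence to $I^{-(d/2+1+n/4)}(N^\ast\mathcal{L}_{jk})$ microlocally away from $N^\ast\Sigma_j\cup N^\ast\Sigma_k$; and since $\operatorname{WF}(\mathcal{R}_d\chi_{D_j}\cdot\mathcal{R}_d\chi_{D_k})\subset N^\ast S_j\cup N^\ast S_k\cup N^\ast S_{jk}^{(1)}\cup N^\ast S_{jk}^{(2)}$, pushing the wavefront set through the canonical relation $(\Lambda_\phi^\prime)^\ast$ with Lemma~\ref{theorem:canonical2} yields, away from $N^\ast\Sigma$, the refined inclusion $\operatorname{WF}\subset N^\ast\mathcal{L}_{jk}^{(1)}\cup\widetilde{N^\ast\mathcal{L}_{jk}^{(2)}}$.

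The case $\#A\ge 3$ is the main obstacle. I would check, as in Lemma~\ref{theorem:intersection}, that in general position the iterated intersections $S_{j_1}\cap\dots\cap S_{j_r}$ are transversal (clean in the degenerate configurations where the relevant tangent points become collinear inside their common $d$-plane), so that Lemma~\ref{theorem:gu} may be iterated and such a monomial becomes a finite sum of paired Lagrangian distributions attached to the conormal bundles of these intersections. The order count is favourable: a product of $r$ conormal distributions of amplitude order $-d/2-1$ is conormal of order $-r(d+1)/2-N(d,n)/4$ on the deepest stratum, and since $\mathcal{R}_d^\ast(-\Delta_{x^{\prime\prime}})^{d/2}$ raises orders by $d/2+N(d,n)/4-n/4$, the resulting order is $-r(d+1)/2+d/2-n/4$, which is strictly below $-(d/2+1+n/4)$ for $r\ge 3$; on the two-fold strata $N^\ast S_{jk}$ the remaining factors $\mathcal{R}_d\chi_{D_l}$ are smooth, so there the order and, by the $\#A=2$ analysis, the wavefront-set inclusion into $N^\ast\mathcal{L}_{jk}^{(1)}\cup\widetilde{N^\ast\mathcal{L}_{jk}^{(2)}}$ are unchanged. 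On the deeper strata the wavefront-set bookkeeping is the technical heart: one writes a covector there as a sum $\sum_i\eta_{j_i}(t_1^{(i)},\dots,t_d^{(i)},1)$ coming from the contributing bundles $N^\ast S_{j_i}$, notes that $(\Lambda_\phi^\prime)^\ast$ annihilates every covector not of the form $\xi(t_1,\dots,t_d,1)$ with $\xi\in\sigma^\perp$, and uses that the feet $y_{j_i}$ have pairwise distinct $\sigma$-coordinates (because the $D_j$ are disjoint) to conclude that, up to terms of strictly lower order, the surviving directions are again those of $N^\ast\mathcal{L}^{(1)}\cup\widetilde{N^\ast\mathcal{L}^{(2)}}$.

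It then remains to sum the series, which I would do as in \cite{PalaciosUhlmannWang}: since $\{A_l\}$ are the Taylor coefficients of the fixed real-analytic function $-\log\bigl(\sinh t/t\bigr)$ they grow at most geometrically, while the number of monomials at level $2l$ grows like $J^{2l}$ and each application of Lemmas~\ref{theorem:lemma32} and \ref{theorem:gu} and of $\mathcal{R}_d^\ast(-\Delta_{x^{\prime\prime}})^{d/2}$ costs only a bounded factor in the relevant seminorms; hence for $\alpha\varepsilon$ small enough the partial sums converge, microlocally away from $N^\ast\Sigma$, in $I^{-(d/2+1+n/4)}(N^\ast\mathcal{L})$, with wavefront set contained in $N^\ast\mathcal{L}^{(1)}\cup\widetilde{N^\ast\mathcal{L}^{(2)}}$, which is the assertion.
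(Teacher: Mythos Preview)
Your approach diverges from the paper's precisely at the point you flag as ``the main obstacle'', and the divergence is not just stylistic: it is where the paper invests its main technical effort, and where your sketch has a genuine gap.

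The paper does \emph{not} analyse monomials with $\#A\ge 3$ by iterating the Greenleaf--Uhlmann lemma on higher intersections $S_{j_1}\cap\dots\cap S_{j_r}$. Instead, it proves (Lemma~\ref{theorem:final}) that each class $\mathcal{A}_{jk}$ of paired Lagrangian distributions is itself an \emph{algebra} under pointwise multiplication. This is done by a direct and fairly delicate symbol estimate: writing elements of $\mathcal{A}_{jk}$ as oscillatory integrals with amplitudes in the product symbol class $S^{-(d+2)/2,-(d+2)/2}$, the convolution of two such amplitudes is shown, via a dyadic cut-off decomposition of the frequency variables, to remain in the same class (or, for the mixed case~(II), in the sum of the two possible product classes). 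Once $\mathcal{A}$ is an algebra, every power $(\mathcal{R}_d\chi_D)^{2l}$ lands in $\sum_j I^{-(d+1)(n-d+2)/4}(N^\ast S_j)+\mathcal{A}$ automatically, and the filtered back-projection is applied once, using Theorem~\ref{theorem:quadratic}~(II) and Lemma~\ref{theorem:canonical2}. No triple intersections, no iteration of Lemma~\ref{theorem:gu}, no genericity hypotheses are needed.

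Your route, by contrast, requires (i) transversality (or at least cleanliness) of $S_{j_1}\cap\dots\cap S_{j_r}$ for $r\ge 3$, which you only assert ``in general position'' and which is neither proved in the paper nor obviously true for a fixed configuration of convex bodies; (ii) an iterated version of Lemma~\ref{theorem:gu} applicable to a product of a paired Lagrangian distribution with a further conormal distribution, which is not the statement available; and (iii) a wavefront-set argument on the deeper strata that shows the surviving directions after $(\Lambda_\phi^\prime)^\ast$ still lie in $N^\ast\mathcal{L}^{(1)}\cup\widetilde{N^\ast\mathcal{L}^{(2)}}$, for which the sketch you give (pairwise distinct $\sigma$-coordinates of the feet $y_{j_i}$) does not by itself rule out that a nontrivial linear combination $\sum_i\eta_{j_i}(t_1^{(i)},\dots,t_d^{(i)},1)$ is again of the form $\xi(t_1,\dots,t_d,1)$. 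Each of these is a substantive obstacle, and the paper's algebra lemma is exactly the device that bypasses all three at once.
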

Roughly speaking, Theorem~\ref{theorem:main} asserts that $f_\text{MA}$ is a conormal distribution whose singular support consists cone surfaces and cylinder surfaces contacting $\Sigma_1,\dotsc,\Sigma_J$. 
\par
Basically our strategy of the proof of Theorem~\ref{theorem:main} is the same as that of \cite[Theorem~4.7]{PalaciosUhlmannWang} due to Palacios, Uhlmann and Wang. Firstly we state our strategy of the  proof of Theorem~\ref{theorem:main} shortly. Set 
$$
\mathcal{A}
:=
\sum_{1 \leqq j < k \leqq J}
\mathcal{A}_{jk}, 
\quad
\mathcal{X}
:=
\sum_{1 \leqq j < k \leqq J}
\mathcal{X}_{jk}, 
$$
In terms of this notation Theorem~\ref{theorem:main} asserts 
that $f_\text{MA} \in \mathcal{X}$. 
Lemmas~\ref{theorem:canonical2}, \ref{theorem:lemma32} and \ref{theorem:quadratic} 
essentially show that 
$(\mathcal{R}_d\chi_D)^2 \in \mathcal{A}$, 
$\mathcal{R}_d^\ast(\mathcal{A}) \subset \mathcal{X}$, 
and that away from $N^\ast\Sigma$, 
$u \in I^{-(d+1+n/4)+d(n-d)/2}(N^\ast\mathcal{L}\setminus0)$ 
for $u\in\mathcal{A}$. 
If $\mathcal{A}$ is an algebra, then Theorem~\ref{theorem:main} holds. 
So we shall prove that $\mathcal{A}$ is an algebra. 
\begin{lemma}
\label{theorem:final}
Suppose that $j{\ne}k$. 
Then $\mathcal{A}_{jk}$ is an algebra, and so is $\mathcal{A}$. 
More concretely we have the following. 
\begin{itemize}
\item[{\rm (I)}] 
If 
$$
u,v 
\in 
I^{-(d+1)/2-N(d,n)/4, -(d+1)/2}\bigl(N^\ast{S_{jk}},N^\ast{S_j}\bigr),  
$$
then 
$$
uv \in I^{-(d+1)/2-N(d,n)/4, -(d+1)/2}\bigl(N^\ast{S_{jk}},N^\ast{S_j}\bigr).  
$$
\item[{\rm (II)}] 
If
$$
u 
\in 
I^{-(d+1)/2-N(d,n)/4, -(d+1)/2}\bigl(N^\ast{S_{jk}},N^\ast{S_j}\bigr), 
$$
$$
v
\in 
I^{-(d+1)/2-N(d,n)/4, -(d+1)/2}\bigl(N^\ast{S_{jk}},N^\ast{S_k}\bigr),  
$$
then $uv \in \mathcal{A}_{jk}$. 
\end{itemize}
\end{lemma}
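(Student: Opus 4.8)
The plan is to reduce both statements to the oscillatory-integral description of paired Lagrangian distributions recalled just before Lemma~\ref{theorem:gu}, and then to a symbol calculus that refines the argument in the proof of Lemma~\ref{theorem:lemma32}. Write $N:=N(d,n)$; by Lemma~\ref{theorem:intersection} one has $\operatorname{codim}S_j=\operatorname{codim}S_k=1$ and $\operatorname{codim}S_{jk}=2$. Both assertions are microlocal, and away from $S_j\cup S_k$ everything in sight is smooth. Microlocally near $S_j\setminus S_{jk}$ (resp.\ $S_k\setminus S_{jk}$) the paired classes reduce, by Definition~\ref{theorem:paired1}, to ordinary conormal classes with respect to the single hypersurface $S_j$ (resp.\ $S_k$) of the order $-(d+1)(n-d+2)/4$; since $S_j$ and $S_k$ are disjoint there, the relevant product is a smooth factor times a conormal one, or a product of two distributions conormal with respect to $S_j$, and the latter is covered verbatim by Lemma~\ref{theorem:lemma32}. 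So the whole content is the behaviour near $S_{jk}$, where I fix $p_0\in S_{jk}$ and local coordinates $(x,y,z)\in\mathbb{R}\times\mathbb{R}\times\mathbb{R}^{N-2}$ with $S_j=\{x=0\}$ and $S_{jk}=\{x=y=0\}$.

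For (I), by the characterization preceding Lemma~\ref{theorem:gu} one may write, near $p_0$,
\[
u(x,y,z)=\iint e^{i(x\xi+y\eta)}a(x,y,z,\xi,\eta)\,d\xi\,d\eta,\qquad v(x,y,z)=\iint e^{i(x\xi+y\eta)}b(x,y,z,\xi,\eta)\,d\xi\,d\eta,
\]
with $a,b\in S^{-d/2-1,\,-d/2-1}\bigl(\mathbb{R}^{N}\times(\mathbb{R}\setminus\{0\})\times\mathbb{R}\bigr)$ (the bidegree following from the orders in the lemma, $\operatorname{codim}S_j=1$, $\operatorname{codim}S_{jk}=2$ and $\dim G(d,n)=N$). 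Multiplying and passing to the total frequencies $(\xi,\eta)$ gives $uv=\iint e^{i(x\xi+y\eta)}c\,d\xi\,d\eta$ with
\[
c(x,y,z,\xi,\eta)=\iint a(x,y,z,\xi-\xi',\eta-\eta')\,b(x,y,z,\xi',\eta')\,d\xi'\,d\eta',
\]
and it remains to prove $c\in S^{-d/2-1,\,-d/2-1}$. This is the two-frequency version of the estimate in the proof of Lemma~\ref{theorem:lemma32}: a cutoff $\Psi=\psi\bigl(\langle\xi-\xi';\eta-\eta'\rangle/\langle\xi;\eta\rangle\bigr)$ splits the $(\xi',\eta')$-integral into the region $\langle\xi-\xi';\eta-\eta'\rangle>\langle\xi;\eta\rangle/2$, on which the $a$-factor already supplies the weight $\langle\xi;\eta\rangle^{-d/2-1}$, and the region $\langle\xi';\eta'\rangle\sim\langle\xi;\eta\rangle$, on which the $b$-factor does; in either case the residual integral converges because $d\geq1$, integrating out one frequency via $\int_{\mathbb{R}}\langle\zeta;\eta'\rangle^{-d/2-1}\,d\zeta\leq C\langle\eta'\rangle^{-d/2}$ and then a one-dimensional convolution estimate to recover the weight $\langle\eta\rangle^{-d/2-1}$. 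Frequency derivatives are transferred onto $a$ via $\partial_\xi^\alpha\partial_\eta^\beta a(\xi-\xi',\eta-\eta')=(-\partial_{\xi'})^\alpha(-\partial_{\eta'})^\beta a(\xi-\xi',\eta-\eta')$ and moved by integration by parts onto $b$ and $\Psi$ (using $\partial_\xi^\alpha\partial_\eta^\beta\Psi=O(\langle\xi;\eta\rangle^{-\alpha-\beta})$), exactly as in Lemma~\ref{theorem:lemma32}, and the $(x,y,z)$-derivatives by the Leibniz rule and the uniform symbol bounds on $a,b$. Hence $uv\in I^{\mu,\nu}(N^\ast S_{jk},N^\ast S_j)$; the case of two elements of the $N^\ast S_k$-class is identical with $x$ and $y$ interchanged.

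For (II) I keep the same coordinates, but now $u$ is adapted to $S_j=\{x=0\}$ and $v$ to $S_k=\{y=0\}$, so near $p_0$
\[
u=\iint e^{i(x\xi+y\eta)}a(x,y,z,\xi,\eta)\,d\xi\,d\eta,\qquad v=\iint e^{i(y\zeta+x\theta)}b(x,y,z,\zeta,\theta)\,d\zeta\,d\theta,
\]
with $a,b\in S^{-d/2-1,-d/2-1}$ as above, whence
\[
uv=\iiiint e^{i(x(\xi+\theta)+y(\eta+\zeta))}a(x,y,z,\xi,\eta)\,b(x,y,z,\zeta,\theta)\,d\xi\,d\eta\,d\zeta\,d\theta.
\]
Fix a partition of unity $1=\chi_1(p,q)+\chi_2(p,q)$ on $\mathbb{R}^2\setminus\{0\}$, homogeneous of degree $0$, with $\operatorname{supp}\chi_1\subset\{|q|\leq2|p|\}$ (a conic neighbourhood of the conormal direction of $S_j$) and $\operatorname{supp}\chi_2\subset\{|p|\leq2|q|\}$, and insert $1=\chi_1+\chi_2$ in the total frequency $(p,q)=(\xi+\theta,\eta+\zeta)$. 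This writes $uv=u_1+u_2$, where after reducing to the variables $(p,q)$ and two remaining frequencies $u_1=\iint e^{i(xp+yq)}\chi_1(p,q)\,I(x,y,z,p,q)\,dp\,dq$ with $I=\iint a(p-\theta,\eta)\,b(q-\eta,\theta)\,d\theta\,d\eta$. Using only $\langle\,\cdot\,;\,\cdot\,\rangle\geq\langle\,\cdot\,\rangle$ one has $|a(p-\theta,\eta)|\leq C\langle p-\theta\rangle^{-d/2-1}\langle\eta\rangle^{-d/2-1}$ and $|b(q-\eta,\theta)|\leq C\langle q-\eta\rangle^{-d/2-1}\langle\theta\rangle^{-d/2-1}$, so $I$ factors into two one-dimensional self-convolutions of $\langle\,\cdot\,\rangle^{-d/2-1}$ and obeys $|I|\leq C\langle p\rangle^{-d/2-1}\langle q\rangle^{-d/2-1}$; the $\partial_p^\alpha\partial_q^\beta$-bounds follow by the same scheme, the gain of $\langle p\rangle^{-\alpha}\langle q\rangle^{-\beta}$ coming in addition from the cancellations $\int_{\mathbb{R}}(\partial_1 a)(p-\theta,\eta)\,d\theta=0$ and $\int_{\mathbb{R}}(\partial_1 b)(q-\eta,\theta)\,d\eta=0$ (with $\partial_p^\alpha\partial_q^\beta\chi_1=O(\langle p\rangle^{-\alpha-\beta})$ on $\operatorname{supp}\chi_1$). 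Since $\langle p;q\rangle\sim\langle p\rangle$ on $\operatorname{supp}\chi_1$, this is exactly the $S^{-d/2-1,-d/2-1}$-bound in the frequency variables $(p,q)$ adapted to $S_j$, so $u_1\in I^{\mu,\nu}(N^\ast S_{jk},N^\ast S_j)$; symmetrically $u_2\in I^{\mu,\nu}(N^\ast S_{jk},N^\ast S_k)$, whence $uv\in\mathcal{A}_{jk}$.

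By bilinearity, (I), its $S_k$-mirror, and (II) give that $\mathcal{A}_{jk}$ is an algebra. The remaining claim that $\mathcal{A}=\sum_{1\leq j<k\leq J}\mathcal{A}_{jk}$ is an algebra reduces, again by bilinearity, to products of a generator of $\mathcal{A}_{jk}$ with one of $\mathcal{A}_{j'k'}$; using $S_a\cap S_b=S_{ab}$ for $a\neq b$, the transversality of these intersections whenever nonempty (Lemma~\ref{theorem:intersection}), and Lemma~\ref{theorem:gu}, such a product either reduces microlocally to a case already treated or lies in a paired Lagrangian class associated to $(N^\ast S_{ab},N^\ast S_a)$, hence (the orders matching) in $\mathcal{A}_{ab}\subset\mathcal{A}$. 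The main obstacle I expect is the symbol estimate in (I): propagating the two distinct weights $\langle\xi;\eta\rangle$ and $\langle\eta\rangle$ through the convolution while distributing an arbitrary number of frequency derivatives onto the two symbols and the cutoff, and verifying integrability of every resulting term; once this bigraded bookkeeping is in place, part~(II) is only a conic-decomposition argument and the algebra statements are formal consequences together with Lemmas~\ref{theorem:lemma32} and \ref{theorem:gu}.
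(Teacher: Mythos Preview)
Your overall strategy---reduce to local oscillatory integrals and a symbol-convolution estimate---is exactly the paper's. The organisation of (II) via a conic partition $\chi_1+\chi_2$ in the output frequency also matches the paper's cutoff $\Psi_4(\xi,\eta)=\psi(\langle\xi\rangle/\langle\eta\rangle)$. Two points, however, are genuine gaps rather than mere bookkeeping.

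\textbf{Part (I): one cutoff is not enough.} You localise only in $\langle\xi-\xi';\eta-\eta'\rangle/\langle\xi;\eta\rangle$. This correctly delivers the weight $\langle\xi;\eta\rangle^{-d/2-1}$ and the $\partial_\xi^\alpha$-gain, but it does \emph{not} control the second weight $\langle\eta\rangle$ under $\partial_\eta^\beta$. Concretely, on your region $\langle\xi';\eta'\rangle\sim\langle\xi;\eta\rangle$ you move $\partial_\eta^\beta$ onto $b$ and gain $\langle\eta'\rangle^{-\beta}$; but nothing forces $\langle\eta'\rangle\sim\langle\eta\rangle$ there, and the residual $\eta'$-convolution $\int\langle\eta-\eta'\rangle^{-d-1}\langle\eta'\rangle^{-d/2-1-\beta}\,d\eta'$ only yields $\langle\eta\rangle^{-\min(d+1,\,d/2+1+\beta)}$, which is too weak once $\beta>d/2$. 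The paper resolves this by introducing a \emph{second} cutoff $\Psi_3(\eta,\tau)=\psi(\langle\eta-\tau\rangle/\langle\eta\rangle)$ and splitting into four regions $F_2,G_2,H_2,I_2$; on each, one of $\langle\eta-\tau\rangle\gtrsim\langle\eta\rangle$ or $\langle\tau\rangle\sim\langle\eta\rangle$ holds, and the $\partial_\eta^\beta$-gain lands where it can be converted to $\langle\eta\rangle^{-\beta}$. You flagged precisely this as the main obstacle---it is, and the fix is the second cutoff.

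\textbf{Part (II): the ``cancellation'' argument is not valid.} Your basic bound $|I|\le C\langle p\rangle^{-d/2-1}\langle q\rangle^{-d/2-1}$ is correct and agrees with the paper's equation for $c_3$. But the claimed mechanism for the derivative gains---``$\int_{\mathbb{R}}(\partial_1 a)(p-\theta,\eta)\,d\theta=0$''---does not apply: the $\theta$-integral is against $b(q-\eta,\theta)$, not against $1$, so no cancellation occurs. Naively estimating $\partial_p I$ gives only $\langle p\rangle^{-d/2-1}$ (the minimum exponent in the convolution), not $\langle p\rangle^{-d/2-2}$. The paper obtains the product-type estimate $\partial_\xi^\alpha\partial_\eta^\beta c_3=\mathcal{O}(\langle\xi\rangle^{-d/2-1-\alpha}\langle\eta\rangle^{-d/2-1-\beta})$ by the \emph{same} cutoff scheme as in (I), now with cutoffs separately in $\xi$ and $\eta$ (so that on each piece either $\langle\xi-\zeta\rangle\gtrsim\langle\xi\rangle$ or $\langle\zeta\rangle\sim\langle\xi\rangle$, and likewise in $\eta$), and only afterwards applies $\Psi_4$ (your $\chi_1,\chi_2$) to land in the two product-type classes.
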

In what follows, let $\psi(t)$ be the same as in the proof of Lemma~\ref{theorem:lemma32}. 
Firstly we prove (I) of Lemma~\ref{theorem:final}. 
\begin{proof}[{\bf Proof of Lemma~\ref{theorem:final} (I)}]
Suppose that 
$$
u,v 
\in 
I^{-(d+1)/2-N(d,n)/4, -(d+1)/2}\bigl(N^\ast{S_{jk}},N^\ast{S_j}\bigr). 
$$
Fix arbitrary $p_0 \in S_{jk}$, and choose appropriate local coordinates 
$(x,y,z)\in\mathbb{R}\times\mathbb{R}\times\mathbb{R}^{N(d,n)-2}$ such that 
$\bigl(x(p_0),y(p_0),z(p_0)\bigr)=0$, 
and 
\begin{align*}
  N^\ast{S}_j
& =
  \{(0,y,z;\xi,0,0) : \xi,y\in\mathbb{R}, z\in\mathbb{R}^{N(d,n)-2}\}, 
\\
  N^\ast{S}_k
& =
  \{(x,0,z;0,\eta,0) : x,\eta\in\mathbb{R}, z\in\mathbb{R}^{N(d,n)-2}\}, 
\\
  N^\ast{S}_{jk}
& =
  \{(0,0,z;\xi,\eta,0) : \xi,\eta\in\mathbb{R}, z\in\mathbb{R}^{N(d,n)-2}\}
\end{align*}
near $(x,y,z)=(0,0,0)$. 
Then there exist symbols 
$$
a(z,\xi,\eta), 
b(z,\xi,\eta) 
\in 
S^{-(d+2)/2,-(d+2)/2}\bigl(\mathbb{R}^{N(d,n)-2}\times(\mathbb{R}\setminus\{0\})\times\mathbb{R}\bigr)
$$
such that $a(z,\xi,\eta)$ and $b(z,\xi,\eta)$ are compactly supported in $z$ near $z=0$, 
and $u$ and $v$ are represented by oscillatory integrals
\begin{align*}
  u(x,y,z)
& =
  \iint_{\mathbb{R}\times\mathbb{R}}
  e^{i(x\xi+y\eta)}
  a(z,\xi,\eta)
  d\xi
  d\eta,
\\
  v(x,y,z)
& =
  \iint_{\mathbb{R}\times\mathbb{R}}
  e^{i(x\xi+y\eta)}
  b(z,\xi,\eta)
  d\xi
  d\eta
\end{align*}
near $(x,y,z)=(0,0,0)$. Then we have 
\begin{align*}
  (uv)(x,y,z)
& =
  \iint_{\mathbb{R}\times\mathbb{R}} 
  e^{i(x\xi+y\eta)}
  c_2(z,\xi,\eta)
  d\xi
  d\eta,
\\
  c_2(z,\xi,\eta)
& =
  \iint_{\mathbb{R}\times\mathbb{R}}
  a(z,\xi-\zeta,\eta-\tau)
  b(z,\zeta,\tau)
  d\zeta
  d\tau
\end{align*}
near $(x,y,z)=(0,0,0)$. 
Note that $c_2(z,\xi,\eta)$ is compactly supported in $z$ near $z=0$. 
It suffices to show that 
$$
c_2(z,\xi,\eta)
\in 
S^{-(d+2)/2,-(d+2)/2}\bigl(\mathbb{R}^{N(d,n)-2}\times(\mathbb{R}\setminus\{0\})\times\mathbb{R}\bigr). 
$$
For this purpose we introduce two cut-off functions defined by $\psi(t)$: 
\begin{align*}
  \Psi_2(\xi,\eta,\zeta,\tau)
& =
  \psi\left(\frac{\langle\xi-\zeta;\eta-\tau\rangle}{\langle\xi;\eta\rangle}\right), 
\\
  \Psi_3(\eta,\tau)
& =
  \psi\left(\frac{\langle\eta-\tau\rangle}{\langle\eta\rangle}\right).  
\end{align*}
In the same way as $\Psi_1$ in the Proof of Lemma~\ref{theorem:lemma32}, 
we can deduce the properties of $\Psi_2$ and $\Psi_3$ as follows. 
\begin{itemize}
\item[(i)] 
If $\Psi_2(\xi,\eta,\zeta,\tau)>0$, then $C_1^{-1}\langle\xi;\eta\rangle\leqq\langle\zeta;\tau\rangle\leqq{C_1}\langle\xi;\eta\rangle$ and 
$$
\partial_\zeta^\alpha\partial_\tau^\beta\Psi_2(\xi,\eta,\zeta,\tau)=\mathcal{O}(\langle\xi;\eta\rangle^{-(\alpha+\beta)})
$$ 
for all non-negative integers $\alpha$ and $\beta$. 
\item[(ii)] 
$1-\Psi_2(\xi,\eta,\zeta,\tau)>0$ is equivalent to 
$\langle\xi-\zeta;\eta-\tau\rangle>\langle\xi;\eta\rangle/2$. 
\item[(iii)] 
If $\Psi_3(\eta,\tau)>0$, then $C_1^{-1}\langle\eta\rangle\leqq\langle\tau\rangle\leqq{C_1}\langle\eta\rangle$ and $\partial_\tau^\beta\Psi_3(\eta,\tau)=\mathcal{O}(\langle\eta\rangle^{-\beta})$ for any non-negative integer $\beta$. 
\item[(iv)] 
$1-\Psi_3(\eta,\tau)>0$ is equivalent to 
$\langle\eta-\tau\rangle>\langle\eta\rangle/2$. 
\end{itemize}
We make use of 
$$
1=\Psi_2\Psi_3+\Psi_2(1-\Psi_3)+(1-\Psi_2)\Psi_3+(1-\Psi_2)(1-\Psi_3),
$$
$$
\frac{\partial}{\partial\xi}(\xi-\zeta)
=
-\frac{\partial}{\partial\zeta}(\xi-\zeta),
\quad
\frac{\partial}{\partial\eta}(\eta-\tau)
=
-\frac{\partial}{\partial\tau}(\eta-\tau), 
$$
and integration by parts. We split 
$\partial_z^\gamma\partial_\xi^\alpha\partial_\eta^\beta c_2(z,\xi,\eta)$ 
into four parts:
\begin{align}
  \partial_z^\gamma\partial_\xi^\alpha\partial_\eta^\beta c_2(z,\xi,\eta)
& =
  \sum_{\gamma_1+\gamma_2=\gamma}
  \frac{\gamma!}{\gamma_1!\gamma_2!}
  \iint_{\mathbb{R}\times\mathbb{R}}
  \partial_z^{\gamma_1}\partial_\xi^\alpha\partial_\eta^\beta a(z,\xi-\zeta,\eta-\tau)
\nonumber
\\
& \qquad\qquad\qquad\qquad \times
  \partial_z^{\gamma_2} b(z,\zeta,\eta) 
  d\zeta
  d\tau
\nonumber
\\
& = F_2+G_2+H_2+I_2,
\label{equation:goal1}
\\
  F_2
& =
  \sum_{\gamma_1+\gamma_2=\gamma}
  \frac{\gamma!}{\gamma_1!\gamma_2!}
  \iint_{\mathbb{R}\times\mathbb{R}}
  \partial_z^{\gamma_1}(-\partial_\zeta)^\alpha(-\partial_\tau)^\beta a(z,\xi-\zeta,\eta-\tau)
\nonumber
\\
& \qquad\qquad\qquad\qquad\times
  \Psi_2\Psi_3\partial_z^{\gamma_2} b(z,\zeta,\eta) 
  d\zeta
  d\tau,
\nonumber
\\
  G_2
& =
  \sum_{\gamma_1+\gamma_2=\gamma}
  \frac{\gamma!}{\gamma_1!\gamma_2!}
  \iint_{\mathbb{R}\times\mathbb{R}}
  \partial_z^{\gamma_1}(-\partial_\zeta)^\alpha\partial_\eta^\beta a(z,\xi-\zeta,\eta-\tau)
\nonumber
\\
& \qquad\qquad\qquad\qquad\times
  \Psi_2(1-\Psi_3)\partial_z^{\gamma_2} b(z,\zeta,\eta) 
  d\zeta
  d\tau,
\nonumber
\\
  H_2
& =
  \sum_{\gamma_1+\gamma_2=\gamma}
  \frac{\gamma!}{\gamma_1!\gamma_2!}
  \iint_{\mathbb{R}\times\mathbb{R}}
  \partial_z^{\gamma_1}\partial_\xi^\alpha(-\partial_\tau)^\beta a(z,\xi-\zeta,\eta-\tau)
\nonumber
\\
& \qquad\qquad\qquad\qquad\times
  (1-\Psi_2)\Psi_3\partial_z^{\gamma_2} b(z,\zeta,\eta) 
  d\zeta
  d\tau,
\nonumber
\\
  I_2
& =
  \sum_{\gamma_1+\gamma_2=\gamma}
  \frac{\gamma!}{\gamma_1!\gamma_2!}
  \iint_{\mathbb{R}\times\mathbb{R}}
  \partial_z^{\gamma_1}\partial_\xi^\alpha\partial_\eta^\beta a(z,\xi-\zeta,\eta-\tau)
\nonumber
\\
& \qquad\qquad\qquad\qquad\times
  (1-\Psi_2)(1-\Psi_3)\partial_z^{\gamma_2} b(z,\zeta,\eta) 
  d\zeta
  d\tau.
\nonumber
\end{align}
The region of the integration $F_2$ satisfies 
$$
C_1^{-1}\langle\xi;\eta\rangle\leqq\langle\zeta;\tau\rangle\leqq{C_1}\langle\xi;\eta\rangle,
\quad
C_1^{-1}\langle\eta\rangle\leqq\langle\tau\rangle\leqq{C_1}\langle\eta\rangle. 
$$
We deduce that 
\begin{align}
  F_2
& =
  \sum_{\alpha_2+\alpha_1=\alpha}
  \frac{\alpha!}{\alpha_2!\alpha_1!}
  \sum_{\beta_2+\beta_3+\beta_1=\beta}
  \frac{\beta!}{\beta_2!\beta_3!\beta_1!}
  \sum_{\gamma_1+\gamma_2=\gamma}
  \frac{\gamma!}{\gamma_1!\gamma_2!}
\nonumber
\\
& \times
  \iint_{\substack{C_1^{-1}\langle\xi;\eta\rangle\leqq\langle\zeta;\tau\rangle\leqq{C_1}\langle\xi;\eta\rangle \\ C_1^{-1}\langle\eta\rangle\leqq\langle\tau\rangle\leqq{C_1}\langle\eta\rangle}}
  \partial_z^{\gamma_1}a(z,\xi-\zeta,\eta-\tau)
\nonumber
\\
& \times
  \partial_\zeta^{\alpha_2}\partial_\tau^{\beta_2}\Psi_2
  \cdot
  \partial_\tau^{\beta_3}\Psi_3
  \cdot
  \partial_z^{\gamma_2}\partial_\zeta^{\alpha_1}\partial_\tau^{\beta_1} b(z,\zeta,\eta) 
  d\zeta
  d\tau
\nonumber
\\
& =
  \sum_{\alpha_2+\alpha_1=\alpha}
  \sum_{\beta_2+\beta_3+\beta_1=\beta}
  \iint_{\substack{C_1^{-1}\langle\xi;\eta\rangle\leqq\langle\zeta;\tau\rangle\leqq{C_1}\langle\xi;\eta\rangle \\ C_1^{-1}\langle\eta\rangle\leqq\langle\tau\rangle\leqq{C_1}\langle\eta\rangle}}
\nonumber
\\
& \times
  \mathcal{O}(\langle\xi-\zeta;\eta-\tau\rangle^{-(d+2)/2}\langle\eta-\tau\rangle^{-(d+2)/2} \langle\xi;\eta\rangle^{-\alpha_2-\beta_2} 
\nonumber
\\
& \times
  \langle\eta\rangle^{-\beta_3} \langle\zeta;\tau\rangle^{-(d+2)/2-\alpha_1}\langle\tau\rangle^{-(d+2)/2-\beta_1})
  d\zeta
  d\tau
\nonumber
\\
&  =
  \iint_{\mathbb{R}\times\mathbb{R}}
  \mathcal{O}(\langle\xi-\zeta\rangle^{-(d+2)/2}\langle\eta-\tau\rangle^{-(d+2)/2} \langle\xi;\eta\rangle^{-(d+2)/2-\alpha}\langle\eta\rangle^{-(d+2)/2-\beta})
  d\zeta
  d\tau
\nonumber
\\
& =
  \mathcal{O}(\langle\xi;\eta\rangle^{-(d+2)/2-\alpha}\langle\eta\rangle^{-(d+2)/2-\beta}). 
\label{equation:part11}
\end{align}
since $(d+2)/2=1+d/2>1$ guarantees that 
$\langle\xi-\zeta\rangle^{-(d+2)/2}$ is integrable in $\zeta$ on $\mathbb{R}$, and 
$\langle\eta-\tau\rangle^{-(d+2)/2}$ is integrable in $\tau$ on $\mathbb{R}$. 
\par
The region of the integration $G_2$ satisfies
$$
C_1^{-1}\langle\xi;\eta\rangle\leqq\langle\zeta;\tau\rangle\leqq{C_1}\langle\xi;\eta\rangle,
\quad 
\langle\eta\rangle/2\leqq\langle\eta-\tau\rangle.
$$
We deduce that 
\begin{align}
  G_2
& =
  \sum_{\alpha_2+\alpha_1=\alpha}
  \frac{\alpha!}{\alpha_2!\alpha_1!}
  \sum_{\gamma_1+\gamma_2=\gamma}
  \frac{\gamma!}{\gamma_1!\gamma_2!}
\nonumber
\\
& \times
  \iint_{\substack{C_1^{-1}\langle\xi;\eta\rangle\leqq\langle\zeta;\tau\rangle\leqq{C_1}\langle\xi;\eta\rangle \\ \langle\eta\rangle/2\leqq\langle\eta-\tau\rangle}}
  \partial_z^{\gamma_1}\partial_\eta^\beta a(z,\xi-\zeta,\eta-\tau)
\nonumber
\\
& \times
  \partial_\zeta^{\alpha_2}\Psi_2
  \cdot
  (1-\Psi_3)
  \cdot
  \partial_\zeta^{\alpha_1}\partial_z^{\gamma_2} b(z,\zeta,\eta) 
  d\zeta
  d\tau
\nonumber
\\
& =
  \sum_{\alpha_2+\alpha_1=\alpha}
  \iint_{\substack{C_1^{-1}\langle\xi;\eta\rangle\leqq\langle\zeta;\tau\rangle\leqq{C_1}\langle\xi;\eta\rangle \\ \langle\eta\rangle/2\leqq\langle\eta-\tau\rangle}}
\nonumber
\\
& \times
  \mathcal{O}(\langle\xi-\zeta;\eta-\tau\rangle^{-(d+2)/2}\langle\eta-\tau\rangle^{-(d+2)/2-\beta} 
\nonumber
\\
& \times
  \langle\xi;\eta\rangle^{-\alpha_2} \langle\zeta;\tau\rangle^{-(d+2)/2-\alpha_1}\langle\tau\rangle^{-(d+2)/2})
  d\zeta
  d\tau
\nonumber
\\
& =
  \iint_{\mathbb{R}\times\mathbb{R}}
  \mathcal{O}(\langle\xi-\zeta\rangle^{-(d+2)/2}\langle\eta\rangle^{-(d+2)/2-\beta} \langle\xi;\eta\rangle^{-(d+2)/2-\alpha}\langle\tau\rangle^{-(d+2)/2})
  d\zeta
  d\tau
\nonumber
\\
& =
  \mathcal{O}(\langle\xi;\eta\rangle^{-(d+2)/2-\alpha}\langle\eta\rangle^{-(d+2)/2-\beta})
\label{equation:part12}
\end{align}
since $(d+2)/2=1+d/2>1$ guarantees that 
$\langle\xi-\zeta\rangle^{-(d+2)/2}$ is integrable in $\zeta$ on $\mathbb{R}$, and 
$\langle\tau\rangle^{-(d+2)/2}$ is integrable in $\tau$ on $\mathbb{R}$. 
\par
The region of the integration $H_2$ satisfies 
$$
\langle\xi;\eta\rangle/2\leqq\langle\xi-\zeta;\eta-\tau\rangle, 
\quad
C_1^{-1}\langle\eta\rangle\leqq\langle\tau\rangle\leqq{C_1}\langle\eta\rangle.
$$
We deduce that
\begin{align}
  H_2
& =
  \sum_{\beta_2+\beta_3+\beta_1=\beta}
  \frac{\beta!}{\beta_2!\beta_3!\beta_1!}
  \sum_{\gamma_1+\gamma_2=\gamma}
  \frac{\gamma!}{\gamma_1!\gamma_2!}
\nonumber
\\
& \times
  \iint_{\substack{\langle\xi;\eta\rangle/2\leqq\langle\xi-\zeta;\eta-\tau\rangle \\ C_1^{-1}\langle\eta\rangle\leqq\langle\tau\rangle\leqq{C_1}\langle\eta\rangle}}
  \partial_z^{\gamma_1}\partial_\xi^\alpha a(z,\xi-\zeta,\eta-\tau)
\nonumber
\\
& \times
  \partial_\tau^{\beta_2}(1-\Psi_2)
  \cdot
  \partial_\tau^{\beta_3}\Psi_3
  \cdot
  \partial_z^{\gamma_2}\partial_\tau^{\beta_1} b(z,\zeta,\eta) 
  d\zeta
  d\tau
\nonumber
\\
& =
  \sum_{\beta_2+\beta_3+\beta_1=\beta}
  \iint_{\substack{\langle\xi;\eta\rangle/2\leqq\langle\xi-\zeta;\eta-\tau\rangle \\ C_1^{-1}\langle\eta\rangle\leqq\langle\tau\rangle\leqq{C_1}\langle\eta\rangle}}
\nonumber
\\
& \times
  \mathcal{O}(\langle\xi-\zeta;\eta-\tau\rangle^{-(d+2)/2-\alpha}\langle\eta-\tau\rangle^{-(d+2)/2} 
\nonumber
\\
& \times
  \langle\xi;\eta\rangle^{-\beta_2} \langle\eta\rangle^{-\beta_3} \langle\zeta;\tau\rangle^{-(d+2)/2}\langle\tau\rangle^{-(d+2)/2-\beta_1})
  d\zeta
  d\tau
\nonumber
\\
& =
  \iint_{\mathbb{R}\times\mathbb{R}}
  \mathcal{O}(\langle\xi;\eta\rangle^{-(d+2)/2-\alpha}\langle\eta-\tau\rangle^{-(d+2)/2} \langle\zeta\rangle^{-(d+2)/2}\langle\eta\rangle^{-(d+2)/2-\beta})
  d\zeta
  d\tau
\nonumber
\\
& =
  \mathcal{O}(\langle\xi;\eta\rangle^{-(d+2)/2-\alpha}\langle\eta\rangle^{-(d+2)/2-\beta})
\label{equation:part13}
\end{align}
since $(d+2)/2=1+d/2>1$ guarantees that 
$\langle\zeta\rangle^{-(d+2)/2}$ is integrable in $\zeta$ on $\mathbb{R}$, and 
$\langle\eta-\tau\rangle^{-(d+2)/2}$ is integrable in $\tau$ on $\mathbb{R}$. 
\par
The region of the integration $I_2$ satisfies
$$
\langle\xi;\eta\rangle/2\leqq\langle\xi-\zeta;\eta-\tau\rangle, 
\quad
\langle\eta\rangle/2\leqq\langle\eta-\tau\rangle. 
$$
We deduce that 
\begin{align}
  I_2
& =
  \sum_{\gamma_1+\gamma_2=\gamma}
  \frac{\gamma!}{\gamma_1!\gamma_2!}
  \iint_{\substack{\langle\xi;\eta\rangle/2\leqq\langle\xi-\zeta;\eta-\tau\rangle \\ \langle\eta\rangle/2\leqq\langle\eta-\tau\rangle}}
  \partial_z^{\gamma_1}\partial_\xi^\alpha\partial_\eta^\beta a(z,\xi-\zeta,\eta-\tau)
\nonumber
\\
& \times
  (1-\Psi_2)\cdot(1-\Psi_3)\cdot\partial_z^{\gamma_2} b(z,\zeta,\eta) 
  d\zeta
  d\tau
\nonumber
\\
& =
  \iint_{\substack{\langle\xi;\eta\rangle/2\leqq\langle\xi-\zeta;\eta-\tau\rangle \\ \langle\eta\rangle/2\leqq\langle\eta-\tau\rangle}}
  \mathcal{O}(\langle\xi-\zeta;\eta-\tau\rangle^{-(d+2)/2-\alpha}\langle\eta-\tau\rangle^{-(d+2)/2-\beta} 
\nonumber
\\
& \times
  \langle\zeta;\tau\rangle^{-(d+2)/2}\langle\tau\rangle^{-(d+2)/2})
  d\zeta
  d\tau
\nonumber
\\
& =
  \iint_{\mathbb{R}\times\mathbb{R}}
  \mathcal{O}(\langle\xi;\eta\rangle^{-(d+2)/2-\alpha}\langle\eta\rangle^{-(d+2)/2-\beta} \langle\zeta\rangle^{-(d+2)/2}\langle\tau\rangle^{-(d+2)/2})
  d\zeta
  d\tau
\nonumber
\\
& =
  \mathcal{O}(\langle\xi;\eta\rangle^{-(d+2)/2-\alpha}\langle\eta\rangle^{-(d+2)/2-\beta})
\label{equation:part14}
\end{align}
since $(d+2)/2=1+d/2>1$ guarantees that 
$\langle\zeta\rangle^{-(d+2)/2}$ is integrable in $\zeta$ on $\mathbb{R}$, and 
$\langle\tau\rangle^{-(d+2)/2}$ is integrable in $\tau$ on $\mathbb{R}$. 
\par 
Substitute 
\eqref{equation:part11}, 
\eqref{equation:part12}, 
\eqref{equation:part13} 
and 
\eqref{equation:part14} 
into 
\eqref{equation:goal1}. 
We have 
$$
\partial_z^\gamma\partial_\xi^\alpha\partial_\eta^\beta c_2(z,\xi,\eta)
=
\mathcal{O}(\langle\xi;\eta\rangle^{-(d+2/2)-\alpha}\langle\eta\rangle^{-(d+2)/2-\beta}).
$$
This completes the proof.  
\end{proof}
Finally we prove (II) of Lemma~\ref{theorem:final}. 
\begin{proof}[{\bf Proof of Lemma~\ref{theorem:final} (II)}]
Suppose that 
$$
u 
\in 
I^{-(d+1)/2-N(d,n)/4, -(d+1)/2}\bigl(N^\ast{S_{jk}},N^\ast{S_j}\bigr), 
$$
$$
v
\in 
I^{-(d+1)/2-N(d,n)/4, -(d+1)/2}\bigl(N^\ast{S_{jk}},N^\ast{S_k}\bigr).  
$$
In the same way as the proof of (I), fix arbitrary $p_0 \in S_{jk}$, 
and choose appropriate local coordinates 
$(x,y,z)\in\mathbb{R}\times\mathbb{R}\times\mathbb{R}^{N(d,n)-2}$ such that 
$\bigl(x(p_0),y(p_0),z(p_0)\bigr)=0$, 
and 
\begin{align*}
  N^\ast{S}_j
& =
  \{(0,y,z;\xi,0,0) : \xi,y\in\mathbb{R}, z\in\mathbb{R}^{N(d,n)-2}\}, 
\\
  N^\ast{S}_k
& =
  \{(x,0,z;0,\eta,0) : x,\eta\in\mathbb{R}, z\in\mathbb{R}^{N(d,n)-2}\}, 
\\
  N^\ast{S}_{jk}
& =
  \{(0,0,z;\xi,\eta,0) : \xi,\eta\in\mathbb{R}, z\in\mathbb{R}^{N(d,n)-2}\}
\end{align*}
near $(x,y,z)=(0,0,0)$. 
Then there exist symbols 
$$
a(z,\xi,\eta), 
b(z,\eta,\xi) 
\in 
S^{-(d+2)/2,-(d+2)/2}\bigl(\mathbb{R}^{N(d,n)-2}\times(\mathbb{R}\setminus\{0\})\times\mathbb{R}\bigr)
$$
such that $a(z,\xi,\eta)$ and $b(z,\eta,\xi)$ are compactly supported in $z$ near $z=0$, 
and $u$ and $v$ are represented by oscillatory integrals
\begin{align*}
  u(x,y,z)
& =
  \iint_{\mathbb{R}\times\mathbb{R}}
  e^{i(x\xi+y\eta)}
  a(z,\xi,\eta)
  d\xi
  d\eta,
\\
  v(x,y,z)
& =
  \iint_{\mathbb{R}\times\mathbb{R}}
  e^{i(x\xi+y\eta)}
  b(z,\eta,\xi)
  d\xi
  d\eta
\end{align*}
near $(x,y,z)=(0,0,0)$. 
We need to take care of the order of the variables of $b(z,\eta,\xi)$. 
Then we have 
\begin{align*}
  (uv)(x,y,z)
& =
  \iint_{\mathbb{R}\times\mathbb{R}} 
  e^{i(x\xi+y\eta)}
  c_3(z,\xi,\eta)
  d\xi
  d\eta,
\\
  c_3(z,\xi,\eta)
& =
  \iint_{\mathbb{R}\times\mathbb{R}}
  a(z,\xi-\zeta,\eta-\tau)
  b(z,\tau,\zeta)
  d\zeta
  d\tau
\end{align*}
near $(x,y,z)=(0,0,0)$. 
Note that $c_3(z,\xi,\eta)$ is compactly supported in $z$ near $z=0$. 
It suffices to show that 
\begin{align*}
  c_3(z,\xi,\eta)
& \in 
  S^{-(d+2)/2,-(d+2)/2}\bigl(\mathbb{R}_z^{N(d,n)-2}\times(\mathbb{R_\xi}\setminus\{0\})\times\mathbb{R_\eta}\bigr)
\\
& +
  S^{-(d+2)/2,-(d+2)/2}\bigl(\mathbb{R}_z^{N(d,n)-2}\times(\mathbb{R}_\eta\setminus\{0\})\times\mathbb{R}_\xi\bigr).
\end{align*}
In the same way as the proof of (I), we can obtain
\begin{equation}
\partial_z^\gamma
\partial_\xi^\alpha
\partial_\eta^\beta
c_3(z,\xi,\eta)
=
\mathcal{O}(\langle\xi\rangle^{-(d+2)/2-\alpha}\langle\eta\rangle^{-(d+2)/2-\beta}). 
\label{equation:part20}
\end{equation}
To complete the proof we split $c_3(z,\xi,\eta)$ into two parts smoothly. 
For this purpose we introduce two cut-off functions defined by $\psi(t)$. 
Set $\Psi_4(\xi,\eta):=\psi(\langle\xi\rangle/\langle\eta\rangle)$. 
We obtain the following properties. 
\begin{itemize}
\item[(i)] 
If $\Psi_4(\xi,\eta)>0$, 
then $\langle\xi;\eta\rangle \leqq C_2\langle\eta\rangle$ with some $C_2>1$ 
since $\langle\xi\rangle \leqq 3\langle\eta\rangle/4$.  
\item[(ii)] 
If $1-\Psi_4(\xi,\eta)>0$, 
then $\langle\xi;\eta\rangle \leqq C_2\langle\xi\rangle$ with some $C_2>1$ 
since $\langle\xi\rangle \geqq \langle\eta\rangle/2$.
\item[(iii)] 
We have that 
$$
\partial_\xi^\alpha\partial_\eta^\beta\Psi_4(\xi,\eta), 
\partial_\xi^\alpha\partial_\eta^\beta\bigl(1-\Psi_4(\xi,\eta)\bigr)
=
\mathcal{O}(\langle\xi;\eta\rangle^{-\alpha-\beta})
$$ 
for all the non-negative integers $\alpha$ and $\beta$ since $0\leqq\Psi_4(\xi,\eta)\leqq1$ for all $(\xi,\eta)\in\mathbb{R}^2$. 
Moreover 
$\langle\xi;\eta\rangle \leqq C_2\langle\eta\rangle$ 
and  
$\langle\xi;\eta\rangle \leqq C_2\langle\xi\rangle$ 
hold in 
$\operatorname{supp}\psi^\prime(\langle\xi\rangle/\langle\eta\rangle)$.    
\end{itemize}
Set 
$c_{3,1}(z,\xi,\eta):=\bigl(1-\Psi_4(\xi,\eta)\bigr)c(z,\xi,\eta)$ 
and 
$c_{3,2}(z,\xi,\eta):=\Psi_4(\xi,\eta)c(z,\xi,\eta)$. 
Combining \eqref{equation:part20} and the properties (i), (ii) and (iii) above, 
we shall show 
\begin{align*}
  c_{3,1}(z,\xi,\eta)
& \in 
  S^{-(d+2)/2,-(d+2)/2}\bigl(\mathbb{R}_z^{N(d,n)-2}\times(\mathbb{R_\xi}\setminus\{0\})\times\mathbb{R_\eta}\bigr)
\\
  c_{3,2}(z,\xi,\eta)
& \in
  S^{-(d+2)/2,-(d+2)/2}\bigl(\mathbb{R}_z^{N(d,n)-2}\times(\mathbb{R}_\eta\setminus\{0\})\times\mathbb{R}_\xi\bigr).
\end{align*}
Since $\operatorname{supp}c_{3,1}(z,\cdot,\cdot)$ is contained in $\{\langle\xi;\eta\rangle \leqq C_2\langle\xi\rangle\}$, we deduce that 
\begin{align*}
  \partial_z\partial_\xi^\alpha\partial_\eta^\beta
  c_{3,1}(z,\xi,\eta)
& =
  \sum_{\alpha_1+\alpha_2=\alpha}
  \sum_{\beta_1+\beta_2=\beta}
  \frac{\alpha!}{\alpha_1!\alpha_2!}
  \frac{\beta!}{\beta_1!\beta_2!}
\\
& \times
  \partial_\xi^{\alpha_1}\partial_\eta^{\beta_1}
  \bigl(1-\Psi_4(\xi,\eta)\bigr)
  \cdot
  \partial_z\partial_\xi^{\alpha_2}\partial_\eta^{\beta_2}
  c_3(z,\xi,\eta)
\\
& =
  \sum_{\alpha_1+\alpha_2=\alpha}
  \sum_{\beta_1+\beta_2=\beta}
\\
& \times
  \mathcal{O}
  (\langle\xi;\eta\rangle^{-\alpha_1-\beta_1}\langle\xi\rangle^{-(d+2)/2-\alpha_2}\langle\eta\rangle^{-(d+2)/2-\beta_2})
\\
& =
  \mathcal{O}
  (\langle\xi;\eta\rangle^{-(d+2)/2-\alpha}\langle\eta\rangle^{-(d+2)/2-\beta}). 
\end{align*}
Since $\operatorname{supp}c_{3,2}(z,\cdot,\cdot)$ is contained in $\{\langle\xi;\eta\rangle \leqq C_2\langle\eta\rangle\}$, we deduce that 
\begin{align*}
  \partial_z\partial_\xi^\alpha\partial_\eta^\beta
  c_{3,2}(z,\xi,\eta)
& =
  \sum_{\alpha_1+\alpha_2=\alpha}
  \sum_{\beta_1+\beta_2=\beta}
  \frac{\alpha!}{\alpha_1!\alpha_2!}
  \frac{\beta!}{\beta_1!\beta_2!}
\\
& \times
  \partial_\xi^{\alpha_1}\partial_\eta^{\beta_1}\Psi_4(\xi,\eta)
  \cdot
  \partial_z\partial_\xi^{\alpha_2}\partial_\eta^{\beta_2}
  c_3(z,\xi,\eta)
\\
& =
  \sum_{\alpha_1+\alpha_2=\alpha}
  \sum_{\beta_1+\beta_2=\beta}
\\
& \times
  \mathcal{O}
  (\langle\xi;\eta\rangle^{-\alpha_1-\beta_1}\langle\xi\rangle^{-(d+2)/2-\alpha_2}\langle\eta\rangle^{-(d+2)/2-\beta_2})
\\
& =
  \mathcal{O}
  (\langle\xi;\eta\rangle^{-(d+2)/2-\beta}\langle\xi\rangle^{-(d+2)/2-\alpha}). 
\end{align*}
This completes the proof.   
\end{proof}
%
%
\begin{center}
{\sc Acknowledgments} 
\end{center}
\par
The author would like to thank the referees who carefully read the manuscript and provided many valuable comments and suggestions for improving this manuscript.

\end{document}